\newcommand{\D}{\connection}
\renewcommand{\d}{\mathrm{d}}
\newcommand{\bM}{\bar{M}}
\newcommand{\bg}{\bar{g}}
\newcommand{\bE}{\bar{E}}
\begin{document}

\title
{The range of a connection and a Calabi operator for Lorentzian locally
symmetric spaces}

\thanks{This work was supported by
the Australian Research
Council (Discovery Program DP190102360).}

\author[E.F.~Costanza]{Federico Costanza}
\address{\hskip-\parindent
Center for Theoretical Physics\\
Polish Academy of Sciences\\
Al. Lotnik\'{o}w 32/46, {02-668} Warsaw\\Poland}
\email{efcostanza@gmail.com}
\author[M.G.~Eastwood]{Michael Eastwood}
\address{\hskip-\parindent
School of Computer and Mathematical Sciences\\
University of Adelaide\\
SA 5005\\
Australia}
\email{meastwoo@gmail.com}
\author[T.~Leistner]{Thomas Leistner}
\address{\hskip-\parindent
School of Computer and Mathematical Sciences\\
University of Adelaide\\
SA 5005\\
Australia}
\email{thomas.leistner@adelaide.edu.au}
\author[B.B.~McMillan]{Benjamin McMillan}
\address{\hskip-\parindent
Center for Complex Geometry, Institute for Basic Science,
55, Expo-ro, Yuseong-gu, Daejeon, Korea, 34126
} \email{mcmillanbb@gmail.com}
\subjclass{Primary 53C35, Secondary 53B30, 53B05, 53A20}
\keywords{}

\begin{abstract}
For semi-Rieman\-nian manifolds of constant sectional curvature, the Calabi
operator is a second order linear differential operator that provides local
integrability conditions for the range of the Killing operator.  In this
article, extending earlier results in the Rieman\-nian setting, we identify the
Lorentzian locally symmetric spaces on which the Calabi operator is sufficient
to identify the range of the Killing operator.  Specifically, this is always
the case for indecomposable spaces and we identify precisely those products for
which it fails.  Our method is quite general in that we firstly develop
criteria to be in the range of a connection, viewed as a linear differential
operator.  Then we ascertain how these criteria apply in the case of what we
call the Killing connection.
\end{abstract}

\maketitle

\section{Introduction}

In a previous article~\cite{CostanzaEastwoodLeistner21}, we found local integrability conditions
for the range of the Killing operator
\begin{equation}\label{killing_operator}
X_a\stackrel{{\mathcal{K}}}{\longmapsto}\nabla_{(a}X_{b)}
\end{equation}
on an arbitrary irreducible Rieman\-nian symmetric space as the kernel of a
second order linear differential operator.
Here, the $1$-form $X_a$ is, equivalently, a vector field $X^a$ with its index
lowered $X_a=g_{ab}X^b$ using the metric $g_{ab}$ in the usual way and
$\nabla_a$ is the Levi-Civita connection for~$g_{ab}$.  More generally, we are
employing Penrose's {\em abstract index notation} ~\cite{penroserindler1} for
tensors.  In particular, round/square brackets mean to take the symmetric/skew part of a
tensor and a repeated index denotes the invariant pairing between vectors and
co-vectors (echoing the {\em Einstein summation convention}).
Crucial for our results in~\cite{CostanzaEastwoodLeistner21} is the
{\em Calabi operator}, a linear second order
differential operator ${\mathcal{C}}$ acting on symmetric bilinear forms as
\begin{equation}\label{calabi}h_{ab}\mapsto
\nabla_{(a}\nabla_{c)}h_{bd}-\nabla_{(b}\nabla_{c)}h_{ad}
-\nabla_{(a}\nabla_{d)}h_{bc}
+\nabla_{(b}\nabla_{d)}h_{ac}-R_{ab}{}^e{}_{[c}h_{d]e}
-R_{cd}{}^e{}_{[a}h_{b]e},
\end{equation}
where $R_{ab}{}^c{}_d$ is the Riemann curvature tensor characterised by
$$(\nabla_a\nabla_b-\nabla_b\nabla_a)X^c=R_{ab}{}^c{}_dX^d,\quad \text{for any vector field~$X^c$.}$$
In~\cite{Calabi61}, Calabi showed that if the
Riemannian metric $g_{ab}$ is of constant sectional curvature,
then the range of $\mathcal K$ is equal to the kernel of $\mathcal C$. In \cite{CostanzaEastwoodLeistner21} we generalise this result appropriately to  Riemannian locally symmetric spaces.

This article is a follow-up to~\cite{CostanzaEastwoodLeistner21} with the following two outcomes.
Firstly, we place the machinery of \cite{CostanzaEastwoodLeistner21} in a more general setting so
that, in principle, one can study {\em any\/} overdetermined linear
differential operator in place of the Killing operator.  Secondly, we use this
machinery to extend the results of \cite{CostanzaEastwoodLeistner21} to the Lorentzian case (see
Theorem~\ref{calabi-theo}).

The prototypical overdetermined linear differential operator is a connection
\begin{equation}\label{D}D:E\to\Wedge^1\otimes E\end{equation}
on a smooth vector bundle~$E$ over a smooth manifold $M$ and the general case
can often be captured by a suitable connection.  We shall come back to this
{\em prolongation procedure\/} soon but, firstly, let us make some general
remarks concerning the range of (\ref{D}).  It is well-known (see
e.g.~\cite{MilnorStasheff74}), that any connection on $E$ gives rise to a
natural sequence of operators
\begin{equation}\label{coupled_de_Rham}
E\xrightarrow{\,D\,}\Wedge^1\otimes E\xrightarrow{\,\dconn\,} \Wedge^2\otimes E
\xrightarrow{\,\dconn\,}\cdots\xrightarrow{\,\dconn\,} \Wedge^p\otimes E
\xrightarrow{\,\dconn\,}\Wedge^{p+1}\otimes E\xrightarrow{\,\dconn\,}
\cdots\end{equation}
and that the composition $E\to\Wedge^1\otimes E\to\Wedge^2\otimes E$ is
actually a homomorphism, namely the {\em curvature\/} of $D$, which we shall
denote by $\kappa\in\Hom(E,\Wedge^2\otimes E)$.  It is also well-known that, if
the curvature vanishes, then (\ref{coupled_de_Rham}) is a locally exact
complex.  In fact, in this so-called {\em flat\/} case, we can identify the
range of $D:E\to\Wedge^1\otimes E$ on any open simply-connected subset
$U\subseteq M$:
$$\{D\phi\in\Gamma(U,\Wedge^1\otimes E)\mbox{ for }\phi\in\Gamma(U,E)\}
=\{\psi\in\Gamma(U,\Wedge^1\otimes E)\mbox{ s.t.\ }\dconn \psi=0\}.$$
More generally, if $\Gamma(\Wedge^1\otimes E)\ni\psi=D\phi$, then
$\dconn\psi=\kappa\phi$ so, in particular, it follows that $\dconn\psi$ must be in the
range of $\kappa:E\to\Wedge^2\otimes E$.  This necessary differential condition
on $\psi$ is sometimes locally sufficient.  In other words, assuming that the
range of the homomorphism $\kappa$ is actually a subbundle of $\Wedge^2\otimes
E$ (as we shall suppose throughout this article) the induced {\em complex\/} of
differential operators
\begin{equation}\label{curvature_reduced_complex}
E\xrightarrow{\,D\,}\Wedge^1\otimes E\to\frac{\Wedge^2\otimes E}{\kappa(E)}
\end{equation}
is sometimes locally exact.  Unfortunately, this is not always the case, and in
the first part of this article, we develop criteria for the local exactness
of~(\ref{curvature_reduced_complex}), and present alternative integrability
conditions on the range of $D$ in case that these criteria fail.  In
particular, we formulate an invariantly defined {\em second order\/}
differential operator
$$\begin{array}{ccc}{\mathcal{D}}:\Wedge^1\otimes E
&\longrightarrow&\Delta^2\otimes E\\
&\searrow&\downarrow\\[-4pt]
&\raisebox{7pt}{\makebox[0pt][r]{\scriptsize$\dconn$}}&\Wedge^2\otimes E
\end{array}$$
for an appropriate bundle $\Delta^2$ and so that the composition
$E\xrightarrow{\,D\,}\Wedge^1\otimes E
\xrightarrow{\,{\mathcal{D}}\,} \Delta^2\otimes E$ is
still a {\em homomorphism\/} of vector bundles.  We call this composition the
{\em augmented curvature\/} of $D$ and use it to strengthen the
criterion for the range of $D$ in case that (\ref{curvature_reduced_complex})
fails to be locally exact.

As already indicated, these general considerations regarding the range of a
connection can be applied to determine the local range of other overdetermined
operators.  Implicitly, this was our strategy in \cite{CostanzaEastwoodLeistner21} for the Killing
operator~(\ref{killing_operator}).  Here, we present it more explicitly as
follows.  The particular connection that we employed in~\cite{CostanzaEastwoodLeistner21} was used
by Kostant~\cite{Kostant55} in a similar context.  We call it the {\em Killing
connection\/}, defined on any semi-Rieman\-nian manifold as follows.  Consider
the vector bundle $E\equiv\Wedge^1\oplus\Wedge^2$ and define a connection on
this bundle by
\begin{equation}\label{killing_connection}
E=\begin{array}{c}\Wedge^1\\[-3pt] \oplus\\[-1pt]
\Wedge^2\end{array}\ni\left[\!\begin{array}{c}\sigma_c\\ \mu_{cd}
\end{array}\!\right]\stackrel{D_b}{\longmapsto}
\left[\!\begin{array}{c}\nabla_b\sigma_c-\mu_{bc}\\
\nabla_b\mu_{cd}-R_{cd}{}^e{}_b\sigma_e
\end{array}\!\right]\in\Wedge^1\otimes E.\end{equation}
The curvature of this {\em Killing connection} will be crucial in our approach. By applying $D_a$ to~(\ref{killing_connection}),
\[D_aD_b\left[\!\begin{array}{c}\sigma_c\\ \mu_{cd}
\end{array}\!\right]=
\left[\!\begin{array}{c}\nabla_a\nabla_b\sigma_c-\nabla_a\mu_{bc} -\nabla_b\mu_{ac}+R_{bc}{}^e{}_a\sigma_e\\
\nabla_a\nabla_b\mu_{cd}-\nabla_aR_{cd}{}^e{}_b\sigma_e -R_{cd}{}^e{}_b\nabla_a\sigma_e-R_{cd}{}^e{}_a\left( \nabla_b\sigma_e- \mu_{be}\right)
\end{array}\!\right]
\]
one obtains the curvature $\kappa$ of $D_a$ as
\begin{equation}\label{Killing_curvature}
(D_aD_b-D_bD_a)\left[\!\begin{array}{c}\sigma_c\\ \mu_{cd}
\end{array}\!\right]=\left[\!\begin{array}{c}0\\
2R_{ab}{}^e{}_{[c}\mu_{d]e}+2R_{cd}{}^e{}_{[a}\mu_{b]e}
-(\nabla^eR_{abcd})\sigma_e
\end{array}\!\right],
\end{equation}
using the Bianchi symmetry
$R_{[abc]d}=0$, and Bianchi identity $\nabla_{[a}R_{bc]de}=0$.
Moreover, the sequence \eqref{coupled_de_Rham} fits profitably in the following commutative diagram,
\begin{equation}\label{shiny_new_diagram}\begin{array}{cccccccccccc}
0 & \to & \Wedge^2&\to&
\begin{array}{c}\Wedge^2\\[-3pt]\oplus\\[-2pt]
\Wedge^2\otimes\Wedge^1\end{array}&\to&
\begin{array}{c}\Wedge^2\otimes\Wedge^1\\[-3pt]\oplus\\[-2pt]
\Wedge^3\otimes\Wedge^1\end{array}&\to&
\begin{array}{c}\Wedge^3\otimes\Wedge^1\\[-3pt]\oplus\\[-2pt]
\Wedge^4\otimes\Wedge^1\end{array}&\to&\cdots\\
&& \uparrow&&\uparrow&&\uparrow&&\uparrow\\
&& E&\stackrel{D}{\longrightarrow}&\Wedge^1\otimes E&
\stackrel{\dconn}{\longrightarrow}&\Wedge^2\otimes E&
\stackrel{\dconn}{\longrightarrow}&\Wedge^3\otimes E&
\stackrel{\dconn}{\longrightarrow}&\cdots\\
&& \uparrow&&\uparrow&&\uparrow&&\uparrow\\
&& \Wedge^1&\stackrel{\mathcal{K}}{\longrightarrow}&
\bigodot^2\!\Wedge^1&\stackrel{\mathcal{C}/2}{\longrightarrow}&
\begin{picture}(12,6)
\put(0,-6){\line(1,0){12}}
\put(0,0){\line(1,0){12}}
\put(0,6){\line(1,0){12}}
\put(0,-6){\line(0,1){12}}
\put(6,-6){\line(0,1){12}}
\put(12,-6){\line(0,1){12}}
\end{picture}&\stackrel{\mathcal{B}}{\longrightarrow}&
\begin{picture}(12,6)
\put(0,-12){\line(1,0){6}}
\put(0,-6){\line(1,0){12}}
\put(0,0){\line(1,0){12}}
\put(0,6){\line(1,0){12}}
\put(0,-12){\line(0,1){18}}
\put(6,-12){\line(0,1){18}}
\put(12,-6){\line(0,1){12}}
\end{picture}&\longrightarrow&\cdots,
\end{array}\end{equation}

\medskip\noindent
where $\bigodot$ denotes symmetric tensor product and
\begin{itemize}
\item \begin{picture}(12,12)
\put(0,0){\line(1,0){12}}
\put(0,6){\line(1,0){12}}
\put(0,12){\line(1,0){12}}
\put(0,0){\line(0,1){12}}
\put(6,0){\line(0,1){12}}
\put(12,0){\line(0,1){12}}
\end{picture} denotes the bundle of tensors $\mu_{bcde}=\mu_{[bc][de]}$
such that $\mu_{[bcd]e}=0$,
\item \begin{picture}(12,12)
\put(0,-6){\line(1,0){6}}
\put(0,0){\line(1,0){12}}
\put(0,6){\line(1,0){12}}
\put(0,12){\line(1,0){12}}
\put(0,-6){\line(0,1){18}}
\put(6,-6){\line(0,1){18}}
\put(12,0){\line(0,1){12}}
\end{picture} denotes the bundle of tensors $\mu_{abcde}=\mu_{[abc][de]}$
such that $\mu_{[abcd]e}=0$,
\item \rule{0pt}{14pt}the operator
${\mathcal{C}}:\bigodot^2\!\Wedge^1\to
\begin{picture}(12,12)(0,2)
\put(0,0){\line(1,0){12}}
\put(0,6){\line(1,0){12}}
\put(0,12){\line(1,0){12}}
\put(0,0){\line(0,1){12}}
\put(6,0){\line(0,1){12}}
\put(12,0){\line(0,1){12}}
\end{picture}$ is given by formula~(\ref{calabi}),
\vspace{1mm}
\item and the operator ${\mathcal{B}}$ is given by
$\mu_{bcde}\mapsto\nabla_{[a}\mu_{bc]de}$.
\end{itemize}
Explicit formul{\ae} for the yet undescribed maps of Diagram \eqref{shiny_new_diagram} are given in \S\ref{kill_and_more_kill}.
  It need only be said here that they are natural, and that once defined it is not difficult to show that the top row is exact, while each column is a short exact sequence, and that the diagram commutes.

From this, it is a formal calculation to demonstrate 
the crucial fact that if the curvature \( \kappa \) of the Killing connection has constant rank, then
 the range of ${\mathcal{C}}\circ{\mathcal{K}}$ defines a subbundle of
$\,\begin{picture}(12,12)(0,2)
\put(0,0){\line(1,0){12}}
\put(0,6){\line(1,0){12}}
\put(0,12){\line(1,0){12}}
\put(0,0){\line(0,1){12}}
\put(6,0){\line(0,1){12}}
\put(12,0){\line(0,1){12}}
\end{picture}\,$. Indeed, diagram (\ref{shiny_new_diagram}) and the formula
(\ref{Killing_curvature}) show this subbundle to be the range of the homomorphism
\begin{equation}\label{fancyR}
\mathcal R:E=\begin{array}{c}\Wedge^1\\[-3pt] \oplus\\[-1pt]
\Wedge^2\end{array}\ni\left[\!\begin{array}{c}\sigma_c\\ \mu_{cd}
\end{array}\!\right]\longmapsto
2R_{ab}{}^e{}_{[c}\mu_{d]e}+2R_{cd}{}^e{}_{[a}\mu_{b]}
-(\nabla^eR_{abcd})\sigma_e\in\begin{picture}(12,12)(0,2)
\put(0,0){\line(1,0){12}}
\put(0,6){\line(1,0){12}}
\put(0,12){\line(1,0){12}}
\put(0,0){\line(0,1){12}}
\put(6,0){\line(0,1){12}}
\put(12,0){\line(0,1){12}}
\end{picture}\,.\end{equation}
It follows that, when \( \kappa \) has constant rank, the composition
$$\begin{picture}(12,6)(0,0)
\put(0,0){\line(1,0){12}}
\put(0,6){\line(1,0){12}}
\put(0,0){\line(0,1){6}}
\put(6,0){\line(0,1){6}}
\put(12,0){\line(0,1){6}}
\end{picture}\xrightarrow{\,{\mathcal{C}}\,}
\begin{picture}(12,12)(0,2)
\put(0,0){\line(1,0){12}}
\put(0,6){\line(1,0){12}}
\put(0,12){\line(1,0){12}}
\put(0,0){\line(0,1){12}}
\put(6,0){\line(0,1){12}}
\put(12,0){\line(0,1){12}}
\end{picture}\xrightarrow{\,\phantom{\mathcal{C}}\,}
\overline{\begin{picture}(12,12)(0,2)
\put(0,0){\line(1,0){12}}
\put(0,6){\line(1,0){12}}
\put(0,12){\line(1,0){12}}
\put(0,0){\line(0,1){12}}
\put(6,0){\line(0,1){12}}
\put(12,0){\line(0,1){12}}
\end{picture}}
\equiv
\displaystyle
\begin{picture}(12,12)(0,2)
\put(0,0){\line(1,0){12}}
\put(0,6){\line(1,0){12}}
\put(0,12){\line(1,0){12}}
\put(0,0){\line(0,1){12}}
\put(6,0){\line(0,1){12}}
\put(12,0){\line(0,1){12}}
\end{picture}\,/{{\mathcal{R}}(E)}
\,,$$
is a well defined differential operator  of vector bundles.
We write ${\mathcal{L}}$ for the composition.
%
Based on these observations, in  \S\ref{kill_and_more_kill} we will establish the following.
\begin{theorem}\label{shiny_new_theorem}
  For any semi-Riemannian manifold such that
  the homomorphism $\mathcal R$ in~(\ref{fancyR}) has constant rank,
  the complex of linear differential operators
\begin{equation}\label{key_complex}\begin{picture}(6,6)(0,0)
\put(0,0){\line(1,0){6}}
\put(0,6){\line(1,0){6}}
\put(0,0){\line(0,1){6}}
\put(6,0){\line(0,1){6}}
\end{picture}\xrightarrow{\,{\mathcal{K}}\,}
\begin{picture}(12,6)(0,0)
\put(0,0){\line(1,0){12}}
\put(0,6){\line(1,0){12}}
\put(0,0){\line(0,1){6}}
\put(6,0){\line(0,1){6}}
\put(12,0){\line(0,1){6}}
\end{picture}\xrightarrow{\,{\mathcal{L}}\,}
\overline{\begin{picture}(12,12)(0,2)
\put(0,0){\line(1,0){12}}
\put(0,6){\line(1,0){12}}
\put(0,12){\line(1,0){12}}
\put(0,0){\line(0,1){12}}
\put(6,0){\line(0,1){12}}
\put(12,0){\line(0,1){12}}
\end{picture}}\,,\end{equation}
is (locally) exact if and only if the same is true of~
(\ref{curvature_reduced_complex}) for the Killing connection.
\end{theorem}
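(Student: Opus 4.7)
My approach is to diagram-chase in~\eqref{shiny_new_diagram}, exploiting the three structural facts stated there: the top row is exact, each column is a short exact sequence of vector bundles, and the whole diagram commutes. The key observation is that, by~\eqref{Killing_curvature} and~\eqref{fancyR}, the curvature $\kappa$ of the Killing connection factors as $\mathcal{R}\colon E\to\mathcal{W}$ followed by the column-3 inclusion $\mathcal{W}\hookrightarrow\Wedge^2\otimes E$, where $\mathcal{W}$ is shorthand for the Riemann-symmetry bundle sitting at the bottom of column~3. In particular, the image of $\kappa(E)$ under the column-3 projection to the top row vanishes, which is what powers every diagrammatic step below.

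For the direction \eqref{curvature_reduced_complex} exact $\Rightarrow$ \eqref{key_complex} exact, I start with $\sigma\in\Gamma(\bigodot^2\Wedge^1)$ satisfying $\mathcal{L}\sigma=0$, so that $\mathcal{C}\sigma=2\mathcal{R}(e_0)$ for some local $e_0\in\Gamma(E)$. Injecting $\sigma$ into $\Wedge^1\otimes E$ along the column-2 inclusion and invoking commutativity of the square between the bottom two rows, I get $\dconn\sigma=\kappa(e_0)\in\kappa(E)$. Local exactness of~\eqref{curvature_reduced_complex} then supplies a local $e\in\Gamma(E)$ with $De=\sigma$. Writing $e=(\phi,\mu)\in\Gamma(\Wedge^1\oplus\Wedge^2)$ and reading off the two components of $De$ from~\eqref{killing_connection}, the symmetry of~$\sigma$ combined with the required skew-symmetry of~$\mu$ forces $\mu_{bc}=\nabla_{[b}\phi_{c]}$ and $\sigma_{bc}=\nabla_{(b}\phi_{c)}=(\mathcal{K}\phi)_{bc}$, as desired.

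For the reverse direction, given $\psi\in\Gamma(\Wedge^1\otimes E)$ with $\dconn\psi\in\kappa(E)$, the column-2 projection $\bar\psi$ satisfies $\dconn\bar\psi=0$ in the top row, because $\kappa(E)$ sits entirely in the image of the column-3 injection. Local exactness of the top row---which is a twisted de~Rham complex, hence Poincar\'e-exact---yields a local primitive at column~1 position~1, which lifts through the column-1 SES to some $\tilde\omega\in\Gamma(E)$ with $\overline{D\tilde\omega}=\bar\psi$. Then $\psi-D\tilde\omega$ lies in the image of $\bigodot^2\Wedge^1\hookrightarrow\Wedge^1\otimes E$, and by the same commutativity argument is $\mathcal{L}$-closed; the hypothesis of~\eqref{key_complex}'s exactness yields $\phi\in\Gamma(\Wedge^1)$ with $\psi-D\tilde\omega=\mathcal{K}\phi$, and combined with the column-1 injection $\iota\colon\Wedge^1\hookrightarrow E$ this exhibits $\psi=D(\tilde\omega+\iota\phi)$, closing the chase.

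The main obstacle is not the diagram chase itself but the verification, to be carried out in~\S\ref{kill_and_more_kill}, that the yet-undescribed arrows at positions~2 and~3 of~\eqref{shiny_new_diagram} can be chosen so that every column is a genuine short exact sequence of vector bundles and every square commutes on the nose; the factor of $1/2$ multiplying $\mathcal{C}$ and the precise identification of $\mathcal{C}/2$ with the restriction of $\dconn$ to $\bigodot^2\Wedge^1\subseteq\Wedge^1\otimes E$ (modulo the column-3 identifications) are the delicate bookkeeping points. The constant-rank hypothesis on~$\mathcal{R}$ enters only to ensure that $\mathcal{L}$ lands in a genuine vector bundle quotient, making the conclusion well-posed; once these ingredients are in hand the equivalence is purely formal.
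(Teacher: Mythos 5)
Your proposal is correct and follows essentially the same route as the paper: its proof of Theorem~\ref{shiny_new_theorem} likewise verifies commutativity and the exactness of the columns and of the top row of~\eqref{shiny_new_diagram}, passes to the quotient of the third column by $\kappa(E)$, respectively ${\mathcal{R}}(E)$, and then invokes precisely the ``typical diagram chase'' that you write out explicitly in both directions. The only differences are of emphasis: you spell out the chase while deferring the diagrammatic verifications that the paper's proof supplies in \S\ref{kill_and_more_kill} (where, note, the first two columns are exact via differential splitting operators at the level of sections rather than as sequences of vector-bundle maps).
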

From equation~(\ref{Killing_curvature}) for the curvature $\kappa$ of the Killing connection it is evident that  that assumption on the rank of $\mathcal R$ is constant is equivalent to the rank of $\kappa$ being constant.

The above reasoning was the basis for our results in~\cite{CostanzaEastwoodLeistner21}, in which we
investigated the local exactness of the complex (\ref{key_complex}) for
{\em Rieman\-nian\/} locally symmetric spaces (meaning that $\nabla_aR_{bcde}=0$).  We showed that
(\ref{key_complex}) is always locally exact in the indecomposable case and,
for products
$$M=M_1\times M_2\times \cdots\times M_k$$
of indecomposables, we showed that (\ref{key_complex}) is locally exact unless
$M$ has at least one flat factor and at least one Hermitian factor, in which
case (\ref{key_complex}) fails to be locally exact.

For a general semi-Rieman\-nian locally symmetric space, indecomposable or not, similar
arguments, using our operator ${\mathcal{D}}$ and the augmented curvature,
reproduce the conclusions of Gasqui and Goldschmidt~\cite{GasquiGoldschmidt83}, and especially
Th\'eor\`eme 7.2, which identifies the local range of the Killing operator with
the kernel of a suitable {\em third order\/} linear operator
$$\begin{picture}(12,6)(0,0)
\put(0,0){\line(1,0){12}}
\put(0,6){\line(1,0){12}}
\put(0,0){\line(0,1){6}}
\put(6,0){\line(0,1){6}}
\put(12,0){\line(0,1){6}}
\end{picture}\longrightarrow
\overline{\begin{picture}(18,12)(0,2)
\put(0,0){\line(1,0){12}}
\put(0,6){\line(1,0){18}}
\put(0,12){\line(1,0){18}}
\put(0,0){\line(0,1){12}}
\put(6,0){\line(0,1){12}}
\put(12,0){\line(0,1){12}}
\put(18,6){\line(0,1){6}}
\end{picture}}\oplus
\overline{\begin{picture}(12,12)(0,2)
\put(0,0){\line(1,0){12}}
\put(0,6){\line(1,0){12}}
\put(0,12){\line(1,0){12}}
\put(0,0){\line(0,1){12}}
\put(6,0){\line(0,1){12}}
\put(12,0){\line(0,1){12}}
\end{picture}}\,.$$

In the {\em Lorentzian\/} case, an indecomposable symmetric
space is of constant sectional curvature or universally covered by a  Cahen--Wallach
space~\cite{Cahen-Wallach70}.  Since symmetric spaces are complete,  a simply connected symmetric space is a semi-Rieman\-nian  product of indecomposable ones \cite{derham52,wu64}.
Therefore, a Lorentzian locally symmetric space is locally isometric to the product of indecomposable Rieman\-nian symmetric spaces and a Lorentzian space that is either of constant sectional curvature or  a  Cahen--Wallach space. This is the {\em local de~Rham-Wu decomomposition} of a Lorentzian locally symmetric space.
In this article, we show that
(\ref{key_complex}) is locally exact on Cahen--Wallach spaces and, in general,
we prove the following.
\begin{theorem}\label{calabi-theo}
Suppose
$$M=M_1\times M_2\times \cdots\times M_k$$
is the local de~Rham-Wu decomposition of a Lorentzian locally symmetric space.  Then
(\ref{key_complex}) is locally exact unless there is at least one Hermitian
factor and at least one factor that is either flat or a Cahen--Wallach space, in
which case local exactness fails.
\end{theorem}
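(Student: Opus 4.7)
The plan is to combine Theorem~\ref{shiny_new_theorem} with a careful analysis of how the Killing connection behaves under the de~Rham-Wu decomposition $M = M_1 \times \cdots \times M_k$. By Theorem~\ref{shiny_new_theorem}, local exactness of~(\ref{key_complex}) is equivalent to local exactness of~(\ref{curvature_reduced_complex}) for the Killing connection~(\ref{killing_connection}) on~$M$. Since the Levi-Civita connection, its curvature, and hence the Killing connection all respect the product structure, I would begin by writing each bundle in~(\ref{curvature_reduced_complex}) as a direct sum of pieces indexed by subsets of factors, via a K\"unneth-type decomposition of $\Wedge^\bullet$, and then check that the induced decomposition of $\dconn$ and of the homomorphism $\kappa$ is compatible enough that local cohomology can be computed piece by piece.

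For the indecomposable factors, the Riemannian ones and the Lorentzian ones of constant sectional curvature are already handled: the former by~\cite{CostanzaEastwoodLeistner21} and the latter by the signature-agnostic computation of Calabi~\cite{Calabi61}. The genuinely new ingredient is the indecomposable Cahen-Wallach case. Here, the curvature tensor has a special form determined by the parallel null $1$-form $u_a$ and a parallel symmetric tensor supported transverse to $u_a$. Consequently the homomorphism $\mathcal{R}$ in~(\ref{fancyR}) has constant rank and its image admits an explicit description. I would then verify, by a direct computation in Cahen-Wallach coordinates, that every $\psi \in \Gamma(\Wedge^1 \otimes E)$ with $\dconn\psi \in \kappa(E)$ is locally of the form $D\phi$; establishing this sufficiency is the principal technical step for this factor.

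With the indecomposable cases in hand, the product analysis follows the pattern of~\cite{CostanzaEastwoodLeistner21}. For the failure direction, given a parallel K\"ahler $2$-form $\omega$ on a Hermitian factor and either a parallel $1$-form $\tau$ on a flat factor or the parallel null form $u$ on a Cahen-Wallach factor, one constructs explicit obstruction classes from wedges and tensor products of these parallel objects: such classes are easily seen to be cocycles for~(\ref{curvature_reduced_complex}), but they cannot be realised as $D\phi$ precisely because the curvature on the ``degenerate'' factor fails to couple $\omega$ back into the range. Conversely, when no such incompatible pair is present, the cross-factor components of $\mathcal{R}$ are surjective onto the relevant subspaces and the factor-wise exactness assembles into exactness on the product via a standard double-complex argument. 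The main obstacle will be the Cahen-Wallach analysis, both in the indecomposable case and in verifying that its parallel null direction plays exactly the role of a flat factor's parallel $1$-forms in producing the product-level obstructions.
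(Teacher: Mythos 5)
Your reduction via Theorem~\ref{shiny_new_theorem} (more precisely, Corollary~\ref{2exact}) and your obstruction for the failure direction---pairing a K\"ahler potential on a Hermitian factor with a parallel $1$-form on a flat or Cahen--Wallach factor---do match the paper, which carries this out in Example~\ref{coro:non_exactness}. The two positive steps on which your outline rests, however, contain genuine gaps. First, the product step: the Killing bundle of $M\times\bar M$ is not the direct sum of the Killing bundles of the factors but $E\oplus\bar E\oplus\Lambda^{1,1}$, and the cohomology of~(\ref{curvature_reduced_complex}) does not decompose by any K\"unneth-type formula; all of the difficulty is concentrated in the mixed component. The paper's product result (Proposition~\ref{prop:small_exactness}) must show that the $\Lambda^1\otimes\Lambda^{1,1}$ part of any element of $\ker(D^{\wedge})$ vanishes, and this requires Lemma~\ref{prop:dnabla_kernel} together with the eigenvalue characterisation of Hermitian factors and Lemma~3 from~\cite{CostanzaEastwoodLeistner21}; the argument only goes through when one of the two factors is of non-zero constant curvature or non-Hermitian irreducible Riemannian, which is exactly why the theorem is Lorentzian and why the authors state that their methods fail even for a product of two Cahen--Wallach spaces. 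Your claimed ``standard double-complex argument'' assembling factor-wise exactness would, if valid, settle that open case as well, so this assertion is a missing idea rather than a routine verification.

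Second, the indecomposable Cahen--Wallach case. You identify the principal technical step (``verify sufficiency by a direct computation in Cahen--Wallach coordinates'') but provide no mechanism for carrying it out, and the paper in fact never integrates the differential system directly. Instead it proves exactness purely formally from the curvature filtration of Section~3: for a Cahen--Wallach space (possibly times a Euclidean factor) one computes $\mathfrak{h}_0=\operatorname{aut}(R)$, $\mathfrak{h}_1=\mathfrak{co}(n)\ltimes\mathbb{R}^n$ (the stabiliser of the parallel null line) and $\mathfrak{h}_2=\mathfrak{so}(1,n+1)$, so that $E=E_2$ (Theorem~\ref{theo:CW_exactness}); since local symmetry makes the filtration parallel (Proposition~\ref{Ekparallel}), Proposition~\ref{EeqEk} then yields exactness of the Killing connection, and the Lorentzian theorem follows by the inductive bookkeeping of Theorem~\ref{killcon-exact}, in which the Euclidean directions are absorbed into the Lorentzian factor before peeling off the irreducible Riemannian factors one at a time. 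Without this filtration idea (or an actually executed coordinate analysis, which would be substantial), the positive direction for Cahen--Wallach factors---and hence the theorem---remains unproven in your proposal.
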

The proof of this theorem will be provided in Section~\ref{lorsec} by tying  together many results obtained throughout the article.
Although~\cite{CostanzaEastwoodLeistner21} deals with Rieman\-nian metrics and this article with
Lorentzian, much of our reasoning applies to general signature.  In particular,
Section~\ref{sec:products} is valid in arbitrary signature and there are just
two unresolved issues in extending Theorem~\ref{calabi-theo} in general.
Firstly, it is less clear how to obtain explicit replacements for the
Cahen--Wallach spaces.  Secondly, our current arguments break down for general
products, even for a product of two Cahen--Wallach spaces.

A strong motivation for identifying the range of the Killing operator (on Lorentzian manifolds)
comes from general relativity as follows.  For a vector field $X^a$, we observe
that
$$\textstyle({\mathcal{K}}X)_{ab}\equiv\nabla_{(a}X_{b)}
=\frac12{\mathcal{L}}_Xg_{ab},$$
where ${\mathcal{L}}_X$ is the Lie derivative.  Hence, the range of
${\mathcal{K}}$ may be regarded as the infinitesimal changes in the metric
$g_{ab}$ under the infinitesimal co\"ordinate changes, namely the first order
flows of vector fields.  As a physical theory, these co\"ordinate changes
should have no effect: they should be regarded as mere {\em gauge changes\/}
and, indeed, it is convenient to formulate {\em linearised gravity\/} by
discarding them from a metric of the form $\eta_{ab}+\epsilon h_{ab}$, where
$\eta_{ab}$ is the {\em flat\/} Minkowski metric and $h_{ab}$ is an arbitrary
symmetric tensor.  Instead, one could start with a {\em curved} background
metric and a locally symmetric metric is the next most reasonable choice.  So,
our task is to identify the {\em gauge freedom\/} in this setting (as further
expounded in \cite[5.7.11]{penroserindler1} and \cite[C.2.17]{wald84}).
Similar gravitational motivation lies behind
\cite{AksteinerAnderssonBackdahlKhavkineWhiting21,Khavkine19}, which indentify
the range of the Killing operator on a Schwarzschild or Kerr background.

As noted in~\cite{E}, the Killing operator
${\mathcal{K}}:\Wedge^1\to\bigodot^2\!\Wedge^1$ enjoys a hidden invariance,
which places it in the realm of projective differential geometry.
Specifically, there is an invariant differential operator
$\Wedge^1(2)\to\bigodot^2\!\Wedge^1(2)$, where $\Wedge^1(2)$ denotes the bundle
of $1$-forms with projective weight $2$, which may be identified with
${\mathcal{K}}$ in the presence of a metric.  It is the
operator~\cite[Equation~(2.3)]{E} and what we call here the {\em Killing
connection\/} appears as an invariant modification of the projectively
invariant {\em tractor connection\/} in~\cite[Section~4]{E}.  As shown by
Hammerl, Somberg, Sou\v{c}ek, and
\v{S}ilhan~\cite{HammerlSombergSoucekSilhan12}, this is part of a general
theory of {\em prolongation connections\/} for an extensive class of
overdetermined operators known as {\em first BGG operators\/} in the general
theory of {\em parabolic differential geometry\/}~\cite{cap-slovak-book1}.  By
this route, our general results on the range of a connection can be applied to
the class of first BGG operators.  In particular, the commutativity of the
bottom left-hand square of (\ref{shiny_new_diagram}) appears as a key feature
of the prolongation
connection~\cite[Corollary~3.1]{HammerlSombergSoucekSilhan12}.  Although the
Killing connection is not constructed by Hammerl-Somberg-Sou\v{c}ek-\v{S}ilhan
in their follow-up article~\cite{HammerlSombergSoucekSilhan12inv}, it would be
for the bundle $\Wedge^2({\mathcal{E}}_C)$ in their
notation~\cite[Section~3]{HammerlSombergSoucekSilhan12inv}.  A yet more general
theory of prolongation for overdetermined operators is constructed
in~\cite{BransonCapEastwoodGover06}.  As expected, a connection emerges.  It
lacks the invariance of~\cite{HammerlSombergSoucekSilhan12} but, nevertheless,
our results in this article on the range of a general connection are surely
applicable for the geometric systems considered
in~\cite{BransonCapEastwoodGover06}.

\newcommand{\Riem}{\mathcal{R}}
\newcommand{\Calabi}{\operatorname{D}}

Our article is organised as follows.
\tableofcontents

\section{The range of a connection on a vector bundle}\label{ConnectionSection}
When convenient, we employ the Penrose abstract indices notation as in \cite{penroserindler1}.
On a manifold, Latin indices \( a, b, c, \ldots \) stand for (abstract)  indices of $TM$ or $\Lambda^1$, whereas Greek indices \( \alpha, \beta, \gamma, \ldots \) stand for indices of a given vector bundle.
Under this convention, for example, an object of indices \( X_a{}^{\alpha}{}_{\beta} \) is a section of \( \Wedge^1 \otimes E \otimes E^* \).
Enclosing indices in round brackets \( (a \ldots b) \) means to take the symmetric component, and square brackets \( [a \ldots b] \) means to take the skew-symmetric component. For example, $\omega_{ab}\in \Wedge^2$ if and only if $\omega_{ab}=\omega_{[ab]}$ and $h_{ab}\in \bigodot^2\!\Wedge^1$ if and only if $h_{ab}=h_{(ab)}$.

\subsection{Connections on vector bundles}

For a connection  $\D:E\to \Wedge^1\otimes E$ on a vector bundle  $E$, we study the following problem, primarily restricting to the local setting:
\begin{question}\label{que: Range of connection?}
  When is a section \( \tensor{\phi}{_a^\alpha} \) of \( \Wedge^1\otimes E \) in the range of \( \connection \), i.e.~when is there a section \( \phi^\alpha \) of \( E \) such that
  $\connection_a \phi^\alpha =\tensor{\phi}{_a^\alpha}$?
\end{question}
Recall that a connection defines the {\em coupled de~Rham sequence}
\[ \begin{tikzcd}
  E \ar[r, "\connection"] & \Wedge^1 \otimes E \ar[r, "\dconn"] & \Wedge^2 \otimes E \ar[r, "\dconn"] & \Wedge^3 \otimes E \ar[r, "\dconn"] & \cdots
\end{tikzcd} , \]
where $\dconn$ denotes the  exterior covariant derivative,
which can be expressed concisely using index notation,
\[ \dconn_a \phi^{}_{b \ldots c}{}^\alpha := \connection^{}_{[a}\phi^{}_{b \ldots c]}{}^\alpha \quad \mbox{ for any \(E\)-valued \(p\)-form } \quad \phi_{b \ldots c}{}^{\alpha} \in \Gamma(\Wedge^p \otimes E) . \]
Here the derivative \( \connection_{a}\phi_{b \ldots c}{}^\alpha \) is defined by fixing an auxilliary choice of torsion-free connection \( \nabla_a \) on \( \Wedge^1 \).
The induced connection \( \nabla_a \) on \( \Wedge^p \) and then \( \connection_a \) on \( \Wedge^p \otimes E \) depend on this choice, but due to the skew-symmetrization, \( \dconn{}_a \phi{}_{b \ldots c}{}^\alpha \) does not.
See for example \cite[Section 1.3.1]{cap-slovak-book1} for more details.
Note that we have made  a choice of convention  here by including a factor of $1/p!$ 
in the skew symmetrisation,~e.g. $ \connection^{}_{[a}\phi^{}_{b]}=\tfrac12\left(
 \connection^{}_{a}\phi^{}_{b}-  \connection^{}_{b}\phi^{}_a\right)$,
which differs from \cite{cap-slovak-book1}, and instead follows~\cite{penroserindler1}.

In general, the coupled de~Rham sequence is not a complex.
Rather, the composition
\[\kappa := \dconn \D:E \xrightarrow{\qquad}   \Wedge^2 \otimes E \]
defines the curvature of \( \connection \), which is easily checked to be an algebraic operator.
One may consider \( \kappa \) as a section \( \tensor{\kappa}{_a_b^\alpha_\beta} \) of \( \Wedge^2 \otimes \End(E) \).
Writing
$\phi^\alpha$ for an arbitrary section of~$E$, the curvature is expressed in index notation as
\begin{equation}\label{eq-curvature of connection}
  \kappa_{ab}{}^\alpha{}_\beta\phi^\beta = \tfrac{1}{2}(\D_a\D_b-\D_b\D_a)\phi^\alpha .
\end{equation}
Note that the curvature of a connection on \( E \) with conventions as here differs from another common definition, the anti-commutator of derivatives, by a largely irrelevant factor of~\( \tfrac{1}{2} \).  For the Riemannian curvature tensor $R$, we follow the standard convention and define  it as $R=2\kappa$, where $\kappa=\nabla^\Wedge\circ \nabla$, for $\nabla$ the Levi-Civita connection on $TM$.

It is straightforward to check that the  compositions
\[ \kappa^{(k)} := (\dconn)^2\colon \Wedge^k \otimes  E \xrightarrow{\qquad}   \Wedge^{k+2} \otimes E \]
 are also homomorphisms, and are given by formula
\[ \kappa^{(k)}(\tensor{\phi}{_{c \ldots d}^\alpha}) = \tensor{\kappa}{_{[ab}^\alpha_{|\beta|}}\tensor{\phi}{_{c \ldots d]}^\beta},  \]
where $|\beta|$ indicates that the index $\beta$ is excluded from the skew-symmetrisation.
Consequently, the coupled de~Rham sequence of a connection is a differential complex if  the connection $\connection$ is {\em flat}.
Conversely, if the coupled de~Rham sequence is a differential complex, then the connection is flat.
Moreover, with a flat connection $D$, locally $E$ can be trivialised by  $D$-parallel sections, and hence the coupled de~Rham sequence reduces to $\mathrm{rank}(E)$ copies of  the usual de~Rham complex. It is, therefore,  locally exact.
Hence, for flat connections Question \ref{que: Range of connection?} has a well-known answer:
a section \( \phi_a{}^{\alpha} \) is locally in the range of \( \connection \) if and only if the section is closed, \( \dconn_a \phi^{}_b{}^{\alpha} = 0 \).

Even for connections with curvature, the coupled de~Rham sequence provides a necessary condition for a section \( \phi_a{}^{\alpha} \) to be in the range of \( \connection \).
If \( \phi_a{}^{\alpha} = \connection_a \phi^{\alpha} \) for some \( \phi^{\alpha} \in \Gamma(E) \), then a further application of \( \dconn \) gives
\begin{equation}\label{eq-curature sufficient condition}
  \dconn_a \phi^{}_b{}^{\alpha} = \dconn_a \connection^{}_b \phi^{\alpha} = \kappa_{ab}{}^{\alpha}{}_{\beta} \phi^{\beta} ,
\end{equation}
which is to say that \( \dconn_a \phi^{}_b{}^{\alpha} \) is in the range of the curvature operator.
This is a condition that is straightforward to verify in practice, by taking one derivative and checking linear algebraic conditions.
However, it is not always a sufficient condition.

To improve on the situation, observe that
\begin{equation}\label{eq: pre-Bianchi}
  \dconn\circ \kappa^{(k)} = (\dconn)^3 = \kappa^{(k+1)}\circ \dconn
\end{equation}
The \emph{Bianchi identity} \( \connection_{[a} \kappa_{bc]}{}^\alpha{}_\beta = 0 \) follows from skewing the indices \( abc \) in
\[ \connection_a \kappa{}_{bc}{}^\alpha{}_\beta \phi^\beta = (\connection_a \kappa{}_{bc}{}^\alpha{}_\beta) \phi^\beta + \kappa{}_{bc}{}^\alpha{}_\beta \connection_a \phi^\beta \quad \mbox{ for } \quad \phi^\alpha \in \Gamma(E)  \]
and comparing this to equation~(\ref{eq: pre-Bianchi}) for $k=0$.
From \eqref{eq: pre-Bianchi} also follows the existence of a well defined sub-sequence of the coupled de~Rham sequence, namely
\[ \begin{tikzcd}
0 \ar[r, "\connection"] & 0 \ar[r, "\dconn"] & \kappa(\sect{E})
\ar[r, "\dconn"] & \kappa^{(1)}(\sect{\Wedge^1 \otimes E}) \ar[r, "\dconn"] & \ldots
\end{tikzcd} . \]
\begin{remark*}
Here and from now on, whenever necessary, we suppose that the curvature \( \kappa \) and the induced homomorphisms $\kappa^{(k)}$ have constant rank, so that  \( \kappa^{(k)}(\Wedge^k \otimes E) \) is itself a vector bundle.
Since the rank is upper semi-continuous, this is an assumption that always holds on the connected components of an open dense set of $M$, and statements that require \( \kappa^{(k)}(\Wedge^k \otimes E) \) to be a vector bundle can be viewed as  being valid on such a connected component.
This restriction implicitly applies to   statements in this  and the following section that involve the kernel and the image of the curvature, or related quotients. In Section~\ref{kill_and_more_kill}, where we prove Theorem~\ref{shiny_new_theorem}, it is explicitly assumed. In Sections~\ref{sec:products} and~\ref{lorsec} the assumption is satisfied on all of $M$ because of local symmetry and therefore allows us to use the all earlier statements without restriction.
\end{remark*}

The quotient sequence
\begin{equation}\label{eq: gauge complex}
  \begin{tikzcd}
    \sect{E} \ar[r, "\connection"] & \sect{\Wedge^1 \otimes E} \ar[r, "\dconn"] & \sect{\frac{\Wedge^2 \otimes E}{\kappa(E)}}
    \ar[r, "\dconn"] & \sect{\frac{\Wedge^3 \otimes E}{\kappa^{(1)}(\Wedge^1 \otimes E)}} \ar[r, "\dconn"] & \ldots
  \end{tikzcd}
\end{equation}
is, by construction, a complex.
Denote the \( k \)-th homology of this complex by \( H^k(E, \connection) \).

This homology provides an answer to question \ref{que: Range of connection?}, generalising the flat case.
Indeed, it is easily seen that a section \( \phi \) of \( \Wedge^k \otimes E \) is in the range of \( \dconn \) if and only if it represents the zero homology class in \( H^k(E, \connection) \).
\cblock{If \( \eta \) is in the range of connection, then applying connection again get element of range of curvature, so a class in homology, which is then manifestly exact. Conversely, if \( \eta \) represents a zero class, then up to range of curvature is in range of connection, but curvature is also range of connection.}

In particular, when the complex is exact at \( \Wedge^1 \otimes E \), the range of \( \connection \) comprises exactly those sections \( \phi \) of \( \Wedge^1 \otimes E \) for which \( \dconn \phi = \kappa(\psi) \) for some \( \psi \in \Gamma(E) \).
We observe but do not pursue here the fact that homology further along the complex can be used to answer similar questions.
This suggests the following definition.
\begin{definition}\label{exact_connection}
 A connection \( \connection \) is \emph{exact at \( \Wedge^1 \otimes E \)},
 or \emph{exact} for short, if for all
 \( \phi \in \Gamma(\Wedge^1 \otimes E) \) such that
 \( \dconn \phi \) is in the range of \( \kappa \), one has
 \( \phi \) in the range of \( \connection \).
\end{definition}
We should point out that for an exact connection, as just defined, the coupled de Rham sequence is not necessarily a locally exact complex, which, as explained above, is only the case for flat connections. Hence, in regards to the coupled de Rham sequence, an exact connection is only {\em exact up to curvature} or {\em exact modulo curvature}, but for brevity and since there is no danger of confusion with the flat case, we use the term {\em exact connection}.
Not all connections are exact, as can be seen for any non-flat connection on an arbitrary bundle over a surface.
On the other hand, flat connections are clearly exact.

Definition~\ref{exact_connection} says that the
complex of vector bundles (\ref{curvature_reduced_complex}) is exact.
Equivalently, if we set $E_0\equiv\ker(\kappa)$, then the complex
\begin{equation}\label{exactE0}
\begin{tikzcd}
  \sect{E_0} \ar[r, "\connection"] & \sect{\Wedge^1 \otimes E} \ar[r, "\dconn"] & \sect{\Wedge^2 \otimes E}
\end{tikzcd} \end{equation}
is exact. This is a consequence of the following proposition.
\begin{proposition}\label{thm: prop isomorphism between H1 and ker D}
  There is a canonical isomorphism
  \[ H^1(E,\connection) \cong \ker(\dconn) / \operatorname{im}(\connection|_{E_0}) . \]
\end{proposition}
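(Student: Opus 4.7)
The plan is to construct an explicit map
\[
\Phi\colon\{\phi\in\Gamma(\Wedge^1\otimes E)\mid \dconn\phi\in\kappa(\Gamma(E))\}\longrightarrow \ker(\dconn)/\operatorname{im}(\connection|_{E_0})
\]
and then show that it is surjective with kernel equal to $\operatorname{im}(\connection)$; this immediately yields the isomorphism, since $H^1(E,\connection)$ is by definition the quotient of the left-hand domain by $\operatorname{im}(\connection)$.

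To define $\Phi$, suppose $\phi\in\Gamma(\Wedge^1\otimes E)$ satisfies $\dconn\phi=\kappa(\psi)$ for some $\psi\in\Gamma(E)$. Using $\kappa=\dconn\circ\connection$, I observe $\dconn(\phi-\connection\psi)=\kappa(\psi)-\dconn\connection\psi=0$, so $\phi-\connection\psi\in\ker(\dconn)$. I would set $\Phi(\phi):=[\phi-\connection\psi]$ and check independence of the auxiliary choice: if $\psi'$ is another section with $\kappa(\psi')=\dconn\phi$, then $\psi-\psi'\in\ker(\kappa)=E_0$, and $(\phi-\connection\psi)-(\phi-\connection\psi')=\connection(\psi'-\psi)\in\operatorname{im}(\connection|_{E_0})$, so the class in $\ker(\dconn)/\operatorname{im}(\connection|_{E_0})$ is unaffected.

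Next I would verify that $\Phi$ factors through $H^1(E,\connection)$ and is injective on it. If $\phi=\connection\eta$ for some $\eta\in\Gamma(E)$, then $\dconn\phi=\kappa(\eta)$, so one may take $\psi=\eta$, obtaining $\Phi(\phi)=[0]$; hence $\Phi$ descends to a well-defined map $\bar{\Phi}$ on $H^1(E,\connection)$. Conversely, if $\bar{\Phi}[\phi]=0$, then $\phi-\connection\psi=\connection\eta$ for some $\eta\in\Gamma(E_0)$, giving $\phi=\connection(\psi+\eta)\in\operatorname{im}(\connection)$, so $[\phi]=0$ in $H^1(E,\connection)$. For surjectivity, any $\phi\in\ker(\dconn)$ trivially satisfies $\dconn\phi=\kappa(0)$, and taking $\psi=0$ gives $\Phi(\phi)=[\phi]$, so every class in the target is hit.

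The only point requiring attention is the implicit assumption (made in the remark preceding the proposition) that $\kappa$ has constant rank, so that $E_0=\ker(\kappa)$ is a genuine subbundle and $\Gamma(E_0)=\ker(\kappa\colon\Gamma(E)\to\Gamma(\Wedge^2\otimes E))$; without this, the identification of $\psi-\psi'$ as a section of $E_0$ in the well-definedness step would be delicate. Given this hypothesis, none of the steps above involves more than a diagram chase, and I expect the whole proof to be essentially formal — the substance of the statement is really the observation that modding out by $\kappa(E)$ in degree two is the same as remembering only the $\ker(\dconn)$-part of $\Wedge^1\otimes E$ up to the action of the kernel of $\kappa$ on $E$.
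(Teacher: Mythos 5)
Your proof is correct and takes essentially the same route as the paper: both construct the map $[\phi]\mapsto[\phi-\connection\psi]$, where $\dconn\phi=\kappa(\psi)$, use $\ker(\kappa)=E_0$ to see independence of the choice of $\psi$, and obtain the inverse by noting that any element of $\ker(\dconn)$ already represents a class in $H^1(E,\connection)$. You simply spell out the well-definedness, injectivity and surjectivity checks that the paper treats as clear, and your remark about the constant-rank hypothesis coincides with the paper's standing assumption stated just before the proposition.
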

\begin{proof}
  For \( [\phi] \in H^1(E,\connection) \) and any representative \( \phi \) of \( [\phi] \), there is, by definition, some \( \psi \in \Gamma(E) \) so that \( \dconn \phi = \kappa(\psi) \).
  Although not uniquely specified, the difference between choices \( \psi, \psi' \) will be a section of \( E_0 \).
  Define the map \( f\colon [\phi] \mapsto \phi - \connection \psi \) into \( \ker(\dconn) / \operatorname{im}(\connection|_{E_0}) \),
  which is clearly independent of choices made.

  That  \( f \) has an inverse  follows by noting that an element \( \phi \) in the kernel of \( \dconn \) is also contained in the kernel
  \[ \ker\bigl( \dconn \colon \Wedge^1 \otimes E \to \Wedge^2 \otimes E / \kappa(E) \bigr) , \]
  so represents an element of \( H^1(E,\connection) \).
\end{proof}

The content of the proof is that any element of \( H^1(E,\connection) \) may be represented by an element of the kernel of \( \dconn \), and uniquely so if \( E_0 = 0 \), i.e. \( \kappa \) is injective.
This is a useful maneuver, which will be employed often below.
\begin{proposition}\label{Dinj-prop}
The connection $D$ is exact if and only if the complex (\ref{exactE0}) is exact. In particular,
if $\dconn:\Wedge^1\otimes E \to \Wedge^2\otimes E$ is injective, then $\D$ is exact. Conversely, if $\D$ is exact and the curvature $\kappa: E\to \Wedge^2\otimes E$ is injective, then $\dconn:\Wedge^1\otimes E \to \Wedge^2\otimes E$ is injective.
\end{proposition}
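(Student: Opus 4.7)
The plan is to unpack the definitions and chase the diagram in a direct, elementary way; no real obstacle is expected.

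For the equivalence of exactness of $D$ with exactness of the complex~(\ref{exactE0}), I would argue both directions as follows. For the easy direction, assume $D$ is exact. Given $\phi\in\Gamma(\Wedge^1\otimes E)$ with $\dconn\phi=0$, note that $0\in\kappa(E)$, so the hypothesis of Definition~\ref{exact_connection} holds trivially; hence $\phi=\D\psi$ for some $\psi\in\Gamma(E)$, and applying $\dconn$ gives $\kappa\psi=\dconn\D\psi=\dconn\phi=0$, so $\psi\in\Gamma(E_0)$. For the converse, assume~(\ref{exactE0}) is exact and take $\phi\in\Gamma(\Wedge^1\otimes E)$ with $\dconn\phi=\kappa\sigma$ for some $\sigma\in\Gamma(E)$. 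Then $\dconn(\phi-\D\sigma)=\kappa\sigma-\kappa\sigma=0$, so by exactness of~(\ref{exactE0}) there is $\psi_0\in\Gamma(E_0)$ with $\D\psi_0=\phi-\D\sigma$, whence $\phi=\D(\sigma+\psi_0)$ lies in the range of~$\D$.

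For the two supplementary claims, both reduce to the same identity $\dconn\circ\D=\kappa$. If $\dconn\colon\Wedge^1\otimes E\to\Wedge^2\otimes E$ is injective and $\dconn\phi=\kappa\sigma$, then $\dconn\phi=\dconn\D\sigma$ forces $\phi=\D\sigma$, which is exactness of~$D$. Conversely, assume $D$ is exact and $\kappa$ is injective. If $\dconn\phi=0$, then a fortiori $\dconn\phi\in\kappa(E)$, so $\phi=\D\psi$ for some $\psi\in\Gamma(E)$; applying $\dconn$ gives $\kappa\psi=\dconn\phi=0$, whence $\psi=0$ by injectivity of $\kappa$, and therefore $\phi=0$.

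The entire argument is a formal diagram chase using only the definition of exactness of $D$, the identity $\dconn\circ\D=\kappa$, and Proposition~\ref{thm: prop isomorphism between H1 and ker D} in spirit (although we do not need to invoke it explicitly). The only pitfall to watch for is the correct interpretation of ``$\dconn\phi$ in the range of $\kappa$'' at the level of sections: since $\kappa(E)$ is assumed to be a subbundle (see the Remark preceding Proposition~\ref{thm: prop isomorphism between H1 and ker D}), any section of this subbundle is locally of the form $\kappa\sigma$ for some $\sigma\in\Gamma(E)$, which legitimises the manipulations above.
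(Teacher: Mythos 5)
Your proof is correct and takes essentially the same route as the paper: the paper deduces Proposition~\ref{Dinj-prop} from the isomorphism $H^1(E,\connection)\cong\ker(\dconn)/\operatorname{im}(\connection|_{E_0})$ of Proposition~\ref{thm: prop isomorphism between H1 and ker D}, and your direct chase (using $0\in\kappa(E)$ in one direction and subtracting $\D\sigma$ to reduce to $\ker(\dconn)$ in the other) is exactly that argument unwound, with the same treatment of the supplementary injectivity claims. Your closing caveat about interpreting ``$\dconn\phi$ in the range of $\kappa$'' via the constant-rank assumption on $\kappa(E)$ is the right one and matches the paper's standing hypothesis.
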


We will say that a subbundle \( F \subseteq E \) is \emph{parallel} for \( \connection \) if
\[ \connection( \Gamma(F) ) \subseteq \Gamma(\Wedge^1 \otimes F) . \]
On a parallel subbundle $F$, the connection $D$ induces a connection by restriction, which we denote by $D|_F$.

\begin{corollary}\label{coro:ker_D_exact}
  Let $E$ be a vector bundle with connection $D$ and suppose that $F \subseteq E$ is a parallel sub-bundle such that $D|_{F}$ is exact. If $\ker(D^{\wedge}) \subseteq \Gamma(\Wedge^1\otimes F)
  $, then $D$ is exact.
\end{corollary}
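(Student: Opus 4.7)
The plan is to unpack the exactness criterion directly. I start with a section $\phi \in \Gamma(\Wedge^1 \otimes E)$ such that $\dconn \phi = \kappa(\psi)$ for some $\psi \in \Gamma(E)$, and aim to produce $\eta \in \Gamma(E)$ with $D\eta = \phi$.

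The first step is the shift trick from the proof of Proposition~\ref{thm: prop isomorphism between H1 and ker D}: replace $\phi$ by $\tilde\phi := \phi - D\psi$. Since $\kappa = \dconn \circ D$, one has $\dconn \tilde\phi = \kappa(\psi) - \kappa(\psi) = 0$, so $\tilde\phi \in \ker(\dconn)$. Were I able to express $\tilde\phi$ in the range of $D$, then $\phi = \tilde\phi + D\psi$ would lie in the range of $D$ as well. This reduction is the point of invoking Proposition~\ref{thm: prop isomorphism between H1 and ker D}, which says that every class in $H^1(E,\connection)$ admits a representative in $\ker(\dconn)$.

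Next, the hypothesis $\ker(\dconn) \subseteq \Gamma(\Wedge^1 \otimes F)$ forces $\tilde\phi \in \Gamma(\Wedge^1 \otimes F)$. Because $F$ is parallel for $D$, the restriction of $\dconn$ to $\Wedge^1 \otimes F$ agrees with the exterior covariant derivative of $D|_F$ and takes values in $\Wedge^2 \otimes F$. In particular the equation $\dconn \tilde\phi = 0$ is exactly the hypothesis of exactness for $D|_F$ applied to $\tilde\phi$, with the trivial choice $0 \in \kappa|_F(\Gamma(F))$. The assumed exactness of $D|_F$ then yields $\eta_0 \in \Gamma(F)$ with $D\eta_0 = \tilde\phi$, and $\eta := \eta_0 + \psi$ completes the argument.

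I do not anticipate a genuine obstacle here: the whole proof is a short diagram chase consisting of the shift into $\ker(\dconn)$ followed by an invocation of the hypothesis on $D|_F$. The only technical point worth checking carefully is that the restriction of $\dconn$ to sections of $\Wedge^1 \otimes F$ really coincides with the exterior covariant derivative of $D|_F$, which is immediate from $F$ being parallel.
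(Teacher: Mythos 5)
Your proof is correct and follows essentially the same route as the paper: both reduce, via the shift $\phi \mapsto \phi - D\psi$ underlying Proposition~\ref{thm: prop isomorphism between H1 and ker D}, to a representative in $\ker(\dconn)$, use the hypothesis $\ker(\dconn)\subseteq\Gamma(\Wedge^1\otimes F)$, and then invoke exactness of $D|_F$. The only cosmetic difference is that the paper phrases this as a chain of inclusions $\ker(D^{\wedge}) \subseteq \ker((D|_{F})^{\wedge}) = \mathrm{Im}(D|_{F_{0}}) \subseteq \mathrm{Im}(D|_{E_{0}})$ using Proposition~\ref{Dinj-prop} with $F_0 = F\cap E_0$, whereas you unwind those propositions and apply the definition of exactness of $D|_F$ directly with the trivial choice $0=\kappa|_F(0)$.
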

		\begin{proof}
			Let $F_{0} = F \cap E_{0}$ denote the kernel of $\kappa$, restricted to $F$. Under the assumption that $D|_{F}$ is exact, we know that $\ker((D|_{F})^{\wedge}) = \mathrm{Im}(D|_{F_{0}})$, by Proposition \ref{Dinj-prop}. Then, if $\ker(D^{\wedge}) \subseteq \Gamma(\Wedge^1\otimes F)$,  it follows that
			\[
			\ker(D^{\wedge}) \subseteq \ker((D|_{F})^{\wedge}) = \mathrm{Im}(D|_{F_{0}}) \subseteq \mathrm{Im}(D|_{E_{0}}) \subseteq \ker(D^{\wedge}).
			\]
			This means that $\ker(D^{\wedge}) = \mathrm{Im}(D|_{E_{0}})$ and consequently $H^{1}(E, D)
			= \lbrace 0 \rbrace$ by Proposition~\ref{thm: prop isomorphism between H1 and ker D}.
		\end{proof}

One more observation is that exactness behaves well under direct sums.
\begin{proposition}\label{product-prop}
Given bundles \( E_1, E_2 \) over $M$ with connections \( \connection_1, \connection_2 \),
form the direct sum \( (E,\connection) = (E_1 \oplus E_2, \connection_1 + \connection_2) \).
The connection $\connection$ is exact if and only if  $\connection_1$ and \( \connection_2 \) are both exact.
\end{proposition}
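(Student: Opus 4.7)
The plan is to exploit the fact that, for a direct sum connection, essentially every piece of structure involved in the definition of exactness splits as a direct sum. Concretely, I would first record the splittings: for each $k$, $\Wedge^k\otimes E = (\Wedge^k\otimes E_1)\oplus(\Wedge^k\otimes E_2)$; the curvature is $\kappa=\kappa_1\oplus\kappa_2$, so that $\kappa(E)=\kappa_1(E_1)\oplus\kappa_2(E_2)$; and the coupled exterior derivative decomposes as $\dconn = \dconn_1\oplus\dconn_2$ where $\dconn_i$ is the exterior derivative coupled to $\connection_i$. Consequently, the complex (\ref{curvature_reduced_complex}) for $(E,\connection)$ is the direct sum of the corresponding complexes for $(E_1,\connection_1)$ and $(E_2,\connection_2)$.

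For the forward direction, assume $\connection_1$ and $\connection_2$ are exact. Given $\phi=(\phi_1,\phi_2)\in\Gamma(\Wedge^1\otimes E)$ with $\dconn\phi\in\kappa(E)$, the splitting gives $\dconn_i\phi_i\in\kappa_i(E_i)$ for $i=1,2$. Exactness of each $\connection_i$ yields $\psi_i\in\Gamma(E_i)$ with $\connection_i\psi_i=\phi_i$, and then $\connection(\psi_1,\psi_2)=\phi$.

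For the converse, assume $\connection$ is exact and take $\phi_1\in\Gamma(\Wedge^1\otimes E_1)$ with $\dconn_1\phi_1\in\kappa_1(E_1)$. Embed this as $\phi=(\phi_1,0)\in\Gamma(\Wedge^1\otimes E)$; then $\dconn\phi=(\dconn_1\phi_1,0)\in\kappa_1(E_1)\oplus 0\subseteq\kappa(E)$. By exactness of $\connection$, there is $(\psi_1,\psi_2)\in\Gamma(E)$ with $\connection(\psi_1,\psi_2)=\phi$, which forces $\connection_1\psi_1=\phi_1$. The same argument with the roles reversed handles $\connection_2$.

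There is no real obstacle: the whole proposition reduces to the purely categorical observation that exactness of a direct sum complex in a given position is equivalent to exactness of each summand. The only thing to be slightly careful about is to use the formulation of exactness as in Definition~\ref{exact_connection}, i.e.\ to check the condition $\dconn\phi\in\kappa(E)$ componentwise, which is immediate from the splitting of $\kappa$.
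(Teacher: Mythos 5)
Your proof is correct and follows essentially the same route as the paper: both rest on the observation that $\kappa$, $\dconn$ and hence the relevant complex split as direct sums over $E_1$ and $E_2$. The paper packages this as the splitting $H^k(E,\connection)=H^k(E_1,\connection_1)\oplus H^k(E_2,\connection_2)$ of the homology of the quotient complex, while you simply unwind the same splitting into an explicit componentwise verification of Definition~\ref{exact_connection}.
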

\begin{proof}
Clearly \( \kappa^{(k)}(\Wedge^k \otimes E) = \kappa^{(k)}_1(\Wedge^k \otimes E_1) \oplus \kappa^{(k)}_2(\Wedge^k \otimes E_2) \), so the quotient complex of Equation \eqref{eq: gauge complex} splits, and thus \( H^k(E, \connection) = H^k(E_1, \connection_1) \oplus H^k(E_2, \connection_2) \).
\end{proof}

We note that a connection \( (E,\connection) \) is equivalent to \( (E_1 \oplus E_2, \connection_1 + \connection_2) \) if and only if the summands \( E_1, E_2 \) of \( E \) are parallel, in which case \( D_i = D|_{E_i} \) for \( i = 1,2 \).

\begin{proposition}\label{prop:pullback_exactness}
	Let $D$ and $\bar{D}$ be exact connections on the vector bundles $E\to M$ and $\bar{E}\to \bar M$ over $M$ and $\bar{M}$  and denote by $\pi$ and $\bar \pi$ the natural projections from the product manifold $M\times \bar M$ to $M$ and $\bar M$ respectively.
	 Then, $\pi^{*}D$ and $\bar\pi^*\bar D$ are exact connections on the respective vector bundles $\pi^{*}E$ and $\bar\pi^*\bar E$ over $M\times \bar M$. Moreover,  $\pi^{*}D + \bar\pi^{*}\bar{D}$ is an exact connection on the vector bundle $\pi^{*}E\oplus \bar\pi^{*}\bar E$ over $M\times \bar M$.
\end{proposition}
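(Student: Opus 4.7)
My plan is to first establish the two pullback statements by a slicing argument in local product coordinates, and then deduce the direct sum assertion as an immediate consequence of Proposition~\ref{product-prop}. Since the two pullback claims are symmetric under interchange of $M$ and $\bM$, it suffices to treat $\pi^{*}D$ on $\pi^{*}E$.

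The key observation is that in local coordinates $(x^{a}, y^{i})$ on a product chart of $M \times \bM$, the pulled-back connection $\pi^{*}D$ has no ``vertical'' Christoffel components: its connection one-form only involves $dx^{a}$, so its curvature equals $\pi^{*}\kappa$ and has only $dx \wedge dx$ components. Decomposing a candidate $\psi \in \Gamma(\Wedge^{1} \otimes \pi^{*}E)$ as $\psi = \psi_{a}\, dx^{a} + \psi_{i}\, dy^{i}$, the hypothesis $\dconn \psi = (\pi^{*}\kappa)(\phi)$ for some $\phi \in \Gamma(\pi^{*}E)$ splits into three pieces: the $dx \wedge dx$ part says that for each fixed $\bar p \in \bM$ the one-form $\psi_{a}(x, \bar p)\, dx^{a}$ is sent by $\dconn$ into the image of the curvature of $D$; the $dx \wedge dy$ part is the compatibility relation $(\pi^{*}D)_{a}\psi_{i} = \partial_{i}\psi_{a}$; and the $dy \wedge dy$ part is closedness $\partial_{[i}\psi_{j]} = 0$ of the $\bM$-component at each fixed $x$.

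To construct a primitive $\sigma \in \Gamma(\pi^{*}E)$, I will proceed in two stages. Fixing a basepoint $\bar p_{0} \in \bM$, the assumed exactness of $D$ yields $\sigma_{0} \in \Gamma(U, E)$ with $D\sigma_{0} = \psi_{a}(x, \bar p_{0})\, dx^{a}$ on a sufficiently small $U \subset M$. Then, for each fixed $x \in U$, the $dy \wedge dy$ closedness allows me to invoke the Poincar\'e lemma in the $y$-variables to extend $\sigma_{0}(x)$ to $\sigma(x, y)$ on $\bar U$ satisfying $\partial_{i}\sigma = \psi_{i}$, uniquely determined by the initial condition $\sigma(x, \bar p_{0}) = \sigma_{0}(x)$. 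It remains to verify $(\pi^{*}D)_{a}\sigma = \psi_{a}$: this holds at $y = \bar p_{0}$ by construction, and since $\partial_{i}$ commutes with $(\pi^{*}D)_{a}$ (there are no $dy$-Christoffels), both sides have the same $\partial_{i}$-derivative precisely by the $dx \wedge dy$ compatibility, so equality propagates to all of $\bar U$. Hence $\pi^{*}D\sigma = \psi$, proving exactness of $\pi^{*}D$; the argument for $\bar\pi^{*}\bD$ is identical with $M$ and $\bM$ exchanged. The direct sum statement then follows at once from Proposition~\ref{product-prop} applied to $\pi^{*}D$ and $\bar\pi^{*}\bD$ on $\pi^{*}E \oplus \bar\pi^{*}\bE$ over $M \times \bM$. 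The step I expect to require the most care is checking that the three separated pieces of the curvature hypothesis supply exactly the three ingredients (slicewise exactness, $y$-closedness, and $\partial_{i}$-compatibility) needed to glue the two-stage construction into a well-defined primitive on the product.
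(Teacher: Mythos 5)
Your proof is correct, and it rests on the same basic ingredients as the paper's: the bidegree splitting $\Wedge^1=\Wedge^{1,0}\oplus\Wedge^{0,1}$, the observation that the curvature of $\pi^*D$ is $\pi^*\kappa$ concentrated in $\Wedge^{2,0}$, exactness of $D$ in the $M$-directions, fibrewise integration (Poincar\'e lemma) in the $\bar M$-directions, and Proposition~\ref{product-prop} for the direct sum. The sequencing, however, is genuinely different. The paper first reduces ``without loss of generality'' to the case where $\psi$ lies purely in $\Wedge^{0,1}\otimes\pi^*E$ and is $\pi^*D^\wedge$-closed, which amounts to applying the exactness of $D$ in a parametrised way over the whole $\bar M$-factor (so that the primitive in the $M$-directions varies smoothly with the $\bar M$-variable), and then integrates along a fixed fibre curve, differentiating under the integral sign to see that the result is $D_a$-parallel. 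You instead invoke exactness of $D$ only on the single slice $y=\bar p_0$ to get $\sigma_0$, extend by the fibrewise Poincar\'e lemma with initial value $\sigma_0$, and then propagate the remaining equation $(\pi^*D)_a\sigma=\psi_a$ off the base slice using the $dx\wedge dy$ compatibility together with $[\partial_i,(\pi^*D)_a]=0$; since $\partial_i\bigl((\pi^*D)_a\sigma-\psi_a\bigr)=(\pi^*D)_a\psi_i-\partial_i\psi_a=0$ and the difference vanishes at $y=\bar p_0$, it vanishes identically. The payoff of your ordering is that it sidesteps the smooth-dependence-on-parameters point implicit in the paper's WLOG step, at the cost of having to carry the general (mixed-type) $\psi$ through the construction; the paper's route makes the final integration step cleaner. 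Both arguments are local and both conclude the direct-sum statement identically from Proposition~\ref{product-prop}.
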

\begin{proof}
First we consider $\pi^*D $ on $\pi^*E$ over $M\times \bar M$.
	With the usual conventions, we split the one-forms on $M\times \bar M$ as the sum of the one-forms on the factors, $\Wedge^1=
	\Wedge^{1,0}\+	\Wedge^{0,1}$ and similarly for the two-forms, $	\Wedge^2=	\Wedge^{2,0}\+	\Wedge^{1,1}\+	\Wedge^{0,2}$.
	We also use unbarred and barred Latin indices for the $\Wedge^{1,0}$ and  $\Wedge^{0,1}$ components, and as before Greek indices for the $E$ components.
	Then
	we are required to consider the
diagram
\begin{center}\begin{picture}(160,60)
\put(0,30){\makebox(0,0){$\pi^*E$}}
\put(70,40){\makebox(0,0){$\Wedge^{1,0}\otimes\pi^*E$}}
\put(70,20){\makebox(0,0){$\Wedge^{0,1}\otimes\pi^*E$}}
\put(160,50){\makebox(0,0){$\Wedge^{2,0}\otimes\pi^*E$}}
\put(160,30){\makebox(0,0){$\Wedge^{1,1}\otimes\pi^*E$}}
\put(160,10){\makebox(0,0){$\Wedge^{0,2}\otimes\pi^*E$.}}
\put(17,32){\vector(3,1){20}}
\put(17,28){\vector(3,-1){20}}
\put(105,42){\vector(3,1){20}}
\put(105,38){\vector(3,-1){20}}
\put(105,22){\vector(3,1){20}}
\put(105,18){\vector(3,-1){20}}
\end{picture}\end{center}
The composition along the top is
$\pi^*\kappa:\pi^*E\to\Wedge^{2,0}\otimes\pi^*E$ and exactness of $D_a$ implies
that
$$\pi^*E\longrightarrow\Wedge^{1,0}\otimes\pi^*E\longrightarrow
\frac{\Wedge^{2,0}\otimes\pi^*E}{\pi^*\kappa(\pi^*E)}$$
is locally exact.  Without loss of generality, we may therefore assume that we
are given just $\psi_{\bar a}{}^\alpha\in\Gamma(\Wedge^{0,1}\otimes\pi^*E)$ in the kernel of $\pi^*D^\wedge$, i.e.~such that
$$D_a\psi_{\bar a}{}^\alpha=0\quad\mbox{and}\quad
\partial_{[\bar a}\psi_{\bar b]}{}^\alpha=0,$$
and we want locally to find $\phi^\alpha\in\Gamma(\pi^*E)$ such that
$$D_a\phi^\alpha=0\quad\mbox{and}\quad
\partial_{\bar a}\phi^\alpha=\psi_{\bar a}{}^\alpha.$$
Our second assumption $\partial_{[\bar a}\psi_{\bar b]}{}^\alpha=0$ says that
$\psi_{\bar a}{}^\alpha$ is closed as a $1$-form along the fibres of~$\pi$.  At
any particular point $p\in M$ this form has values in $E_p$, the fibre of $E$
over~$p$, but this plays no r\^ole in concluding that locally we may write
$\psi_{\bar a}{}^\alpha=\partial_{\bar a}\phi^\alpha$.  Indeed, following
standard procedure, we may construct $\phi^\alpha$ as the integral along a
curve $\gamma\hookrightarrow\pi^{-1}(p)$, the only ambiguity being the choice
of basepoint from which to start this curve.  Moreover, since we are working on
a product manifold $M\times\bar{M}$, we can use the same
$\gamma\hookrightarrow\bar{M}$ as $p\in M$ varies.  Therefore we may
`differentiate under the integral sign' to conclude that $D_a\phi^\alpha=0$, as
required.
The same proof applies to $\bar\pi^*\bar D$ and the final result follows from Proposition~\ref{product-prop} applied to $E_1=\pi^*E$ and $E_2=\bar\pi^*\bar E$, both over $M\times \bar M$.
\end{proof}

Generally, given an arbitrary subbundle \( F \) that is parallel for \( \connection \), the connection both restricts to a connection on \( F \) and descends to a well defined connection on the quotient \( E / F \).
In the following we denote both the restriction and quotient connections by \( \connection \), and use \( \kappa \) for their respective curvatures.
\begin{proposition}\label{thm: sub-quotient exactness}
  Given a connection \( (E, \connection) \) and a parallel sub-bundle \( F \), if \( F \) and \( E / F \) are exact, and if the curvature on \( E/F \) is injective, then \( E \) is exact.
\end{proposition}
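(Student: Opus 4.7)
The plan is to exploit the short exact sequence $0 \to F \to E \to E/F \to 0$ of connections (with $F$ parallel), reducing exactness of $E$ to exactness of the two end terms via a two-step lifting procedure.

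Start with an arbitrary $\phi \in \Gamma(\Wedge^1 \otimes E)$ satisfying $\dconn \phi = \kappa(\psi)$ for some $\psi \in \Gamma(E)$; the goal is to produce, locally, a section of $E$ whose image under $\connection$ is~$\phi$. Let $\pi\colon E \to E/F$ be the quotient map and write $\bar\phi = \pi(\phi)$, $\bar\psi = \pi(\psi)$. Since $\pi$ intertwines $\connection$ with the induced quotient connection (which I will also denote by $\connection$), one obtains $\dconn \bar\phi = \kappa(\bar\psi)$ on $E/F$. By the assumed exactness of $E/F$ and Definition~\ref{exact_connection}, there exists locally $\bar\eta \in \Gamma(E/F)$ with $\connection \bar\eta = \bar\phi$.

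Next, lift $\bar\eta$ to a local section $\eta \in \Gamma(E)$ (possible because $E \to E/F$ admits local splittings). Then $\pi(\phi - \connection \eta) = \bar\phi - \connection\bar\eta = 0$, so $\phi - \connection\eta \in \Gamma(\Wedge^1 \otimes F)$. Applying $\dconn$ gives
\[ \dconn(\phi - \connection \eta) = \kappa(\psi) - \kappa(\eta) = \kappa(\psi - \eta) . \]
The left-hand side lies in $\Gamma(\Wedge^2 \otimes F)$, so projecting to the quotient yields $\kappa_{E/F}(\pi(\psi - \eta)) = 0$. Here the hypothesis that the curvature on $E/F$ is \emph{injective} enters crucially: it forces $\pi(\psi - \eta) = 0$, i.e., $\psi - \eta \in \Gamma(F)$.

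Therefore $\phi - \connection\eta \in \Gamma(\Wedge^1 \otimes F)$ and $\dconn(\phi - \connection \eta) = \kappa|_F(\psi - \eta)$ lies in the image of the curvature of $\connection|_F$. By the assumed exactness of $F$, there exists locally $\sigma \in \Gamma(F)$ with $\connection \sigma = \phi - \connection \eta$, and then $\connection(\sigma + \eta) = \phi$, as required. The only genuinely delicate step is the one using injectivity of $\kappa_{E/F}$ to control how much the lift $\eta$ fails to match $\psi$ modulo~$F$; everything else is bookkeeping between the short exact sequence of bundles and its induced maps on curvatures and de~Rham differentials.
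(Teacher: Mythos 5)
Your argument is correct, and it takes a somewhat different route from the paper's. You verify Definition~\ref{exact_connection} directly by a two-step lifting: push $\phi$ and $\psi$ to $E/F$, solve there by exactness of the quotient, lift the solution, and then use injectivity of the quotient curvature to conclude that the discrepancy $\psi-\eta$ lies in $\Gamma(F)$, so that the remaining $F$-valued problem $\dconn(\phi-\connection\eta)=\kappa|_F(\psi-\eta)$ is handled by exactness of $F$ (all steps are legitimate: $F$ parallel guarantees that $\dconn$ preserves $F$-valued forms and that $\kappa$ restricted to $\Gamma(F)$ is the curvature of $\connection|_F$, and $\kappa$ commutes with the projection to $E/F$). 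The paper argues instead at the level of $H^1$: it first uses Proposition~\ref{thm: prop isomorphism between H1 and ker D} to replace the data by a representative $\eta\in\Gamma(\Wedge^1\otimes E)$ with $\dconn\eta=0$, then observes that exactness of $E/F$ together with injectivity of its curvature makes $\dconn$ injective on $\Wedge^1\otimes(E/F)$ (as in Proposition~\ref{Dinj-prop}), forcing $\eta\in\Gamma(\Wedge^1\otimes F)$, after which exactness of $F$ finishes. The hypotheses enter in the same roles in both proofs; your version is more elementary and self-contained in that it never invokes the homology normalization and uses the curvature injectivity hypothesis directly rather than its consequence for $\dconn$, at the cost of carrying $\psi$ and the chosen lift $\eta$ through the bookkeeping that the paper's normalization collapses into a single step.
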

\begin{proof}

  Given \( \eta \in H^1(E,\connection) \), fix a representative section \( \eta \) of \( \Wedge^1 \otimes E \) such that \( \dconn(\eta) = 0 \).
  Descending to the quotient \( E/F \), we have that \( \dconn([\eta]) = [\dconn(\eta)] = 0 \).
  By the hypotheses on \( E/F \) and Proposition \ref{thm: prop isomorphism between H1 and ker D} it follows \( \dconn \) is injective on \( E/F \), so that \( [\eta] = 0 \).
  In other words, \( \eta \in \Gamma(\Wedge^1 \otimes F) \).
  From exactness of \( F \) it follows that \( \eta \) is in the range of \( \connection|_F \), which suffices.
\end{proof}

\subsection{Genericity}
\label{gensec}
Here is a sufficient condition for a connection to be exact.
\begin{definition}
  A connection $\connection$ is
 \emph{generic} if \( \kappa^{(1)} \colon \Wedge^1 \otimes E \to \Wedge^3 \otimes E\) is injective.
\end{definition}
For connections on any manifold of dimension more than 3, one expects that a connection will be generic by dimensional considerations.
In dimensions 2 and 3, no connection satisfies this genericity condition.

\begin{lemma}\label{genlemma}\label{prop-generic}
  Suppose that a connection \( (E, \connection) \) is generic. Then
  \begin{enumerate}
    \item
    \( \connection \) has injective  curvature, and

    \item
    \( \connection \) is exact.
  \end{enumerate}
\end{lemma}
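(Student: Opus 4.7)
The plan is to exploit the injectivity hypothesis on $\kappa^{(1)}$ in two different ways, one for each conclusion. Both parts follow quickly once one unpacks the explicit formula $\kappa^{(1)}(\phi_c{}^\alpha)=\kappa_{[ab}{}^\alpha{}_{|\beta|}\phi_{c]}{}^\beta$ and uses the identity $\kappa^{(1)}=(\dconn)^2$ on $\Wedge^1\otimes E$ recorded earlier.

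For part~(1) I would argue pointwise on decomposable elements. Fix $p\in M$ and suppose $v^\alpha\in E_p$ lies in $\ker\kappa_p$. Choose any nonzero $\eta_a\in\Wedge^1_p$ and substitute $\phi_c{}^\alpha=\eta_c v^\alpha$ into the formula above; the result equals $\kappa_{[ab}{}^\alpha{}_{|\beta|}v^\beta\,\eta_{c]}$, which vanishes because the factor $\kappa_{ab}{}^\alpha{}_\beta v^\beta$ is already zero by hypothesis. Genericity then forces the decomposable tensor $\eta\otimes v$ to be zero, and since $\eta\neq 0$ one concludes $v^\alpha=0$. Hence $\kappa\colon E\to\Wedge^2\otimes E$ is injective at every point of $M$.

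For part~(2) I would observe that $\kappa^{(1)}$ coincides with $(\dconn)^2$ on $\Wedge^1\otimes E$. Therefore any $\phi\in\Gamma(\Wedge^1\otimes E)$ in the kernel of $\dconn$ automatically satisfies $\kappa^{(1)}(\phi)=\dconn(\dconn\phi)=0$, and the genericity hypothesis forces $\phi=0$. Thus $\dconn\colon\Wedge^1\otimes E\to\Wedge^2\otimes E$ is itself injective, and Proposition~\ref{Dinj-prop} then immediately yields that $\connection$ is exact.

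I do not anticipate any real obstacle; the only substantive verification is the elementary observation that on a decomposable tensor $\kappa^{(1)}(\eta\otimes v)$ is (up to a numerical factor) $\eta\wedge\kappa(v)$, which vanishes whenever $\kappa(v)=0$ and drives part~(1). Everything else is a straightforward unwinding of the definitions together with a citation of Proposition~\ref{Dinj-prop}.
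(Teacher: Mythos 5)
Your proposal is correct and follows essentially the same route as the paper: part (1) is the same computation that $\kappa^{(1)}$ annihilates any decomposable $\eta\otimes v$ with $\kappa(v)=0$, and part (2) uses the same inclusion $\ker(\dconn)\subseteq\ker(\kappa^{(1)})$, merely citing Proposition~\ref{Dinj-prop} instead of Proposition~\ref{thm: prop isomorphism between H1 and ker D} from which it derives.
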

\begin{proof}
  (1)
  For any decomposable \( \omega \otimes e \in \Gamma(\Wedge^1 \otimes E) \), the map \( \kappa^{(1)} \) is given by skewed product of \( \omega \) and the \( E \)-valued \( 2 \)-form \( \kappa(e) \).
  If $\kappa$ were not injective, then for any nonzero $e$ in its kernel and any 1-form \( \omega \), one would have \( \omega \otimes e \) in the kernel of \( \kappa^{(1)} \).

  (2)
  Due to the obvious
  \[ \ker\left( \dconn \right) \subseteq \ker(\kappa^{(1)}), \]
  exactness follows from Proposition \ref{thm: prop isomorphism between H1 and ker D}.
\end{proof}

Genericity provides a simple test for exactness of a connection.
On the other hand, there are examples of interest that are exact but not generic, such as any exact connection on a 3-dimensional manifold, or the connection defined on Cahen--Wallach spaces below.
The following sections develop more general theory, able to treat these examples.

\subsection{Augmented curvature}\label{subsection_augmented}

Let us return to the de~Rham sequence,
and the necessary condition for
$\phi\in\Gamma(\Wedge^1\otimes E)$ to be in the range of
$D:E\to\Wedge^1\otimes E$, namely that $\dconn\phi$ be in the range of
$\kappa \colon E\to\Wedge^2\otimes E$. This condition is useful
because the curvature $\kappa$ is a homomorphism of vector bundles,
and its range can usually be identified explicitly as a subbundle of
$\Wedge^2\otimes E$.
However, we have seen that the condition is not always sufficient.

There is, however, an equally canonical tensorial bundle $\Delta^2$, equipped with
\begin{itemize}
\item a canonical surjection $\Delta^2\to\Wedge^2$, and
\item a canonically defined linear differential operator
${\mathcal{D}}:\Wedge^1\otimes E\to\Delta^2\otimes E$, so that
\item the composition $\Wedge^1\otimes E\xrightarrow{\,{\mathcal{D}}\,}
\Delta^2\otimes E\to\Wedge^2\otimes E$ is just~$\dconn$, and
\item the composition $E\xrightarrow{\,\D\,}\Wedge^1\otimes E
\xrightarrow{\,{\mathcal{D}}\,}\Delta^2\otimes E$ is a homomorphism of
vector bundles.
\end{itemize}
Since  \( \Delta^2 \) is larger than \( \Wedge^2 \), this construction provides a possibly stronger necessary condition
for $\phi\in\Gamma(\Wedge^1\otimes E)$ to be in the range of~$D$, and therefore
has the potential to provide a necessary and sufficient condition.
The only price to pay is that the operator ${\mathcal{D}}:\Wedge^1\otimes E\to\Delta^2\otimes E$ is second order.

The bundle $\Delta^2$ is most cleanly defined via jets. Recall the first jet
exact sequence for~$\Wedge^2$,
\begin{equation}\label{J1Wedge2}
0 \to \Wedge^1\otimes\Wedge^2\to J^1\Wedge^2\to\Wedge^2\to 0,\end{equation}
and note that bundle $\Wedge^1\otimes\Wedge^2$ canonically splits into \( \GL \)-irreducibles,
$$\Wedge^1\otimes\Wedge^2=\Wedge^3\oplus\;
\begin{picture}(12,12)(0,1)
\put(0,0){\line(1,0){6}}
\put(0,6){\line(1,0){12}}
\put(0,12){\line(1,0){12}}
\put(0,0){\line(0,1){12}}
\put(6,0){\line(0,1){12}}
\put(12,6){\line(0,1){6}}
\end{picture}\, ,$$
where $\begin{picture}(12,12)(0,1)
\put(0,0){\line(1,0){6}}
\put(0,6){\line(1,0){12}}
\put(0,12){\line(1,0){12}}
\put(0,0){\line(0,1){12}}
\put(6,0){\line(0,1){12}}
\put(12,6){\line(0,1){6}}
\end{picture}$ denotes the tensors $\psi_{abc}$ in $\Wedge^1\otimes\Wedge^2$ such that $\psi_{[abc]}=0$ (for the use of Young tableaux, see \cite{fulton-harris}).
Therefore, we may form the quotient bundle
$\Delta^2\equiv J^1\Wedge^2/\Wedge^3$, which fits into a short exact sequence
$$0\to\begin{picture}(12,12)(0,1)
\put(0,0){\line(1,0){6}}
\put(0,6){\line(1,0){12}}
\put(0,12){\line(1,0){12}}
\put(0,0){\line(0,1){12}}
\put(6,0){\line(0,1){12}}
\put(12,6){\line(0,1){6}}
\end{picture}\to\Delta^2\to\Wedge^2\to 0.$$
The exterior derivative $d:\Wedge^1\to\Wedge^2$ gives rise to a canonical `not-yet-coupled'
second order linear differential operator ${\mathcal{D}}:\Wedge^1\to\Delta^2$,
defined as the composition
$$\Wedge^1\xrightarrow{\,j^1\d}J^1\Wedge^2\to\Delta^2.$$
Any torsion-free connection $\nabla_a$ on $\Wedge^1$ induces a connection on
$\Wedge^2$, equivalently a splitting of~(\ref{J1Wedge2}), and such a choice
enables us to write down an explicit formula for ${\mathcal{D}}$, namely
$$\Wedge^1\ni\omega_c\longmapsto
\left[\!\begin{array}{c}\nabla_{[b}\omega_{c]}\\
\nabla_a\nabla_{[b}\omega_{c]}
\end{array}\!\right]\in\Delta^2.$$
That $\nabla_a$ is torsion-free implies $\nabla_{[a}\nabla_b\omega_{c]}=0$, so
$\nabla_a\nabla_{[b}\omega_{c]}$ already lies in
$\begin{picture}(12,12)(0,1)
\put(0,0){\line(1,0){6}}
\put(0,6){\line(1,0){12}}
\put(0,12){\line(1,0){12}}
\put(0,0){\line(0,1){12}}
\put(6,0){\line(0,1){12}}
\put(12,6){\line(0,1){6}}
\end{picture}$, and nothing is lost by passing to the quotient
$J^1\Wedge^2\to\Delta^2$. In any case, we obtain a locally exact sequence
$$0\to{\mathbb{R}}\to\Wedge^0\xrightarrow{\,\d\,}\Wedge^1
\xrightarrow{\,{\mathcal{D}}\,}\Delta^2$$
and it remains to write down a coupled
version~${\mathcal{D}}:\Wedge^1\otimes E\to\Delta^2\otimes E$.
We adopt the following formula for the coupled version of~${\mathcal{D}}$, with \( \kappa \) as in Equation \eqref{eq-curvature of connection},
$$\Wedge^1\otimes E\ni\phi_c{}^\alpha\stackrel{{\mathcal{D}}}{\longmapsto}
\left[\!\begin{array}{c}\D_{[b}\phi_{c]}{}^\alpha\\
\D_a\D_{[b}\phi_{c]}{}^\alpha-\kappa_{bc}{}^\alpha{}_\beta\phi_a{}^\beta
\end{array}\!\right]\in\Delta^2\otimes E,$$
where the second line vanishes on totally skewing \( abc \), so lies in
$\,\begin{picture}(12,12)(0,1)
\put(0,0){\line(1,0){6}}
\put(0,6){\line(1,0){12}}
\put(0,12){\line(1,0){12}}
\put(0,0){\line(0,1){12}}
\put(6,0){\line(0,1){12}}
\put(12,6){\line(0,1){6}}
\end{picture}\,$ as required.
It is readily verified that this operator ${\mathcal{D}}$
does not depend on choice of torsion-free affine connection.
\begin{proposition}\label{thm: prop augmented curvature formula}The composition
$$E\xrightarrow{\,\D\,}\Wedge^1\otimes E\xrightarrow{\,{\mathcal{D}}\,}
\Delta^2\otimes E$$
is a homomorphism of vector bundles, given explicitly by
$$\phi^\alpha\longmapsto
\left[\!\begin{array}{c}\kappa_{bc}{}^\alpha{}_\beta\phi^\beta\\
(\D_a\kappa_{bc}{}^\alpha{}_\beta)\phi^\beta
\end{array}\!\right].$$
\end{proposition}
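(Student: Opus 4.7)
The plan is direct: compute the composition explicitly by substituting $\phi_c{}^\alpha := D_c\phi^\alpha$ into the defining formula for $\mathcal{D}$ and simplify using the curvature identity~(\ref{eq-curvature of connection}). Since the proposed right-hand side involves no derivatives of $\phi^\alpha$, tensoriality (hence the homomorphism claim) follows automatically once the computation goes through, so the entire proof reduces to verifying the two component formulas.

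For the first slot, substitution gives $D_{[b}\phi_{c]}{}^\alpha = D_{[b}D_{c]}\phi^\alpha$, which by the definition of curvature (with the paper's convention that $[\cdot]$ contains the factor of $\tfrac12$) is precisely $\kappa_{bc}{}^\alpha{}_\beta\phi^\beta$, matching the top component. For the second slot, substitution yields $D_a D_{[b}D_{c]}\phi^\alpha - \kappa_{bc}{}^\alpha{}_\beta D_a\phi^\beta$. Rewriting the first term via the curvature identity as $D_a(\kappa_{bc}{}^\alpha{}_\beta\phi^\beta)$ and expanding by the Leibniz rule produces $(D_a\kappa_{bc}{}^\alpha{}_\beta)\phi^\beta + \kappa_{bc}{}^\alpha{}_\beta D_a\phi^\beta$. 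The second summand cancels exactly against the correction term $-\kappa_{bc}{}^\alpha{}_\beta\phi_a{}^\beta$ built into the definition of $\mathcal{D}$, leaving $(D_a\kappa_{bc}{}^\alpha{}_\beta)\phi^\beta$, as claimed.

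There is no real obstacle; the computation is a one-line Leibniz calculation. The conceptual content is that the correction term $-\kappa_{bc}{}^\alpha{}_\beta\phi_a{}^\beta$ in the formula for $\mathcal{D}$ is designed precisely to absorb the $\kappa\!\cdot\!D\phi$ term that Leibniz produces, so that $\mathcal{D}\circ D$ loses all derivatives of~$\phi^\alpha$. As a consistency check, the resulting section does land in the claimed subbundle: totally skewing $(D_a\kappa_{bc}{}^\alpha{}_\beta)\phi^\beta$ over $abc$ vanishes by the Bianchi identity $D_{[a}\kappa_{bc]}{}^\alpha{}_\beta = 0$ (already noted in the text after Equation~(\ref{eq: pre-Bianchi})), which is exactly the constraint cutting out the $\Wedge^1\otimes\Wedge^2$-subsummand of $\Delta^2\otimes E$.
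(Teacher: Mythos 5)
Your computation is correct and is exactly the ``straightforward computation'' the paper leaves to the reader: substituting $\phi_c{}^\alpha=\D_c\phi^\alpha$ into the definition of $\mathcal{D}$, identifying $\D_{[b}\D_{c]}\phi^\alpha$ with $\kappa_{bc}{}^\alpha{}_\beta\phi^\beta$, and letting the Leibniz term cancel against the built-in correction $-\kappa_{bc}{}^\alpha{}_\beta\phi_a{}^\beta$. The concluding consistency check via the Bianchi identity is also sound (the paper runs that observation in the opposite direction, re-deriving Bianchi from the proposition, but both are fine since the identity was already established after Equation~(\ref{eq: pre-Bianchi})).
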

\begin{proof}A straightforward computation.\end{proof}
\noindent Notice that we have inadvertently proved the Bianchi identity a second time:
\begin{corollary}We always have $(\D_a\kappa_{bc}{}^\alpha{}_\beta)\phi^\beta\in
\Gamma\big(\,\begin{picture}(12,12)(0,1)
\put(0,0){\line(1,0){6}}
\put(0,6){\line(1,0){12}}
\put(0,12){\line(1,0){12}}
\put(0,0){\line(0,1){12}}
\put(6,0){\line(0,1){12}}
\put(12,6){\line(0,1){6}}
\end{picture}\otimes E\big)$, equivalently $\D_{[a}\kappa_{bc]}{}^\alpha{}_\beta=0$.
\end{corollary}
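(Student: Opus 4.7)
The plan is to extract the second Bianchi identity directly from Proposition~\ref{thm: prop augmented curvature formula}. The key structural fact is that $\Delta^2$ was constructed as the quotient $J^1\Wedge^2/\Wedge^3$, so the kernel of its canonical surjection onto $\Wedge^2$ is precisely the subbundle $\tyg{2,1}$ of $\Wedge^1\otimes\Wedge^2$ consisting of tensors $\psi_{abc}$ with $\psi_{abc}=\psi_{a[bc]}$ and $\psi_{[abc]}=0$. Equivalently, in the explicit formula for the coupled operator $\mathcal{D}$ recorded above, the second row was already observed to lie in $\tyg{2,1}\otimes E$.

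First I would invoke Proposition~\ref{thm: prop augmented curvature formula} to identify the composition $E\xrightarrow{\,\D\,}\Wedge^1\otimes E\xrightarrow{\,\mathcal{D}\,}\Delta^2\otimes E$ as the homomorphism
\[
\phi^\alpha\longmapsto\left[\!\begin{array}{c}\kappa_{bc}{}^\alpha{}_\beta\phi^\beta\\ (\D_a\kappa_{bc}{}^\alpha{}_\beta)\phi^\beta\end{array}\!\right].
\]
By the structural observation above, the bottom component $(\D_a\kappa_{bc}{}^\alpha{}_\beta)\phi^\beta$ is forced to be a section of $\tyg{2,1}\otimes E$; that is, its totally skew part in the indices $abc$ vanishes. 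Hence $\D_{[a}\kappa_{bc]}{}^\alpha{}_\beta\phi^\beta=0$ for every section $\phi^\beta$ of~$E$, and since $\phi^\beta$ is arbitrary the coefficient itself must vanish, giving $\D_{[a}\kappa_{bc]}{}^\alpha{}_\beta=0$.

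There is really no obstacle here, as Proposition~\ref{thm: prop augmented curvature formula} has done the real work. The only prerequisite, already in place in the construction of $\mathcal{D}$, is the verification that the second component of $\mathcal{D}(\phi_c{}^\alpha)$ genuinely lands in the hook subbundle $\tyg{2,1}\otimes E$ and not merely in $\Wedge^1\otimes\Wedge^2\otimes E$; this rests on the torsion-freeness of the auxiliary connection on $\Wedge^1$ together with the correction term $-\kappa_{bc}{}^\alpha{}_\beta\phi_a{}^\beta$ deliberately built into~$\mathcal{D}$. With that understood, the Bianchi identity drops out purely formally, and no further computation is needed.
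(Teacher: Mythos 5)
Your proposal is correct and matches the paper's intended argument: the corollary is an immediate consequence of Proposition~\ref{thm: prop augmented curvature formula} together with the fact, built into the construction of $\mathcal{D}$, that its second component lies in the hook-symmetric subbundle, so the bottom entry $(\D_a\kappa_{bc}{}^\alpha{}_\beta)\phi^\beta$ of the augmented curvature has vanishing totally skew part, and since the augmented curvature is a bundle homomorphism and $\phi^\beta$ is arbitrary, $\D_{[a}\kappa_{bc]}{}^\alpha{}_\beta=0$. This is exactly how the paper reads the identity off the proposition, so nothing further is needed.
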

\begin{definition}Call the homomorphism $E\to\Delta^2\otimes E$ the \emph{augmented curvature\/} of the connection~$\D$.
\end{definition}
\noindent We have shown that the augmented curvature is invariantly defined, and it is clear that the
composition
\[ \begin{tikzcd}
E \xrightarrow{\quad} \Delta^2\otimes E \xrightarrow{\quad} \Wedge^2\otimes E
\end{tikzcd} \]
is the ordinary curvature \( \kappa \). It follows that
$$\ker(\mbox{{augmented curvature}})\subseteq\ker(\mbox{ordinary curvature}) .$$
The immediate question of equality here is answered by the first case of
Lemma~\ref{dkappa-lemma} below.

Any section \( \phi_a{}^{\alpha} \) of \( \Wedge^1 \otimes E \) of the form \( \connection_a \phi^\alpha \) for some \( \phi^\alpha \) necessarily satisfies the condition \( \mathcal{D}(\phi_a{}^{\alpha})
= \mathcal{D}\connection(\phi^{\alpha}) \), which in expanded form is
\begin{equation}\label{eq-augmented curvature necessary condition}
  \left[\!\begin{array}{c}\D_{[b}\phi_{c]}{}^\alpha \\
  \D_a\D_{[b}\phi_{c]}{}^\alpha-\kappa_{bc}{}^\alpha{}_\beta\phi_a{}^\beta
  \end{array}\!\right]
  = \left[\!\begin{array}{c}\kappa_{bc}{}^\alpha{}_\beta\phi^\beta \\
  (\D_a\kappa_{bc}{}^\alpha{}_\beta)\phi^\beta
  \end{array}\!\right] .
\end{equation}
As with the condition provided by curvature (Equation \eqref{eq-curature sufficient condition}), this provides an algebraic criterion for being in the range of \( \connection \), and one potentially stronger.
Indeed, in case that the kernel \( E_0 \) of curvature is parallel, the augmented curvature provides a complete characterisation of the range of \( \connection \).
\begin{proposition}\label{complete_characterization}
  Suppose given a connection \( \connection \) with parallel curvature kernel \( E_0 \).
  Any \( \phi_a{}^{\alpha} \in \Gamma(\Wedge^1 \otimes E) \) is in the range of \( \connection \) if and only if Equation (\ref{eq-augmented curvature necessary condition}) holds for some \( \phi^\alpha \in \Gamma(E) \).
\end{proposition}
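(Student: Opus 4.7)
The ``only if'' direction is immediate from Proposition~\ref{thm: prop augmented curvature formula}: if $\phi_a{}^\alpha = \D_a \phi^\alpha$, then $\mathcal{D}(\phi_a{}^\alpha) = \mathcal{D}\D(\phi^\alpha)$ coincides with the right-hand side of (\ref{eq-augmented curvature necessary condition}). The substance of the proposition lies in the converse.

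To prove it, my plan is to introduce the ``error'' section
\[ \psi_a{}^\alpha := \phi_a{}^\alpha - \D_a \phi^\alpha \in \Gamma(\Wedge^1 \otimes E) \]
and to show that locally $\psi = \D \tilde\phi$ for some $\tilde\phi \in \Gamma(E_0)$; from this, $\phi_a{}^\alpha = \D_a(\phi^\alpha + \tilde\phi^\alpha)$ exhibits $\phi_a{}^\alpha$ in the range of $\D$. The first row of (\ref{eq-augmented curvature necessary condition}) reads $\D_{[b}\phi_{c]}{}^\alpha = \kappa_{bc}{}^\alpha{}_\beta \phi^\beta$, which immediately gives $\dconn \psi = 0$. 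For the second row, I would apply $\D_a$ to the first row and use the Leibniz rule to get
\[ \D_a \D_{[b}\phi_{c]}{}^\alpha = (\D_a \kappa_{bc}{}^\alpha{}_\beta)\phi^\beta + \kappa_{bc}{}^\alpha{}_\beta \D_a \phi^\beta, \]
and subtract this from the second row of (\ref{eq-augmented curvature necessary condition}) to conclude
\[ \kappa_{bc}{}^\alpha{}_\beta \psi_a{}^\beta = 0. \]
In particular, $\psi$ takes values in the sub-bundle $\Wedge^1 \otimes E_0$.

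The parallelism hypothesis now does the decisive work: since $E_0$ is parallel, $\D$ restricts to a connection $\D|_{E_0}$ on $E_0$, whose curvature $\kappa|_{E_0}$ vanishes identically by the very definition of $E_0$. Thus $\D|_{E_0}$ is flat, and so the coupled de~Rham complex for $(E_0, \D|_{E_0})$ is locally exact, as already recorded for flat connections. Since $\psi \in \Gamma(\Wedge^1 \otimes E_0)$ is $\dconn$-closed, it follows locally that $\psi = \D \tilde\phi$ for some $\tilde\phi \in \Gamma(E_0)$, which finishes the argument. The only step that calls for care is the algebraic extraction of $\kappa_{bc}{}^\alpha{}_\beta \psi_a{}^\beta = 0$ from the second row of (\ref{eq-augmented curvature necessary condition}); the rest is a direct appeal to the Leibniz rule, to parallelism of $E_0$, and to the local exactness of the coupled de~Rham complex in the flat case.
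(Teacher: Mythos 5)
Your proposal is correct and follows essentially the same route as the paper's proof: both derive $\kappa_{bc}{}^\alpha{}_\beta(\phi_a{}^\beta-\D_a\phi^\beta)=0$ by feeding the first row of (\ref{eq-augmented curvature necessary condition}) back into the second, place the difference $\psi_a{}^\alpha=\phi_a{}^\alpha-\D_a\phi^\alpha$ in $\Gamma(\Wedge^1\otimes E_0)$, and then use parallelism of $E_0$ to invoke local exactness of the flat restricted connection on the $\dconn$-closed section $\psi$.
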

\begin{proof}
  Supposing that Equation \eqref{eq-augmented curvature necessary condition} holds, we have in particular that
  \[ \D_{[b}\phi_{c]}{}^\alpha = \kappa_{bc}{}^\alpha{}_\beta\phi^\beta . \]
  Feeding this back into \eqref{eq-augmented curvature necessary condition} leads to the conclusion that
  \[ \left[\!\begin{array}{c}\kappa_{bc}{}^\alpha{}_\beta\phi^\beta\\
  (\D_a\kappa_{bc}{}^\alpha{}_\beta)\phi^\beta
  \end{array}\!\right]
  +
  \left[\!\begin{array}{c}0\\
  \kappa_{bc}{}^\alpha{}_\beta(\D_a\phi^\beta-\phi_a{}^\beta)
  \end{array}\!\right] = \left[\!\begin{array}{c}\kappa_{bc}{}^\alpha{}_\beta\phi^\beta \\
  (\D_a\kappa_{bc}{}^\alpha{}_\beta)\phi^\beta
  \end{array}\!\right] \]
  and thus that
  \[ \kappa_{bc}{}^\alpha{}_\beta(\D_a\phi^\beta-\phi_a{}^\beta) = 0 . \]

  Let \( \psi_a{}^{\alpha} = \phi_a{}^\beta - \D_a\phi^\beta \), which on the one hand has just been seen to be an element of \( \Wedge^1 \otimes E_0 \),
  and on the other hand is in the range of \( \connection \) if and only if \( \phi_a{}^{\alpha} \) is.
  But \( E_0 \) is assumed parallel, so \( \connection \) restricts to a flat connection on \( E_0 \).
  It is easily checked that \( \psi_a{}^{\alpha} \) satisfies \( \dconn \psi_a{}^{\alpha} = 0 \),  so we may conclude that \( \psi_a{}^{\alpha} = \connection_a \psi^{\alpha} \) for some \( \psi^{\alpha} \).
\end{proof}

\subsubsection{An example: the round sphere}
A good place to see augmented curvature in action is on semi-Riemannian manifolds with constant sectional curvature. For simplicity, we consider  the unit $n$-sphere.
Let us take $E$ to be its tangent bundle and recall that the curvature tensor
for the Levi-Civita connection $\nabla_a$ is given by half the Rieman\-nian curvature,
$$\kappa_{ab}{}^c{}_d=\tfrac{1}{2}\delta_a{}^cg_{bd}-\tfrac{1}{2}\delta_b{}^cg_{ad},$$
where $g_{ab}$ is the metric. It follows that
$$\phi_c{}^d\stackrel{{\mathcal{D}}}{\longmapsto}
\left[\!\begin{array}{c}\nabla_{[b}\phi_{c]}{}^d\\
\nabla_a\nabla_{[b}\phi_{c]}{}^d+\phi_{a[b}\delta_{c]}{}^d
\end{array}\!\right],$$
whilst the augmented curvature is given by
$$\phi^d\longmapsto
\left[\!\begin{array}{c}\delta_{[b}{}^d\phi_{c]}\\
0\end{array}\!\right].$$
As such, in order locally to write a tensor
$\phi_c{}^d\in\Gamma(\Wedge^1\otimes TS^n)$ as $\nabla_c\phi^d$ for some
vector field~$\phi^d$, it is firstly necessary that
\begin{equation}\label{nec}
\nabla_{[b}\phi_{c]}{}^d=\delta_{[b}{}^d\phi_{c]}\end{equation} for some
$1$-form $\phi_c$ and, if this is the case, then a further necessary but also
sufficient condition is that
\begin{equation}\label{nec_and_suff}
\nabla_a\nabla_{[b}\phi_{c]}{}^d+\phi_{a[b}\delta_{c]}{}^d=0.\end{equation}
Notice that in two dimensions (\ref{nec}) is vacuous, but (\ref{nec_and_suff})
is non-trivial in any dimension, as can be seen by taking
$\phi_c{}^d=\delta_c{}^d$.

\section{Non-injective curvature and the curvature filtration}\label{sec-curvature filtration}
To deal with connections for which the curvature of $\D$ is not injective,
we build an ascending filtration of \( E \) from the generalised kernels of \( \kappa \): let
\[ E_0 = \ker\bigl( \kappa \colon E \xrightarrow{\quad} \Wedge^2 \otimes E \bigr) \]
 and inductively
\[ E_{r+1} = \left\{ \eta	 \in E \colon \kappa (\eta) \in \Wedge^2 \otimes E_r \right\} . \]
We call this the \emph{curvature filtration}.
Since $E$ is of finite rank, the sequence stabilises, say at \( r = R \) and we have  $E_R=E_{R+1}=\ldots$.
For notational consistency, take \( E_{-1} = 0 \).
We will continue to assume  that \( \kappa \) is of sufficiently constant rank that each \( E_i \) is a subbundle of \( E \).

We will be interested in connections whose curvature filtrations are parallel.
In such case, the quotient \( E/E_R \) with induced connection has injective curvature, or else \( E_R \) would not be maximal.
Note that there do exist connections for which the curvature filtration is not parallel.
For example, on the trivial rank 2 bundle over \( \R^2 \), fix co\"ordinates \( x_1, x_2 \), as well as independent sections \( e_1, e_2 \), and let \( \connection \) be the connection defined by
\begin{align*}
  \connection e_1 & = \d x_1 \otimes e_2 \\
  \connection e_2 & = x_2\d x_1 \otimes e_1 .
\end{align*}
It is straightforward to check that the kernel of curvature here is the subbundle spanned by \( e_1 \) and that \( \connection \) does not preserve this subbundle.

Another such example is a semi-Rieman\-nian manifold $(M,g)$ that admits a
homothetic vector field, i.e.~a vector field $X$ such that $\nabla
X=\mathrm{Id}$, where $\nabla$ is the Levi-Civita connection.  An example of
such a situation is given by the Euler vector field of the Rieman\-nian cone over
a Rieman\-nian manifold, where we assume that the cone is not flat.  Clearly, the
Rieman\-nian curvature tensor satisfies $X\hook R=0$ and $E_0=\R\cdot X$, but
$E_0$ is not parallel.

Before providing a characterization of connections with parallel curvature filtration (Lemma \ref{dkappa-lemma}), we turn to the consequences of having such.
So, assume given a connection \( \connection \) whose curvature filtration is parallel.
Then $\connection$ descends to each sub-quotient \( E_{r+1}/E_r \).
By construction, the following diagram commutes for each \( r \ge 0 \),
\[ \begin{tikzcd}
  E_{r} \ar[r, "\kappa"] \ar[d] & \Wedge^2 \otimes E_{r} \ar[d] \\
  E_{r} / E_{r-1} \ar[r, "\kappa"] & \Wedge^2 \otimes E_{r} / E_{r-1}
\end{tikzcd} \]
so the curvature on \( E_{r} / E_{r-1} \) is equal to $\kappa$ modulo \( \Wedge^2 \otimes E_{r-1} \).
But, by definition, the restriction of \( \kappa \) to $E_{r}$ takes values in \( \Wedge^2 \otimes E_{r-1} \), so in fact the induced connection on each bundle \( E_{r}/E_{r-1} \) is flat.
Furthermore, for \( r \ge 1 \), the map
\[ \begin{tikzcd}
\kappa \colon E_{r} / E_{r-1} \ar[r] & \Wedge^2 \otimes E_{r-1} / E_{r-2}
\end{tikzcd} \]
is easily checked to be well defined and injective.
More is true.
\begin{lemma}\label{BensLemma1}
If the curvature filtration of \( \connection \) is parallel, then
  for each \( r \ge 1 \), the map
  \begin{equation}\label{eq: ker of connection}
    \ker\left( \Wedge^1 \otimes E_{r} / E_{r-1} \xrightarrow{\; \dconn \;} \Wedge^2 \otimes E_{r} / E_{r-1} \right)
    \xrightarrow{\; \dconn \;} \Wedge^2 \otimes E_{r-1} / E_{r-2}
  \end{equation}
  is injective.
\end{lemma}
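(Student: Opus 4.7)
The plan is to identify the map in question with the tensorial map $\bar\kappa\colon E_r/E_{r-1}\to\Wedge^2\otimes E_{r-1}/E_{r-2}$ canonically induced by $\kappa$ via the curvature filtration, and to exploit the fact that $\bar\kappa$ is injective essentially by definition.

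I would first note that since the filtration is parallel, the connection $\bar\connection$ induced on $E_r/E_{r-1}$ has curvature landing in $\Wedge^2\otimes E_{r-1}/E_{r-1}=0$ and so is flat. Consequently the coupled de~Rham sequence on $E_r/E_{r-1}$ is locally exact, and any section $[\phi]$ in the stated kernel can locally be written as $[\phi]=\bar\connection\bar\sigma$ for some $\bar\sigma\in\Gamma(E_r/E_{r-1})$. Choosing a lift $\sigma\in\Gamma(E_r)$ of $\bar\sigma$, I would use $\connection\sigma$ as a natural lift of $[\phi]$ to $\Wedge^1\otimes E_r$; then $\dconn(\connection\sigma)=\kappa(\sigma)\in\Wedge^2\otimes E_{r-1}$, and projecting to $\Wedge^2\otimes E_{r-1}/E_{r-2}$ presents the map as $[\phi]\mapsto\bar\kappa(\bar\sigma)$, where $\bar\kappa\colon E_r/E_{r-1}\to\Wedge^2\otimes E_{r-1}/E_{r-2}$ is the induced homomorphism.

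The crucial observation is that $\bar\kappa$ is injective: directly from the definition of $E_r$, any $\sigma$ with $\kappa(\sigma)\in\Wedge^2\otimes E_{r-2}$ must belong to $E_{r-1}$. Thus if the map sends $[\phi]$ to zero in $\Wedge^2\otimes E_{r-1}/E_{r-2}$, then $\bar\kappa(\bar\sigma)=0$, whence $\bar\sigma=0$ and $[\phi]=\bar\connection\bar\sigma=0$.

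The main subtlety I anticipate is controlling the dependence on the choice of lift of $[\phi]$: a different lift $\tilde\phi=\connection\sigma+\psi$ with $\psi\in\Wedge^1\otimes E_{r-1}$ contributes an extra $\dconn\psi$ to $\dconn\tilde\phi$, so the assignment $[\phi]\mapsto[\dconn\tilde\phi]$ is \emph{a priori} canonical only modulo the image of $\bar\dconn\colon\Wedge^1\otimes E_{r-1}/E_{r-2}\to\Wedge^2\otimes E_{r-1}/E_{r-2}$. Reconciling this with the statement of the lemma amounts to showing that the tensorial datum $\bar\kappa(\bar\sigma)$ already detects whether $[\phi]=0$, which is precisely what the injectivity of $\bar\kappa$ provides.
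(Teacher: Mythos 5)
Your argument is correct and essentially identical to the paper's: both use flatness of the induced connection on $E_r/E_{r-1}$ to write a section in the stated kernel as $\connection\psi$ for some $\psi\in\Gamma(E_r/E_{r-1})$, and then reduce injectivity to injectivity of the induced homomorphism $\kappa\colon E_r/E_{r-1}\to\Wedge^2\otimes E_{r-1}/E_{r-2}$, which follows directly from the definition of the curvature filtration. Your closing observation that the second $\dconn$ depends a priori on the choice of lift is a point the paper's proof passes over silently, but it does not alter the substance of the argument.
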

\begin{proof}
  The de~Rham complex of \( E_{r} / E_{r-1} \) is exact, so
  for \( \phi \) in the domain of \eqref{eq: ker of connection}, there exists \( \psi \in \Gamma(E_{r} / E_{r-1}) \) for which \( \connection \psi = \phi \).
  But if \( \dconn \phi = 0 \in \Gamma(\Wedge^2 \otimes E_{r-1} / E_{r-2}) \), then
  \[ \kappa(\psi) = \dconn \connection \psi = \dconn \phi = 0 \in \Gamma(\Wedge^2 \otimes E_{r-1} / E_{r-2}) , \]
  which is to say that \( \kappa(\psi) \in \Gamma(\Wedge^2 \otimes E_{r-2}) \), and so \( \psi \in \Gamma(E_{r-1}) \).
  Thus \( \phi = \connection \psi = 0 \in \Gamma(\Wedge^1 \otimes E_{r} / E_{r-1}) \).
\end{proof}

Lemma \ref{BensLemma1} has the following consequence, eliciting a case where it is a purely formal calculation to see that \( \connection \) is exact.
We will, in fact, apply this criterion to the Killing operator on Cahen--Wallach spaces below.
\begin{proposition}\label{EeqEk}
  Assume that the curvature filtration is parallel for \( \connection \) and that $E_R=E$. Then \( \connection \) is exact.
\end{proposition}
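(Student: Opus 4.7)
I would proceed by induction on the length $R$ of the curvature filtration. The base case $R = 0$ is immediate: the hypothesis $E_R = E$ forces $E = E_0 = \ker(\kappa)$, so $\connection$ is flat and hence locally exact, its coupled de~Rham sequence being locally a direct sum of copies of the ordinary de~Rham complex.

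For the inductive step, the restricted connection $\connection|_{E_{R-1}}$ has curvature filtration $0 \subseteq E_0 \subseteq \cdots \subseteq E_{R-1}$ of length $R-1$, parallel by hypothesis and filling all of $E_{R-1}$, so by induction $\connection|_{E_{R-1}}$ is exact. I would then apply Corollary~\ref{coro:ker_D_exact} with $F = E_{R-1}$, reducing the whole statement to the single kernel inclusion
\[
  \ker\bigl(\dconn \colon \Wedge^1 \otimes E \to \Wedge^2 \otimes E\bigr) \subseteq \Gamma(\Wedge^1 \otimes E_{R-1}).
\]

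To establish this inclusion, I would take $\phi \in \Gamma(\Wedge^1 \otimes E)$ with $\dconn\phi = 0$ and pass to its class $[\phi] \in \Gamma(\Wedge^1 \otimes E/E_{R-1})$. Since $\kappa(E) \subseteq \Wedge^2 \otimes E_{R-1}$ kills the quotient curvature, the induced connection on $E/E_{R-1}$ is flat, and $\dconn[\phi] = 0$ there; hence $[\phi] = \connection[\chi]$ for some $[\chi] \in \Gamma(E/E_{R-1})$. Choosing a lift $\chi \in \Gamma(E)$ one has $\phi - \connection\chi \in \Gamma(\Wedge^1 \otimes E_{R-1})$ together with the identity $\kappa(\chi) = -\dconn(\phi - \connection\chi) \in \Gamma(\Wedge^2 \otimes E_{R-1})$. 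Lemma~\ref{BensLemma1} applied at $r = R$ then forces $[\phi] = 0$ as soon as one verifies that the secondary differential of $[\phi]$, namely the class $[\kappa(\chi)] \in \Gamma(\Wedge^2 \otimes E_{R-1}/E_{R-2})$, vanishes; at that point $\phi \in \Gamma(\Wedge^1 \otimes E_{R-1})$ and the inductive hypothesis closes the argument.

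The main obstacle will be precisely this vanishing check, which is the formal calculation the text promises. It amounts to reconciling the two natural expressions for the secondary differential on $[\phi]$: on the one hand $\kappa([\chi])$ coming from the representation $[\phi] = \connection[\chi]$, and on the other hand the class induced by the stronger vanishing $\dconn\phi = 0$ in the full bundle $\Wedge^2 \otimes E$ rather than merely in $\Wedge^2 \otimes E/E_{R-1}$. The parallelism of the filtration, together with the identity $\kappa(\chi) = -\dconn(\phi - \connection\chi)$ and the injectivity of $\kappa \colon E_R/E_{R-1} \to \Wedge^2 \otimes E_{R-1}/E_{R-2}$ that underlies Lemma~\ref{BensLemma1}, should suffice to identify both expressions with the trivial class and complete the induction.
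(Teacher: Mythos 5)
Your outer scaffolding is sound and only mildly different from the paper's: the base case $E=E_0$ is flat, the restricted connection $\connection|_{E_{R-1}}$ does have parallel curvature filtration $E_0\subseteq\cdots\subseteq E_{R-1}$ exhausting $E_{R-1}$, and Corollary~\ref{coro:ker_D_exact} with $F=E_{R-1}$ correctly reduces everything to the inclusion $\ker(\dconn)\subseteq\Gamma(\Wedge^1\otimes E_{R-1})$. The paper dispenses with this outer induction: it represents a class of $H^1(E,\connection)$ by $\phi$ with $\dconn\phi=0$ (Proposition~\ref{thm: prop isomorphism between H1 and ker D}), applies Lemma~\ref{BensLemma1} repeatedly to this same $\phi$ to drive it down to $\Gamma(\Wedge^1\otimes E_0)$, and finishes by flatness of $\connection$ on $E_0$, with no appeal to Corollary~\ref{coro:ker_D_exact}.

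The genuine gap is the step you yourself defer: the vanishing of $[\kappa(\chi)]$ in $\Gamma(\Wedge^2\otimes E_{R-1}/E_{R-2})$ is never proved, and the ingredients you list cannot give it. The identity $\kappa(\chi)=-\dconn(\phi-\connection\chi)$ together with parallelism of $E_{R-1}$ only yields $\kappa(\chi)\in\Gamma(\Wedge^2\otimes E_{R-1})$, which is vacuous since $E=E_R$ already means $\kappa(E)\subseteq\Wedge^2\otimes E_{R-1}$; what you need is membership in $\Wedge^2\otimes E_{R-2}$, and the discrepancy between your two expressions for the ``secondary differential'' is exactly $\dconn\eta$ with $\eta=\phi-\connection\chi$ an arbitrary section of $\Wedge^1\otimes E_{R-1}$, which has no formal reason to lie in $\Wedge^2\otimes E_{R-2}$. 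Worse, $[\kappa(\chi)]$ is not an invariant of $[\phi]$ at all: the primitive $[\chi]$ is ambiguous up to a parallel section $[\zeta]$ of the flat quotient $E/E_{R-1}$ (such sections always exist locally), and by the injectivity of the induced map $\kappa\colon E/E_{R-1}\to\Wedge^2\otimes E_{R-1}/E_{R-2}$ this changes $[\kappa(\chi)]$ by the nonzero class $\kappa([\zeta])$. So ``reconciling the two expressions'' is not a routine calculation; it is precisely the content that the paper assigns to Lemma~\ref{BensLemma1}, which it applies directly to $\phi$ itself --- for which $\dconn\phi=0$ holds on the nose --- rather than to a primitive. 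To align with the paper you should not route through $\chi$ at the top of the filtration: invoke Lemma~\ref{BensLemma1} at $r=R$ in the form used in the paper's proof, namely that $\phi\in\Gamma(\Wedge^1\otimes E_R)$ with $\dconn\phi=0$ already forces $\phi\in\Gamma(\Wedge^1\otimes E_{R-1})$, and note that as it stands your plan has replaced that lemma by an unproved (and, as formulated, ill-posed) vanishing claim.
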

\begin{proof}
  A given element of \( H^1(E,\connection) \) may be represented by a section $\phi \in \Gamma(\Wedge^1\otimes E)$ for which \( \dconn \phi = 0 \).
  Since \( \phi \in \Gamma(\Wedge^1 \otimes E_R) \) we may proceed by induction using Lemma~\ref{BensLemma1} to conclude that \( \phi \in \Gamma(\Wedge^1 \otimes E_0) \).
  But \( \connection \) is flat on \( E_0 \), and \( \dconn \phi = 0 \), so there is \( \eta \in \Gamma(E_0) \) for which \( \phi = \connection \eta \).
\end{proof}

It is now straighforward to show the following.
\begin{theorem}\label{thm: prop E/E_r exact implies E exact}
  If the curvature filtration \( (E_r) \) of a connection \( \connection \) is parallel and the induced connection on \( E / E_R \) is exact, then \( D \) is exact on \( E \).
\end{theorem}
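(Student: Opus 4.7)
The plan is to deduce the statement directly from Proposition~\ref{thm: sub-quotient exactness} by taking as parallel subbundle $F := E_R$, the final (stable) term of the curvature filtration. Since the filtration is assumed parallel, so in particular $E_R$ is parallel for $\connection$. To apply Proposition~\ref{thm: sub-quotient exactness} I need three ingredients: (i) exactness of $\connection|_{E_R}$; (ii) exactness of the induced connection on $E/E_R$; (iii) injectivity of the curvature on $E/E_R$. Ingredient (ii) is precisely the hypothesis.

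For (i), I would observe that the curvature filtration of $\connection|_{E_R}$ is just $E_0 \subseteq E_1 \subseteq \cdots \subseteq E_R$: each $E_r$ with $r \le R$ is contained in $E_R$, and the defining condition $\kappa(\eta) \in \Wedge^2 \otimes E_{r-1}$ does not change upon restricting to $E_R$. This restricted filtration is parallel (as a sub-structure of the parallel filtration on $E$) and trivially satisfies $(E_R)_R = E_R$. Proposition~\ref{EeqEk} therefore applies and gives exactness of $\connection|_{E_R}$.

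For (iii), suppose $[\eta] \in E/E_R$ lies in the kernel of the induced curvature; this means $\kappa(\eta) \in \Wedge^2 \otimes E_R$, so by the very definition of the filtration $\eta \in E_{R+1}$. By maximality of the stable step $E_{R+1} = E_R$, whence $[\eta] = 0$. With (i)--(iii) in hand, Proposition~\ref{thm: sub-quotient exactness} immediately yields that $D$ is exact on $E$. I do not anticipate any genuine obstacle: all the substantive work has already been done in Lemma~\ref{BensLemma1}, Proposition~\ref{EeqEk}, and Proposition~\ref{thm: sub-quotient exactness}, and the only point demanding a moment's care is the verification that the curvature filtration of the restricted connection on $E_R$ coincides with $(E_0, \ldots, E_R)$, which is immediate once phrased correctly.
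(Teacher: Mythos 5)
Your proposal is correct and follows essentially the same route as the paper: the paper's own proof takes $F = E_R$, notes that the curvature on $E/E_R$ is injective by construction, and cites Propositions~\ref{thm: sub-quotient exactness} and~\ref{EeqEk} exactly as you do. Your only addition is to spell out the (correct and easy) verification that the curvature filtration of $\connection|_{E_R}$ coincides with $E_0 \subseteq \cdots \subseteq E_R$, a detail the paper leaves implicit.
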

\begin{proof}
  \( E / E_R \) is has injective curvature  by construction, so the result follows from
  Propositions \ref{thm: sub-quotient exactness} and \ref{EeqEk}.
\end{proof}

We will also need exactness in the following situation.
\begin{corollary}\label{corollary-complement}
  If the kernel of curvature \( E_0 \) is parallel and $E$ admits a $\connection$-parallel complement $ C$ to $E_0$, then $E_0=E_1$.
  In this case, $(E,\connection)$ is exact if and only if the induced connection on $ C$ is exact. 
\end{corollary}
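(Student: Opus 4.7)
The plan is to unpack the hypotheses algebraically and then invoke Proposition~\ref{product-prop}. Since $C$ is parallel for $\connection$, the connection restricts to give $\connection|_C : C \to \Wedge^1 \otimes C$, and consequently its curvature $\kappa|_C = \dconn \circ \connection|_C$ takes values in $\Wedge^2 \otimes C$. Combined with the direct-sum decomposition $E = E_0 \oplus C$ together with $\kappa|_{E_0} = 0$, the curvature of $\connection$ on $E$ decomposes as the sum of the zero homomorphism on $E_0$ and $\kappa|_C$ on $C$; in particular $\kappa(E) \subseteq \Wedge^2 \otimes C$.

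To see that $E_0 = E_1$, fix any local section $\eta$ of $E_1$ and write $\eta = \eta_0 + \eta_C$ with $\eta_0 \in \Gamma(E_0)$ and $\eta_C \in \Gamma(C)$. By the previous paragraph,
\[
\kappa(\eta) = \kappa(\eta_C) \in \Gamma(\Wedge^2 \otimes C),
\]
while the definition of $E_1$ forces $\kappa(\eta) \in \Gamma(\Wedge^2 \otimes E_0)$. Since $E_0 \cap C = 0$, this gives $\kappa(\eta_C) = 0$, so $\eta_C \in \Gamma(C \cap E_0) = 0$, and hence $\eta \in \Gamma(E_0)$. The reverse inclusion $E_0 \subseteq E_1$ is automatic.

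For the second claim, the decomposition $(E, \connection) = (E_0 \oplus C, \connection|_{E_0} + \connection|_C)$ puts us exactly in the setting of Proposition~\ref{product-prop}: $\connection$ is exact if and only if both $\connection|_{E_0}$ and $\connection|_C$ are. But by the very definition of $E_0$ the restriction $\connection|_{E_0}$ has vanishing curvature and is therefore flat, hence exact. The equivalence thus reduces to exactness of $\connection|_C$, as claimed.

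No real obstacle is anticipated here: the only mild subtlety is verifying that the parallelism of $C$ genuinely forces $\kappa$ to preserve $C$, which is immediate from $\kappa = \dconn \circ \connection$ and the fact that $\connection$ preserves both summands.
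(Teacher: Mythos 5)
Your proof is correct. The first half ($E_0=E_1$) is essentially identical to the paper's argument: decompose a section of $E_1$ along $E=E_0\oplus C$, use that parallelism of $C$ forces $\kappa(C)\subseteq\Wedge^2\otimes C$, and conclude from $\Gamma(\Wedge^2\otimes E_0)\cap\Gamma(\Wedge^2\otimes C)=0$. For the equivalence of exactness you take a slightly different route: the paper disposes of it in one line by identifying $C$ with the quotient $E/E_0$, implicitly hooking into the quotient/filtration results (Proposition \ref{thm: sub-quotient exactness}, Theorem \ref{thm: prop E/E_r exact implies E exact}), whereas you invoke the direct-sum statement, Proposition \ref{product-prop}, applied to $(E,\connection)=(E_0\oplus C,\connection|_{E_0}+\connection|_C)$ (legitimate, since both summands are parallel by hypothesis, as noted in the remark following that proposition), together with the observation that $\connection|_{E_0}$ is flat and hence exact. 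Your packaging has the advantage of yielding both directions of the ``if and only if'' in one stroke, without appealing to the curvature-filtration machinery; the paper's phrasing via $C\cong E/E_0$ is the one that generalises to the situation where only a parallel subbundle, not a parallel complement, is available. Both arguments rest on the same standing constant-rank assumption that makes $E_0$, $\kappa(E)$ and the relevant quotients into bundles, which you use implicitly and harmlessly.
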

\begin{proof}
  That \( E_0 = E_1 \) follows from the assumption that \( C \) is preserved by \( \connection \) and hence by \( \kappa \):
  given an element \( \phi \in \Gamma(E_1) \), decompose as \( \phi = \phi' + \phi'' \in \Gamma(E_0 \oplus C) = \Gamma(E) \).
  One finds that
  \[ \kappa(\phi'') = \kappa(\phi'+\phi'') = \kappa(\phi) \in \Gamma(\Wedge^2 \otimes C) \cap \Gamma(\Wedge^2 \otimes E_0) = 0 , \]
 so that \( \phi'' \in \Gamma(E_0 \cap C) = 0 \) and thus \( \phi \in \Gamma(E_0) \).
  The second statement follows from the consequent identification of \( C \) with \( E / E_0 \).
\end{proof}

\begin{remark}
We remark without expanding on the proof that Theorem \ref{thm: prop E/E_r exact implies E exact} is subsumed by a spectral sequence argument, which also generalises the result.
Indeed, supposing that the curvature filtration is parallel, there is a natural induced filtration on the quotient complex \eqref{eq: gauge complex}.
From the induced filtration spectral sequence, one finds that \( H^1(E, \connection) \subseteq H^1(E/E_R, \connection) \).
In particular, if \( H^1(E/E_R, \connection) = 0 \), then \( H^1(E,\connection) = 0 \).
\end{remark}

The following Lemma provides an effective characterization for connections with parallel curvature filtration, dependent only on curvature and its first derivatives.
\begin{lemma}\label{dkappa-lemma}
  For a connection \( \connection \) on \( E \) with curvature \( \kappa \), the following are equivalent:
  \begin{enumerate}
    \item The curvature filtration of \( \connection \) is parallel.

    \item For each \( r = 0, \ldots , R \), the restriction to \( E_r \) of augmented curvature \( E \to \Delta^2 \otimes E  \) takes values in \( \Delta^2 \otimes E_{r-1} \) .

    \item For any choice of torsion-free affine connection, and for each \( r = 0, \ldots, R \),
    \begin{equation*}
      \phi^\alpha \in \Gamma(E_r) \quad \mbox{implies that} \quad \bigl(\connection_a \kappa_{bc}{}^\beta{}_\alpha\bigr)\phi^\alpha \in \Gamma(\Wedge^1 \otimes \Wedge^2 \otimes E_{r-1})  .
    \end{equation*}

  \end{enumerate}
\end{lemma}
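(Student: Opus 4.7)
The equivalence of (2) and (3) will be essentially tautological, so I would dispose of it first. By Proposition~\ref{thm: prop augmented curvature formula}, the augmented curvature of $\phi^\alpha \in \Gamma(E)$ has two components, namely $\kappa_{bc}{}^\alpha{}_\beta \phi^\beta$ and $(\connection_a \kappa_{bc}{}^\alpha{}_\beta)\phi^\beta$. For $\phi^\alpha \in \Gamma(E_r)$, the first component automatically lies in $\Wedge^2 \otimes E_{r-1}$ by the very definition of $E_r$, so the condition (2) on the augmented curvature reduces precisely to (3). Thus the real content is the equivalence $(1) \Leftrightarrow (3)$, and the key tool is the Leibniz identity
\[
\connection_a\bigl(\kappa_{bc}{}^\alpha{}_\beta \phi^\beta\bigr) = \bigl(\connection_a \kappa_{bc}{}^\alpha{}_\beta\bigr)\phi^\beta + \kappa_{bc}{}^\alpha{}_\beta \connection_a \phi^\beta,
\]
where the connection on the left denotes the coupled connection on $\Wedge^2 \otimes E$ induced by a choice of torsion-free affine connection on $\Wedge^1$ together with $\connection$ on $E$.

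For $(1) \Rightarrow (3)$, I would pick $\phi^\alpha \in \Gamma(E_r)$ and use the parallelness hypothesis in two places: first, $\connection \phi^\alpha \in \Gamma(\Wedge^1 \otimes E_r)$, so $\kappa(\connection \phi) \in \Gamma(\Wedge^1\otimes \Wedge^2 \otimes E_{r-1})$ by the very definition of $E_r$; second, since $\kappa(\phi) \in \Gamma(\Wedge^2 \otimes E_{r-1})$ and $E_{r-1}$ is parallel, the coupled covariant derivative $\connection(\kappa(\phi))$ lies in $\Gamma(\Wedge^1 \otimes \Wedge^2 \otimes E_{r-1})$. Rearranging the Leibniz identity expresses $(\connection \kappa)\phi$ as the difference of these two terms, each of which lies in $\Wedge^1 \otimes \Wedge^2 \otimes E_{r-1}$.

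The converse $(3) \Rightarrow (1)$ I would prove by induction on $r$, as this is where the filtered structure is genuinely exploited. For the base $r = 0$, a section $\phi^\alpha \in \Gamma(E_0)$ satisfies $\kappa(\phi) = 0$, so differentiating and applying Leibniz gives $\kappa(\connection \phi) = -(\connection \kappa)\phi$, which by (3) with $r=0$ lies in $\Wedge^1 \otimes \Wedge^2 \otimes E_{-1} = 0$. Hence $\connection \phi \in \Gamma(\Wedge^1 \otimes E_0)$, and $E_0$ is parallel. For the inductive step, assume $E_0, \ldots, E_{r-1}$ are all parallel, and take $\phi^\alpha \in \Gamma(E_r)$. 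Then $\kappa(\phi) \in \Gamma(\Wedge^2 \otimes E_{r-1})$, and by the inductive hypothesis $\connection(\kappa(\phi)) \in \Gamma(\Wedge^1 \otimes \Wedge^2 \otimes E_{r-1})$. Combined with (3), the Leibniz identity yields $\kappa(\connection \phi) \in \Gamma(\Wedge^1 \otimes \Wedge^2 \otimes E_{r-1})$, which by the defining property of $E_r$ means $\connection \phi \in \Gamma(\Wedge^1 \otimes E_r)$, closing the induction.

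I do not anticipate a serious obstacle: the argument is entirely formal once one notices that the defining relation $E_r = \{\eta : \kappa(\eta) \in \Wedge^2\otimes E_{r-1}\}$ and the Leibniz rule interact to propagate parallelness inductively up the filtration. The only mild point of care is that ``$\connection$'' in the identity $\connection_a(\kappa\,\phi) = (\connection_a \kappa)\phi + \kappa\,\connection_a \phi$ denotes the coupled connection on $\Wedge^2\otimes E$ and so depends on an auxiliary torsion-free connection on $\Wedge^1$; but condition (3) is independent of this choice, since any two such choices differ on $\connection \kappa$ by an algebraic term that manifestly preserves the $E$-factor and so lands in $\Wedge^1 \otimes \Wedge^2 \otimes E_r \subseteq \Wedge^1 \otimes \Wedge^2 \otimes E_{r-1}$ is in fact automatic once (3) holds for one choice.
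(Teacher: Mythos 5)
Your proposal is correct and follows essentially the same route as the paper: the whole lemma rests on the Leibniz identity $\connection_a(\kappa_{bc}{}^\alpha{}_\beta\phi^\beta)=(\connection_a\kappa_{bc}{}^\alpha{}_\beta)\phi^\beta+\kappa_{bc}{}^\alpha{}_\beta\connection_a\phi^\beta$ together with an induction up the curvature filtration, the only cosmetic difference being that the paper performs the inductive step by passing to the quotient $E/E_r$ and reusing the base-case argument there, whereas you run the same Leibniz argument directly at level $r$ using the hypothesis that $E_{r-1}$ is parallel. One small blemish in your closing aside on independence of the auxiliary affine connection: the inclusion $\Wedge^1\otimes\Wedge^2\otimes E_r\subseteq\Wedge^1\otimes\Wedge^2\otimes E_{r-1}$ is backwards (the filtration is ascending), and the correct, simpler point is that the change in $(\connection\kappa)\phi$ is algebraic in $\kappa(\phi)\in\Gamma(\Wedge^2\otimes E_{r-1})$ and so already lands in $\Wedge^1\otimes\Wedge^2\otimes E_{r-1}$ --- but this aside is not needed for the equivalence, so the proof stands.
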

The statement of condition \( (3) \) depends on a choice of affine connection in order to extend \( \connection \) to a connection on \( \Wedge^2 \otimes E \), which is again denoted \( \connection \).
However, the 
validity of the statement does not depend on the choice.
This follows from the equivalence with the other two conditions, or is simple to check directly.
Note as well that the tensor \( (\connection_a\kappa_{bc}{}^\beta{}_\alpha)\phi^\alpha \) has vanishing skew-symmetric component, so condition \( (3) \) is a statement on the remaining hook-symmetric component.
\begin{proof}
  Consider first the restriction to \( E_0 \): for a choice of torsion-free affine connection, and any \( \phi^\alpha \in \Gamma(E_0) \),
  it follows from \( 0 = \tensor{\kappa}{_b_c^\beta_\alpha}\phi^\alpha \) that
  \[ 0 = \connection_a \bigl(\kappa_{bc}{}^\beta{}_\alpha \phi^\alpha\bigr) =
  \bigl(\connection_a \kappa_{bc}{}^\beta{}_\alpha \bigr)\phi^\alpha + \kappa_{bc}{}^\beta{}_\alpha (\connection_a \phi^\alpha) .
  \]
  As such, one has
  \( (\connection_a \kappa_{bc}{}^\beta{}_\alpha) \phi^\alpha = 0 \) if and only if  \( \kappa_{bc}{}^\beta{}_\alpha (\connection_a \phi^\alpha) = 0 \), in turn if and only if \( \connection_a \phi^\alpha \in \Gamma(\Wedge^1 \otimes E_0) \).
  Since \( E_0 \) is parallel if and only if \( \connection_a \phi^\alpha \in \Gamma(\Wedge^1 \otimes E_0) \) for all sections \( \phi^\alpha \) of \( E_0 \), this shows the equivalence of (1) and (3) on \( E_0 \).
  The equivalence of (3) and (2) follows from the formula for augmented curvature (Proposition \ref{thm: prop augmented curvature formula}).

  The general statement follows inductively.
  Suppose that the curvature filtration is parallel at least up to \( E_{r} \).
  The quotient of \( E_{r+1} \) by \( E_r \) reduces to the flat case, because
  \begin{itemize}
    \item \( E_{r+1} \) is parallel if and only if \( E_{r+1} / E_r \) is parallel in \( E / E_r \).

    \item \( \mathcal{D}\connection({E_{r+1}}) \subseteq \Delta^2 \otimes E_r \) if and only if \( \mathcal{D}\connection(E_{r+1} / E_r) = 0 \) in \( E / E_r \).

    \item \( (D \kappa) (E_{r+1}) \subseteq \Wedge^1 \otimes \Wedge^2 \otimes E_r \) if and only if \( (D \kappa) (E_{r+1}/E_r) = 0 \) in \( E / E_r \).
  \end{itemize}
  If any of \( (1), (2), \) or \( (3) \) hold for all \( r \), then the others do as well.
  If any of \( (1), (2), \) or \( (3) \) fails for some (minimal) \( r \), then the others do as well.
\end{proof}

\section{The Killing operator and the Killing connection}
\label{kill_and_more_kill}
Recall the diagram~(\ref{shiny_new_diagram}),
\begin{equation*}\begin{array}{cccccccccccc}
0 & \to & \Wedge^2&\to&
\begin{array}{c}\Wedge^2\\[-3pt]\oplus\\[-2pt]
\Wedge^2\otimes\Wedge^1\end{array}&\to&
\begin{array}{c}\Wedge^2\otimes\Wedge^1\\[-3pt]\oplus\\[-2pt]
\Wedge^3\otimes\Wedge^1\end{array}&\to&
\begin{array}{c}\Wedge^3\otimes\Wedge^1\\[-3pt]\oplus\\[-2pt]
\Wedge^4\otimes\Wedge^1\end{array}&\to&\cdots\\
&& \uparrow&&\uparrow&&\uparrow&&\uparrow\\
&& E&\stackrel{D}{\longrightarrow}&\Wedge^1\otimes E&
\stackrel{\dconn}{\longrightarrow}&\Wedge^2\otimes E&
\stackrel{\dconn}{\longrightarrow}&\Wedge^3\otimes E&
\stackrel{\dconn}{\longrightarrow}&\cdots\\
&& \uparrow&&\uparrow&&\uparrow&&\uparrow\\
&& \Wedge^1&\stackrel{\mathcal{K}}{\longrightarrow}&
\bigodot^2\!\Wedge^1&\stackrel{\mathcal{C}/2}{\longrightarrow}&
\begin{picture}(12,6)
\put(0,-6){\line(1,0){12}}
\put(0,0){\line(1,0){12}}
\put(0,6){\line(1,0){12}}
\put(0,-6){\line(0,1){12}}
\put(6,-6){\line(0,1){12}}
\put(12,-6){\line(0,1){12}}
\end{picture}&\stackrel{\mathcal{B}}{\longrightarrow}&
\begin{picture}(12,6)
\put(0,-12){\line(1,0){6}}
\put(0,-6){\line(1,0){12}}
\put(0,0){\line(1,0){12}}
\put(0,6){\line(1,0){12}}
\put(0,-12){\line(0,1){18}}
\put(6,-12){\line(0,1){18}}
\put(12,-6){\line(0,1){12}}
\end{picture}&\longrightarrow&\cdots .
\\[2mm]
\end{array}\end{equation*}
Our main task in this section is to finish its construction and justify the claims of exactness and commutativity made in the introduction, thus completing the proof of
Theorem~\ref{shiny_new_theorem}.
The missing formul{\ae} are as follows.
\begin{enumerate}[(a)]
\item The first two horizontal operators along the top row are given by
$$\lambda_{de}\mapsto
\left[\!\begin{array}{cc}-\lambda_{cd}\\
\nabla_{[c}\lambda_{d]e}\end{array}\!\right]
\quad\mbox{and}\quad
\left[\!\begin{array}{cc}\sigma_{cd}\\ \lambda_{cde}\end{array}\!\right]
\mapsto
\left[\!\begin{array}{cc}\nabla_{[b}\sigma_{c]d}+\lambda_{bcd}\\
\nabla_{[b}\lambda_{cd]e}+R_{e[b}{}^f{}_c\sigma_{d]f}
\end{array}\!\right]$$
followed by
$$\left[\!\begin{array}{cc}\sigma_{bcd}\\ \lambda_{bcde}\end{array}\!\right]
\mapsto
\left[\!\begin{array}{cc}\nabla_{[a}\sigma_{bc]d}-\lambda_{abcd}\\
\nabla_{[a}\lambda_{bcd]e}-R_{e[a}{}^f{}_b\sigma_{cd]f}\end{array}\!\right],
\quad\mbox{and so on.}$$
\item\label{vert1} The first two vertical sequences are given by
$$\begin{array}{cc}
&\makebox[0pt]{$\mu_{de}-\nabla_{[d}\sigma_{e]}$}\\ &\uparrow\\
\left[\!\begin{array}{c}\sigma_d\\
\nabla_{[d}\sigma_{e]}\end{array}\!\right]\!\!&\!\!
\left[\!\begin{array}{c}\sigma_d\\ \mu_{de}\end{array}\!\right]\\
\uparrow\\ \sigma_d
\end{array}\quad\mbox{and}\quad
\begin{array}{cc}
&\makebox[0pt]{$\left[\!\begin{array}{c}\sigma_{[cd]}\\
\mu_{[cd]e}+\nabla_{[c}h_{d]e}\end{array}\!\right]$,\enskip
\makebox[0pt][l]{where $h_{de}\equiv\sigma_{(de)}$}}\\
&\uparrow\\ \left[\!\begin{array}{c}h_{cd}\\
2\nabla_{[d}h_{e]c}\end{array}\!\right]\!\!&\!\!
\left[\!\begin{array}{c}\sigma_{cd}\\ \mu_{cde}\end{array}\!\right].\\
\uparrow\\ h_{cd}
\end{array}\hspace{130pt}$$
These are typical `splitting operators' from parabolic differential
geometry~\cite{HammerlSombergSoucekSilhan12}.
\item The remaining vertical sequences are given by
$$\begin{array}{cc}
&\left[\!\begin{array}{c}\sigma_{bcd}\\ \mu_{[bcd]e}\end{array}\!\right]\\
&\uparrow\\
\left[\!\begin{array}{c}0\\
\mu_{bcde}\end{array}\!\right]\!\!\!&\!\!\!
\left[\!\begin{array}{c}\sigma_{bcd}\\ \mu_{bcde}\end{array}\!\right]\\
\uparrow\\ \mu_{bcde}
\end{array}\quad\mbox{and}\quad\begin{array}{cc}
&\left[\!\begin{array}{c}\sigma_{abcd}\\ \mu_{[abcd]e}\end{array}\!\right]\\
&\uparrow\\
\left[\!\begin{array}{c}0\\
\mu_{abcde}\end{array}\!\right]\!\!\!&\!\!\!
\left[\!\begin{array}{c}\sigma_{abcd}\\ \mu_{abcde}\end{array}\!\right],\\
\uparrow\\ \mu_{abcde}
\end{array}\quad\mbox{and so on}\,.$$
\end{enumerate}
It is now straightforward to verify the key properties
of~(\ref{shiny_new_diagram}), namely that
\begin{itemize}
\item this diagram commutes,
\item the columns are short exact sequences,
\item the top row is an exact complex,
\end{itemize}
the verification of which is discussed in the following proof.

\begin{proof}[Proof of Theorem~\ref{shiny_new_theorem}]
  We restrict attention to the first three columns of the diagram, as that is what is needed for Theorem~\ref{shiny_new_theorem}.

  \subsubsection*{Exactness of the columns:}
  That the first column is exact is immediate from the defining formul{\ae}.
  Likewise, exactness of the third column is easy from the fact that the Bianchi identity characterises elements of \( \Wedge^{2} \otimes \Wedge^{2} \) with curvature tensor symmetries.
  Exactness of the middle column is a direct calculation, the key point being that the skewing map \( \Wedge^{1} \otimes \Wedge^{2} \to \Wedge^{2} \otimes \Wedge^{1} \) is an isomorphism.

  \subsubsection*{Exactness of the top row:}
  If an element \( \left[\!\begin{array}{cc}\sigma_{cd}\\ \lambda_{cde}\end{array}\!\right] \) is in the kernel of the second operator on the top row, then clearly \( \lambda_{cde} = - \nabla_{[c}\sigma_{d]e} \).
  This is precisely the condition to be in the image of the first operator.

  \subsubsection*{Commutativity:}
  The commutativity of the lower left square is equivalent to
  \[\nabla_b\sigma_c\!-\!\nabla_{[b}\sigma_{c]}\!=\!\nabla_{(b}\sigma_{c)}
  \ \text{ and }\ 
    \nabla_a\nabla_{[b}\sigma_{c]}-R_{bc}{}^e{}_a\sigma_e\!=\! \nabla_{[b}\nabla_{c]}\sigma_a\!+\!\tfrac12\left( \nabla_b\nabla_a\sigma_c\!-\!\nabla_c\nabla_a\sigma_b\right).
  \]
  The first equation holds trivially, and the second follows from the definition of the Riemannian curvature tensor    and its Bianchi symmetry, i.e.~from
  \[ R_{bc}{}^e{}_a\sigma_e=-2\nabla_{[b}\nabla_{c]}\sigma_a\quad\text{and}\quad
 R_{[ab}{}^e{}_{c]}=0.\]
 
  Commutativity of the lower right square follows from Equation (11)  in~\cite{CostanzaEastwoodLeistner21}, obtained from a lengthy but straightforward calculation.
  By mapping \( h_{bc} \) vertically and then applying \( \dconn \), one has
  \[ \left[ \!\begin{array}{c}
    0 \\
    \nabla_a\nabla_{[c}h_{d]b}- \nabla_b\nabla_{[c}h_{d]a}-R_{cd}{}^e{}_{[a}h_{b]e}
  \end{array}\! \right] =
\left[ \!\begin{array}{c}
  0 \\
  \tfrac{1}{2}\mathcal{C}(h)_{abcd}
\end{array}\! \right]
, \]
which is  (12)  in~\cite{CostanzaEastwoodLeistner21}.
The equality can be established by commuting derivatives and using Bianchi symmetry.

Finally, 
commutativity of the top left and right squares follows from the lower commutativity and vertical exactness.
Indeed, the operators on the top row can be defined by choosing any lift to the middle row, applying \( D \) or \( \dconn \), and projecting back to the top row.

\medskip
We conclude the  proof of the theorem by recalling that the homomorphism $\mathcal R:E\to \begin{picture}(12,12)(0,2)
\put(0,0){\line(1,0){12}}
\put(0,6){\line(1,0){12}}
\put(0,12){\line(1,0){12}}
\put(0,0){\line(0,1){12}}
\put(6,0){\line(0,1){12}}
\put(12,0){\line(0,1){12}}
\end{picture}$
in~(\ref{fancyR})  originates from the curvature $\kappa$ of the Killing connection in~(\ref{Killing_curvature}). Under the assumption that the rank of $\kappa$ is constant, we have  
the quotient of vector bundles
\[
\overline{\begin{picture}(12,12)(0,2)
\put(0,0){\line(1,0){12}}
\put(0,6){\line(1,0){12}}
\put(0,12){\line(1,0){12}}
\put(0,0){\line(0,1){12}}
\put(6,0){\line(0,1){12}}
\put(12,0){\line(0,1){12}}
\end{picture}}
\equiv
\displaystyle
\begin{picture}(12,12)(0,2)
\put(0,0){\line(1,0){12}}
\put(0,6){\line(1,0){12}}
\put(0,12){\line(1,0){12}}
\put(0,0){\line(0,1){12}}
\put(6,0){\line(0,1){12}}
\put(12,0){\line(0,1){12}}
\end{picture}\,/{{\mathcal{R}}(E)}.
\]
Then, 
 in the third column of~\eqref{shiny_new_diagram} we may replace \( \Wedge^{2} \otimes E \) with its quotient by \( \kappa(E) \) and 
 \( \begin{picture}(12,12)(0,2)
\put(0,0){\line(1,0){12}}
\put(0,6){\line(1,0){12}}
\put(0,12){\line(1,0){12}}
\put(0,0){\line(0,1){12}}
\put(6,0){\line(0,1){12}}
\put(12,0){\line(0,1){12}}
\end{picture} \) by its quotient 
\( \overline{\begin{picture}(12,12)(0,2)
\put(0,0){\line(1,0){12}}
\put(0,6){\line(1,0){12}}
\put(0,12){\line(1,0){12}}
\put(0,0){\line(0,1){12}}
\put(6,0){\line(0,1){12}}
\put(12,0){\line(0,1){12}}
\end{picture}} \,, \) 
and the resulting column will remain exact. With the exactness of the top row and the commutativity of the diagram, a typical diagram chase implies the desired equivalence in Theorem~\ref{shiny_new_theorem}.
\end{proof}

In fact, the connection (\ref{killing_connection}) and the commutative diagram
(\ref{shiny_new_diagram}) with its very useful properties, as above, are
available for any torsion-free affine connection (and, although we shall not
need it, these constructions are {\em projectively invariant\/}).  More
generally, we may consider the operator
$$\sigma_a\longmapsto\nabla_{(a}\sigma_{b)},$$
where $\sigma_a$ is a $1$-form on an arbitrary smooth manifold and $\nabla_a$
is an arbitrary torsion-free affine connection.  We call the $1$-forms in the
kernel of this operator \emph{Killing covectors\/} and (essentially by design)
we can identify them with $D_a$-parallel sections of the bundle~$E$.
Properties of this {\em Killing operator\/} can generally be read off from the
diagram~(\ref{shiny_new_diagram}).  In particular, both its kernel and the
range are determined by the kernel and range of the Killing connection, as in
the following theorem, the first part of which is well-known (essentially since
\cite{Kostant55}).
\begin{theorem}\label{killtheo}
Let $\nabla$ be a torsion free affine connection on a smooth manifold and let
$\D$ be the Killing connection on the bundle $E\equiv\Wedge^1\oplus\Wedge^2$.
\begin{enumerate}
\item The map
\[\Wedge^1\ni\sigma_b\longmapsto
\left[\!\begin{array}{c}\sigma_b\\ \nabla_{[b}\sigma_{c}]\end{array}\!\right]
\in E\]
is an isomorphism between Killing covectors and parallel sections of $\D$.
\item\label{range_condition} Regarding the range of the Killing operator,
$$h_{ab}=\nabla_{(a}\sigma_{b)}\iff
\left[\!\begin{array}{c}h_{ab}\\ 2\nabla_{[b}h_{c]a}
\end{array}\!\right]
=D_a\left[\!\begin{array}{c}\sigma_b\\
\mu_{bc}\end{array}\!\right],
\enskip\mbox{for some }\mu_{bc}\in\Wedge^2.$$
\end{enumerate}
\end{theorem}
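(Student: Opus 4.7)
The plan is to prove part~(\ref{range_condition}) first and then deduce part~(1) as the case $h_{ab}=0$.

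For the forward direction of (\ref{range_condition}), suppose $h_{ab}=\nabla_{(a}\sigma_{b)}$. Set $\mu_{bc}:=\nabla_{[b}\sigma_{c]}$, which manifestly lies in $\Wedge^2$. Unpacking $D_a$, the first component of $D_a\left[\!\begin{array}{c}\sigma_b\\ \mu_{bc}\end{array}\!\right]$ is $\nabla_a\sigma_b-\mu_{ab}=\nabla_{(a}\sigma_{b)}=h_{ab}$, as required. The second component is $\nabla_a\mu_{bc}-R_{bc}{}^e{}_a\sigma_e$, and the task is to verify that this equals $2\nabla_{[b}h_{c]a}$. Expanding both sides in terms of $\nabla\nabla\sigma$, this reduces to an identity among second covariant derivatives of $\sigma$; the discrepancy is cleared by applying the Ricci identity $(\nabla_b\nabla_c-\nabla_c\nabla_b)\sigma_a=-R_{bc}{}^e{}_a\sigma_e$ and then invoking the algebraic Bianchi symmetry $R_{[abc]}{}^e=0$ to collapse the curvature terms down to $-R_{bc}{}^e{}_a\sigma_e$. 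This step is the main (though entirely routine) computation; it is precisely the same manipulation used to derive commutativity of the lower-left square of~\eqref{shiny_new_diagram} earlier in the paper.

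For the reverse direction of (\ref{range_condition}), suppose we are given some $\mu_{bc}\in\Wedge^2$ satisfying the claimed matrix equation. The first component yields $h_{ab}=\nabla_a\sigma_b-\mu_{ab}$. Since the left-hand side is symmetric in $ab$ while $\mu_{ab}$ is skew, passing to the symmetric part immediately gives $h_{ab}=\nabla_{(a}\sigma_{b)}$, which is the required conclusion. (The second component then carries no further information at this level, being automatically satisfied by the forward direction applied to this $\sigma$.)

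Part~(1) now follows quickly. A section $[\sigma_b,\mu_{bc}]\in\Gamma(E)$ is $D$-parallel precisely when $D_a[\sigma_b,\mu_{bc}]=0$, which is the special case of (\ref{range_condition}) with $h_{ab}=0$. The first component of $D_a[\sigma,\mu]=0$ forces $\mu_{ab}=\nabla_a\sigma_b$; the skew-symmetry of $\mu$ then forces $\nabla_{(a}\sigma_{b)}=0$, so $\sigma_b$ is a Killing covector, and $\mu_{bc}=\nabla_{[b}\sigma_{c]}$ is exactly the image of $\sigma_b$ under the stated map. Conversely, for any Killing covector $\sigma_b$, the forward direction of (\ref{range_condition}) with $h_{ab}=0$ shows that $\left[\!\begin{array}{c}\sigma_b\\ \nabla_{[b}\sigma_{c]}\end{array}\!\right]$ is $D$-parallel. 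Injectivity is immediate from the first slot, so the map is an isomorphism onto the space of $D$-parallel sections.
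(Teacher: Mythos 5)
Your proof is correct, and it follows essentially the same route as the paper: the paper reads Theorem~\ref{killtheo} off diagram~(\ref{shiny_new_diagram}), and your verification of the second component (Ricci identity plus the Bianchi symmetry $R_{[ab}{}^e{}_{c]}=0$, valid for any torsion-free connection) is exactly the lower-left-square commutativity computation, while your reverse direction and part~(1) amount to the observation that the first slot forces $\mu_{ab}=\nabla_{[a}\sigma_{b]}$ once $h_{ab}$ is symmetric. No gaps.
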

As regards this article, however, the main yield from
diagram~(\ref{shiny_new_diagram}) is Theorem~\ref{shiny_new_theorem}, which,
in the terminology of Definition~\ref{exact_connection} and applied to locally symmetric spaces, reads as follows. 
\begin{corollary}\label{2exact}
Let $ (M,g)$ be a semi-Rieman\-nian locally symmetric space.  Then the complex
(\ref{key_complex}) is exact if and only if the Killing connection is exact.
\end{corollary}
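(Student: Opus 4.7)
The plan is to deduce Corollary~\ref{2exact} directly from Theorem~\ref{shiny_new_theorem}. The only substantive point requiring checking is that the constant-rank hypothesis on the homomorphism $\mathcal{R}$ of~(\ref{fancyR}) is automatic on a locally symmetric space.

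First I would observe that $\nabla_e R_{abcd}=0$ under the local symmetry assumption, so the term $(\nabla^e R_{abcd})\sigma_e$ in formula~(\ref{fancyR}) drops out. What remains is a purely algebraic expression in the (now parallel) Riemann curvature tensor applied to the argument $(\sigma_c,\mu_{cd})$; since it is built from a parallel tensor, $\mathcal{R}$ is a parallel bundle homomorphism. Any parallel homomorphism of vector bundles has constant rank on each connected component of $M$, so the hypothesis of Theorem~\ref{shiny_new_theorem} is satisfied without further work. Equivalently, as noted in the sentence immediately following Theorem~\ref{shiny_new_theorem}, constancy of the rank of $\mathcal{R}$ is the same as constancy of the rank of the curvature $\kappa$ of the Killing connection, and the latter is patently constant in the locally symmetric setting.

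With constant rank established, Theorem~\ref{shiny_new_theorem} immediately yields the equivalence between local exactness of~(\ref{key_complex}) and local exactness of the curvature-reduced complex~(\ref{curvature_reduced_complex}) for the Killing connection. By Definition~\ref{exact_connection} (and the remark right after it identifying that definition with local exactness of~(\ref{curvature_reduced_complex})), this latter condition is exactly the statement that the Killing connection is exact, and the corollary follows. There is no real obstacle here: the content lives entirely in Theorem~\ref{shiny_new_theorem}, and the role of the local symmetry assumption is simply to supply the constant-rank condition for free.
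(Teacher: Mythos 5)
Your proposal is correct and follows exactly the route the paper intends: the corollary is presented there as a direct restatement of Theorem~\ref{shiny_new_theorem} in the language of Definition~\ref{exact_connection}, with the constant-rank hypothesis on $\mathcal{R}$ (equivalently on $\kappa$) supplied automatically by $\nabla R=0$, precisely as you argue. Nothing is missing.
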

Exactness of the Killing connection, however, is quite a difficult issue and
will occupy the rest of this article, finally leading to our proof of
Theorem~\ref{calabi-theo} via Theorem~\ref{killcon-exact} and this
Corollary~\ref{2exact}.  In particular, even for locally symmetric spaces, it
is not always the case that the Killing connection is exact, and, in accordance
with Theorem~\ref{calabi-theo}, it is already false for $S^1\times S^2$ (see
Example~\ref{coro:non_exactness}).

Instead, as indicated in \S\ref{subsection_augmented}, we can strengthen the
criteria for the range of the Killing connection by using augmented curvature.
Specifically, for a locally symmetric metric (of any signature) it is easy to
check, as observed in~\cite{CostanzaEastwoodLeistner21}, that the subbundle $E_0\subset E$ is
parallel for the Killing connection.  (Indeed, we shall soon show in
Proposition~\ref{Ekparallel} that the whole curvature filtration $E_k$ is
parallel.)  Therefore, Proposition~\ref{complete_characterization} applies and
so (\ref{eq-augmented curvature necessary condition}) gives precise criteria
for a section of $\Wedge^1\otimes E$ to be in the range of the Killing
connection, the first component of which leads exactly to
(\ref{curvature_reduced_complex}) and then to (\ref{key_complex}) and
Theorem~\ref{calabi-theo}.  We obtain a stronger (second order) criterion from
the second component of~(\ref{eq-augmented curvature necessary condition}) and
it is simply a matter of substituting formula (\ref{range_condition}) of
Theorem~\ref{killtheo} to unpack this criterion as a third order differential
condition on $h_{ab}\in\Gamma(\bigodot^2\!\Lambda^1)$ in order that it be
locally in the range of the Killing operator.  Writing out this operator in
detail gives~\cite[Th\'eor\`eme~7.2]{GasquiGoldschmidt83}.

\section{The Killing connection for semi-Rieman\-nian locally symmetric spaces}
\label{sec:products}
In this section we  study the exactness of the Killing connection for semi-Rieman\-nian symmetric spaces in order to provide the ingredients for the proof of Theorem \ref{calabi-theo}, which will be carried out in Section~\ref{lorsec}.
We will denote by $\nabla$  the Levi-Civita connection of a semi-Rieman\-nian manifold $(M,g)$ and $R=2\, \nabla^\w\circ \nabla$ its curvature tensor.

\subsection{The curvature filtration for the Killing connection}
In this section we divert from our usual notation and   write $R(X,Y)$ for $X,Y\in TM$, when we  consider the curvature tensor  $R$ as a section of $\Wedge^2\otimes \so(TM,g)$, so that $R(X,Y)\in \Gamma\left( \so(TM,g)\right)$ denotes $R$ applied to $X$ and $Y$.  With the convention for $R_{abcd}$ from the introduction it is
\[\left(R(X,Y)Z\right))^d=-X^aY^bZ^c R_{abc}{}^d.\]
Since $(M,g)$ is locally symmetric, i.e.~$\nabla R=0$, in the following we can work at a fixed point, which we will omit from notation. Moreover, local symmetry implies that
\[\hol=\mathrm{span}\{ R(X,Y)\mid X,Y\in TM\}\]
is the holonomy algebra of $(M,g)$. We define
\[
\aut(R):=\{A\in \so(TM,g)\mid A\cdot R=0\}.
\]
Here  $A\cdot R$ denotes the action of an endomorphism $A\in \so(TM,g)$ on the curvature tensor $R\in
\Lambda^2\otimes \so(TM,g)$,
\begin{equation}\label{action}
 (A\cdot R)(X,Y) =
\left[ A,R(X,Y)\right] -R(AX,Y)-R(X,AY),\end{equation}
where $[-,-]$ denotes the commutator of two endomorphisms in $\so(TM)$.
In indices, with $A^e_{~f}\in \so(TM)$ and $R_{ab}{}^c{}_d\in \Lambda^2\otimes \so(TM,g)$, this is
\[
(A\cdot R)_{ab}{}^c{}_d
=
A^c_{~e}R_{ab}{}^e{}_d
-
A^e_{~d}R_{ab}{}^c{}_e
-
A^e_{~a}R_{eb}{}^c{}_d
-
A^e_{~b}R_{ae}{}^c{}_d,
\]
so that
\[
(A\cdot R)_{abcd}
=
-2\left( R_{ab}{}^e{}_{[c}A_{d]e}+
R_{cd}{}^e{}_{[a}A_{b]e}\right).
\]
We fix the vector bundle $E$ with the Killing connection $D$, as before,
\begin{equation}
E:=\ostack{TM}{\so(TM,g)},\qquad
\label{connect}\D_X\begin{bmatrix} \xi\\A\end{bmatrix}=\begin{bmatrix} \nabla_X \xi-A(X)\\\nabla_XA+R(\xi,X)\end{bmatrix},\end{equation}
with curvature acting as
\begin{equation}\label{prolcurv}
\kappa \begin{bmatrix} \xi\\A\end{bmatrix}
=
\begin{bmatrix} 0\\-(A\cdot R)\end{bmatrix} \in \Lambda^2\otimes E.
\end{equation}
In order to describe the curvature filtration of Section \ref{sec-curvature filtration} for the Killing connection, we set $\h_0:=\aut(R)\subseteq \so(TM,g)$ and define inductively for $k>0$
\begin{equation}\label{hkdef}
\h_k:=\{A\in \so(TM,g)\mid [A,H]\in \h_{k-1} \text{ for all }H\in \hol\}\ .\end{equation}

\begin{lemma}
If $(M,g)$ is locally symmetric, then  $\hol\subseteq \h_0\subseteq ...\subseteq \h_k \subseteq \h_{k+1} \subseteq \ldots \subseteq \so(TM,g)$ is a   filtration of subalgebras, and the curvature filtration $E_k$ of the Killing connection is given by
\[E_k=\ostack{TM}{\h_k}.\]
\end{lemma}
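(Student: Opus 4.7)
The plan is to prove the two claims: the nesting $\hol\subseteq \h_0\subseteq \h_1\subseteq \cdots \subseteq \so(TM,g)$, and the identification $E_k=\ostack{TM}{\h_k}$. To verify $\hol\subseteq \h_0=\aut(R)$, I would use that local symmetry $\nabla R=0$ makes $R$ invariant under parallel transport, so every element of $\hol$ annihilates $R$ under the action (\ref{action}). That $\h_0$ itself is a Lie subalgebra is the standard Leibniz argument for stabiliser subalgebras. For $\h_{k-1}\subseteq \h_k$ I would induct. The base case $\h_0\subseteq \h_1$ follows from Leibniz, since $[A,H]\cdot R=A\cdot(H\cdot R)-H\cdot(A\cdot R)=0$ for $A,H\in\h_0$, so $[A,H]\in\h_0$ and $A\in\h_1$. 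The inductive step is formal: if $A\in\h_k$, then for each $H\in\hol$ one has $[A,H]\in\h_{k-1}\subseteq \h_k$, whence $A\in\h_{k+1}$.

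For $E_k=\ostack{TM}{\h_k}$ I would again induct, using the curvature of the Killing connection as given by (\ref{prolcurv}). The base case is immediate: $E_0=\ker\kappa=TM\oplus\aut(R)=\ostack{TM}{\h_0}$. For the inductive step, $[\xi,A]\in E_k$ iff $\kappa[\xi,A]\in\Wedge^2\otimes E_{k-1}$, iff $A\cdot R\in\Wedge^2\otimes\h_{k-1}$ by the inductive hypothesis together with (\ref{prolcurv}). Expanding via (\ref{action}),
\[(A\cdot R)(X,Y)=[A,R(X,Y)]-R(AX,Y)-R(X,AY),\]
and observing that $R(AX,Y),\, R(X,AY)\in\hol\subseteq \h_0\subseteq \h_{k-1}$, the condition reduces to $[A,R(X,Y)]\in\h_{k-1}$ for all $X,Y\in TM$. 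Since $\hol$ is spanned by such $R(X,Y)$, this is precisely the defining condition $A\in\h_k$, completing the induction.

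The point of delicacy I anticipate is the literal subalgebra assertion for $k\geq 1$: a Jacobi identity computation yields only the weaker statement $[\h_k,\h_\ell]\subseteq \h_{k+\ell}$, so intermediate $\h_k$ need not be closed under the bracket in general. I would therefore read ``filtration of subalgebras'' as a nested filtration of subspaces lying inside the ambient subalgebra $\so(TM,g)$, which is all that is needed in what follows. If the literal claim is required, it is standard for $\h_0$, and it also holds for the stabilised term $\h_R=\h_{R+1}=\cdots$, since then $[\h_R,\h_R]\subseteq \h_{2R}=\h_R$.
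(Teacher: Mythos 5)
Your argument is correct and, for the main content, coincides with the paper's proof: the base case $E_0=\ker\kappa=TM\oplus\h_0$ is read off from (\ref{prolcurv}), and the inductive step uses (\ref{action}) to reduce the condition $A\cdot R\in\Wedge^2\otimes\h_{k-1}$ to $[A,R(X,Y)]\in\h_{k-1}$ for all $X,Y$, because $R(AX,Y),R(X,AY)\in\hol\subseteq\h_{k-1}$; the paper phrases exactly this as $(A\cdot R)(X,Y)\equiv[A,R(X,Y)]+\hol$, and local symmetry enters through $\hol=\mathrm{span}\{R(X,Y)\}$ and $\hol\subseteq\aut(R)$, just as in your write-up.

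The only divergence concerns the algebraic preliminaries, where you are in fact more explicit than the paper. The paper does not spell out the nesting $\h_k\subseteq\h_{k+1}$ (you prove it by a clean induction from $\h_0\subseteq\h_1$), and it dismisses the subalgebra assertion with the single phrase that it ``follows from the Jacobi identity.'' Your caveat here is legitimate: the straightforward Jacobi/Leibniz computation yields $[\h_j,\h_k]\subseteq\h_{j+k}$ (using that $\hol$ is an ideal in $\h_0$), hence closure under the bracket for $\h_0$ and for the stabilised term $\h_R$, but not obviously for the intermediate terms, and neither your argument nor the paper's one-liner settles that. This does no harm: nothing downstream uses that the intermediate $\h_k$ are closed under the bracket — only the identification $E_k=TM\oplus\h_k$, the nesting, the parallelism established in Proposition~\ref{Ekparallel}, and the explicit computation of $\h_0,\h_1,\h_2$ for Cahen--Wallach spaces in Theorem~\ref{theo:CW_exactness}, where each term happens to be a subalgebra (for instance $\h_1$ is the stabiliser of the null line). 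So your weaker reading of ``filtration of subalgebras'' as a nested chain of subspaces inside $\so(TM,g)$ suffices for all later applications.
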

\begin{proof}
For every locally symmetric space, since $\nabla R=0$, we have that $\hol\subseteq \h_0=\aut(R)$. That the $\h_k$ are subalgebras follows from the Jacobi identity.
The statement for  $E_0$  follows immediately from (\ref{prolcurv}). For $k>0$ we may assume  by induction  that
\[
 \begin{bmatrix} \xi\\A\end{bmatrix}\in E_k\iff  \begin{bmatrix}0 \\A\cdot R\end{bmatrix}\in \Lambda^2\otimes  E_{k-1}\iff
 A\cdot R\in \Lambda^2\otimes\h_{k-1}.\]
However, by (\ref{action}) we have that $ (A\cdot R)(X,Y) \equiv [A,R(X,Y)] + \hol$, which proves the statement of the lemma.
\end{proof}

We can also show that for locally symmetric spaces the curvature filtration is parallel.

\begin{proposition}\label{Ekparallel}
Let $(M,g)$ be a locally symmetric semi-Rieman\-nian manifold and $\overline{\D}$   the connection induced by the Killing connection and the Levi-Civita connection on $\Wedge^2\otimes E$.
Then
then $\overline{\D}\kappa\in \Gamma\left(\Wedge^1\otimes\Wedge^2 \otimes \mathrm{Hom}(E,E)\right)$ is given by
\begin{equation}\label{dkappa}
\overline{\D}_X\kappa(Y,Z) \begin{bmatrix} \xi\\A\end{bmatrix}=  \begin{bmatrix} (A\cdot R)(Y,Z)X\\ 0\end{bmatrix}.\end{equation}
In particular,  $\overline{\D}\kappa \in \Gamma\left( \Wedge^1\otimes \Wedge^2\otimes \mathrm{Hom}(E,TM)\right)$ and, by Lemma \ref{dkappa-lemma}, the curvature filtration $E_k$, $k\ge 0$, is parallel for $\D$.
\end{proposition}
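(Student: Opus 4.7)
The plan is to compute $\overline{\D}\kappa$ directly from its definition via the Leibniz rule, exploit $\nabla R=0$ to arrange two decisive cancellations, and then invoke Lemma~\ref{dkappa-lemma} to conclude parallelism of the filtration.

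First, fix a point $p\in M$ and tangent vectors $X,Y,Z\in T_pM$, extending $Y$ and $Z$ by parallel transport so that $\nabla_XY=\nabla_XZ=0$ at~$p$. Viewing $\kappa$ as a homomorphism $E\to\Wedge^2\otimes E$, the induced connection satisfies
\[
(\overline{\D}_X\kappa)(Y,Z)\begin{bmatrix}\xi\\A\end{bmatrix}
=\D_X\!\left(\kappa(Y,Z)\begin{bmatrix}\xi\\A\end{bmatrix}\right)
-\kappa(Y,Z)\left(\D_X\begin{bmatrix}\xi\\A\end{bmatrix}\right).
\]
I would then substitute the explicit formulas (\ref{connect}) and (\ref{prolcurv}). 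The first summand becomes
\[
\D_X\begin{bmatrix}0\\ -(A\cdot R)(Y,Z)\end{bmatrix}=\begin{bmatrix}(A\cdot R)(Y,Z)X\\ -\nabla_X((A\cdot R)(Y,Z))\end{bmatrix},
\]
and using $\nabla R=0$ together with $\nabla_XY=\nabla_XZ=0$ at $p$, the bottom entry simplifies to $-(\nabla_XA)\cdot R(Y,Z)$. The second summand is
\[
\begin{bmatrix}0\\ -\bigl((\nabla_XA+R(\xi,X))\cdot R\bigr)(Y,Z)\end{bmatrix}.
\]

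The second step is to extract the two cancellations. Subtracting the above, the $(\nabla_XA)\cdot R$ contributions in the $\so(TM,g)$-component cancel identically, leaving $R(\xi,X)\cdot R(Y,Z)$. Since for a locally symmetric space $\hol\subseteq\aut(R)$ (parallel transport preserves $R$), and $R(\xi,X)\in\hol$ by construction, this remaining term vanishes as well. What survives is precisely the top component $(A\cdot R)(Y,Z)X$, establishing (\ref{dkappa}).

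Finally, formula (\ref{dkappa}) shows that the image of $\overline{\D}\kappa$ lies entirely in the $TM$-summand of $E$, i.e.\ in $\mathrm{Hom}(E,TM)$. Since $TM\oplus 0\subset E_0$, this immediately verifies condition~(3) of Lemma~\ref{dkappa-lemma} for every $r\ge 1$. At the bottom of the filtration $r=0$, sections of $E_0$ satisfy $A\cdot R=0$, so the formula gives zero, matching the requirement that the image land in $E_{-1}=0$. Hence the filtration $(E_k)$ is parallel for~$\D$.

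The main obstacle is spotting the two cancellations; without them, the computation would leave extra $\so(TM,g)$-valued terms that obstruct parallelism. Both are direct consequences of local symmetry, but they enter in genuinely different ways: the $\nabla_XA$ cancellation uses $\nabla R=0$ through the Leibniz rule, whereas the $R(\xi,X)\cdot R=0$ cancellation uses the global consequence $\hol\subseteq\aut(R)$.
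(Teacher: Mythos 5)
Your proposal is correct and follows essentially the same route as the paper: a direct Leibniz-rule computation of $\overline{\D}\kappa$, with the two cancellations coming from $\nabla R=0$ and from $R(\xi,X)\in\hol\subseteq\aut(R)$, followed by Lemma~\ref{dkappa-lemma}. The only cosmetic difference is that you normalise $\nabla_XY=\nabla_XZ=0$ at a point instead of carrying the $\kappa(\nabla_XY,Z)$ and $\kappa(Y,\nabla_XZ)$ terms explicitly, and you spell out the $r=0$ versus $r\ge 1$ check of condition (3) slightly more carefully than the paper does.
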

\begin{proof}
The proof of equation (\ref{dkappa}) is a  direct computation: by the definition of the induced connection,
we have that
\begin{eqnarray*}
\lefteqn{
\overline{D}_X\kappa(Y,Z) \begin{bmatrix} \xi\\A\end{bmatrix}=}
\\
&=&
D_X\left(  \kappa(Y,Z) \begin{bmatrix} \xi\\A\end{bmatrix} \right)
-
\kappa(\nabla_XY,Z) \begin{bmatrix} \xi\\A\end{bmatrix}
-
\kappa(Y,\nabla_XZ) \begin{bmatrix} \xi\\A\end{bmatrix}
-
\kappa(Y,Z) D_X\begin{bmatrix} \xi\\A\end{bmatrix}
\\
&=&
 \begin{bmatrix} (A\cdot R) (X,Y)Z
  \\
  -\left(\nabla_X (A\cdot R)\right) (Y,Z) + (\nabla_XA+R(\xi,X))\cdot R(Y,Z) \end{bmatrix}.
\end{eqnarray*}
By local symmetry, $R(\xi,Z) \in \aut(R)$ and thus $R(\xi,Z) \cdot R=0$. For the remaining terms we apply
the Leibniz rule to get $\nabla_X (A\cdot R)=(\nabla_XA) \cdot R+A\cdot (\nabla_XR)$. Then local symmetry, $\nabla_XR=0$, gives  the desired equation  (\ref{dkappa}).

By Lemma \ref{dkappa-lemma}, equation (\ref{dkappa})  implies that the curvature filtration is parallel.
\end{proof}

We conclude this subsection
by
considering the situation when  $\h_0\subseteq \so(TM,g)$ is non-degenerate
with respect to the trace form $B$ in $\so(TM,g)$, so that
\[\so(TM,g)=\h_0\+\h_0^\perp.\]
This holds in particular when $g$ is Rieman\-nian.
Note that the $\mathrm{ad}$-invariance of $B$ immediately implies that
\begin{equation}\label{reductive}
 \left[\h_0,\h_0^\perp\right]\subseteq \h_0^\perp.\end{equation}
Considering the natural embedding $\h_0^\perp\subseteq E$, we have the   decomposition
\[E=E_0\+\h_0^\perp,\]
where $E_0=TM\+\h_0$ is a $D$-invariant and flat sub-bundle and $\kappa$ is injective on $\h_0^\perp$.
Note however that $\h_0^\perp\subseteq E$ is not $D$-parallel. Nevertheless we have the following (as in Corollary \ref{corollary-complement}).
\begin{proposition}\label{complement-prop}

Assume that $(M,g)$ is a semi-Rieman\-nian locally symmetric space such that $\h_0=\aut(R)$ satisfies
$\so(TM,g)=\h_0\+\h_0^\perp$.
 Then $E_0=E_1$. In addition,  $D$ is exact if  the connection ${\nabla}$ that is induced   on  $\h_0^\perp$ from the Levi-Civita connection  of $g$ is exact. \end{proposition}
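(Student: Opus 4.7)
The plan is to establish the two assertions in turn. For $E_0=E_1$, take $\left[\begin{smallmatrix}\xi\\A\end{smallmatrix}\right]\in E_1$ and decompose $A=A_0+A^\perp$ according to $\so(TM,g)=\h_0\+\h_0^\perp$. Since $A_0\in\h_0=\aut(R)$ kills $R$, one has $\kappa\left[\begin{smallmatrix}\xi\\A\end{smallmatrix}\right]=\left[\begin{smallmatrix}0\\-A^\perp\cdot R\end{smallmatrix}\right]$, so the $E_1$-condition becomes $A^\perp\cdot R\in\Wedge^2\otimes\h_0$. Splitting
\[
(A^\perp\cdot R)(X,Y)=[A^\perp,R(X,Y)]-R(A^\perp X,Y)-R(X,A^\perp Y),
\]
the last two terms lie in $\hol\subseteq\h_0$, while by reductivity~\eqref{reductive} the commutator lies in $[\h_0^\perp,\hol]\subseteq\h_0^\perp$. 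Projecting the $E_1$-condition onto $\h_0^\perp$ therefore forces $[A^\perp,R(X,Y)]=0$ for all $X,Y$, i.e.\ $A^\perp$ centralises $\hol$ in $\so(TM,g)$.

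The crucial sub-claim is then that $Z_{\so(TM,g)}(\hol)\subseteq\aut(R)=\h_0$, after which $A^\perp\in\h_0\cap\h_0^\perp=\{0\}$ and $\left[\begin{smallmatrix}\xi\\A\end{smallmatrix}\right]\in E_0$. I expect this to be the principal obstacle. The approach is to reduce to the irreducible factors of the de~Rham–Wu decomposition: on each factor Schur's lemma constrains the centraliser in $\so$ to be $\{0\}$, a single complex structure, or an $\mathfrak{sp}(1)$-triple, and in each case these operators coincide with elements of the isotropy in the underlying symmetric pair $\mathfrak{g}=\mathfrak{h}\+\mathfrak{m}$ and therefore already lie in $\aut(R)$. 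Flat and Cahen--Wallach type factors, being absent under the present non-degeneracy hypothesis or else handled ad hoc, require separate but straightforward verification.

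For the exactness statement, Proposition~\ref{Ekparallel} gives that the curvature filtration is parallel, and the just-established $E_0=E_1$ shows it stabilises at $E_R=E_0$. By Theorem~\ref{thm: prop E/E_r exact implies E exact}, it suffices that the induced connection on $E/E_R=E/E_0$ be exact. Because $\nabla R=0$ the sub-bundle $\h_0=\aut(R)$—and hence $\h_0^\perp$—is parallel for the Levi-Civita connection, so the natural isomorphisms $E/E_0\cong\so(TM,g)/\h_0\cong\h_0^\perp$ identify the quotient connection with the restriction of $\nabla$ to $\h_0^\perp$. Thus exactness of $\nabla$ on $\h_0^\perp$ yields exactness of $D$, completing the argument.
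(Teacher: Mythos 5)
Your reduction is set up correctly and, for the most part, follows the paper: decomposing $A=A_0+A^\perp$, noting $R(A^\perp X,Y),R(X,A^\perp Y)\in\hol\subseteq\h_0$ and invoking the reductivity \eqref{reductive} to conclude $[A^\perp,R(X,Y)]=0$ is exactly how the paper's proof of $E_0=E_1$ begins, and your second part (parallel filtration via Proposition~\ref{Ekparallel}, stabilisation at $E_0$, Theorem~\ref{thm: prop E/E_r exact implies E exact}, and the identification $E/E_0\cong\h_0^\perp$ with the Levi-Civita connection) is the paper's argument, up to the small omitted check that the extra terms $-A(X)$ and $R(\xi,X)$ in the Killing connection lie in $TM\+\h_0=E_0$, so the quotient connection really is $\nabla$ on $\h_0^\perp$.

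The genuine gap is precisely the step you flag as the principal obstacle and do not prove: that $C\in\h_0^\perp$ commuting with every $R(X,Y)$ must annihilate $R$. The route you sketch --- de~Rham--Wu decomposition, Schur's lemma on each irreducible factor, identification of the centralising operators with isotropy elements, with flat and Cahen--Wallach factors ``handled ad hoc'' --- is both unfinished and doubtful at the stated level of generality: the proposition concerns arbitrary semi-Riemannian locally symmetric spaces, where indecomposable factors need not act irreducibly (so Schur's lemma is not directly available), and the centraliser of $\hol$ in $\so(TM,g)$ of a product is not the direct sum of factorwise centralisers (it may contain components interchanging factors with equivalent holonomy representations and arbitrary blocks on flat factors), none of which your sketch addresses. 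The paper closes this gap with a short observation that renders the whole case analysis unnecessary: $[C,R(X,Y)]=0$ for all $X,Y$ says in indices that $R_{ab}{}^e{}_{[c}C_{d]e}=0$ identically, and by the pairwise symmetry $R_{abcd}=R_{cdab}$ the remaining part of $C\cdot R$ is this same tensor with the index pairs interchanged; hence $R(CX,Y)+R(X,CY)=0$ as well, so $C\cdot R=0$ and $C\in\aut(R)\cap\h_0^\perp=0$. You should replace the sketched sub-claim with this symmetry argument (or else supply a complete, signature-robust proof along your proposed lines, which would be substantially longer).
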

\begin{proof}
If  $\eta =\eta_0+ C \in E_1$ with $\eta_0\in E_0=TM\+\h_0$ and $C\in \h_0^\perp$, then
\[
\kappa(\eta)
=
\kappa \left(\begin{bmatrix} 0\\C\end{bmatrix}\right)
=
\begin{bmatrix} 0\\C\cdot R\end{bmatrix}
 \in \Wedge^2 \otimes E_0.\]
 Hence, $C\cdot R\in \Wedge^2 \otimes \h_0$, which, by local symmetry and (\ref{action}), implies that
 $\left[ C,R(X,Y)\right]\in \h_0$ for all $X,Y\in TM$. With $R(X,Y)\in \h_0$ and (\ref{reductive}), this implies that
$ \left[ C,R(X,Y)\right]\in \h_0\cap\h_0^\perp$. Therefore, the assumption implies that
 $\left[ C,R(X,Y)\right]=0$ for all $X,Y\in TM$. By the pairwise symmetry of $R$, this  also implies that for all $X,Y\in TM$,
 \[R(CX,Y)+R(X,CY)=0.\]
Consequently, $C\cdot R=0$, so $C\in \h_0$, and thus $E_1=E_0$.

From Theorem \ref{thm: prop E/E_r exact implies E exact} it follows that $D$ on $E$ is exact if the connection that is induced on $E/E_0=\h_0^\perp$ is exact. However, for $C\in \h_0^\perp$ we have that
\[
\D_X\begin{bmatrix} 0\\C\end{bmatrix}=\begin{bmatrix}-C(X)\\\nabla_XC\end{bmatrix} \equiv \begin{bmatrix}0\\\nabla_XC\end{bmatrix} \mod{\quad TM\+\h_0}.\]
Since $\h_0^\perp$ is taken with respect to the metric on $\so(TM)$ induced from $g$, which is parallel with respect to $\nabla$, we get that $\nabla_XC$ is a section of $\h_0^\perp$, so the induced connection on $\h_0^\perp$ is indeed the Levi-Civita connection.
\end{proof}

\begin{remark}\label{riemremark}
In \cite{CostanzaEastwoodLeistner21} we have shown that the Levi-Civita connection on $\h_0^\perp$
is exact whenever $(M,g)$ is a Rieman\-nian locally symmetric space that is
irreducible (i.e.~indecomposable) or decomposable but without Hermitian and
flat factor.
More specifically, when $(M,g)$ is irreducible, the curvature
filtration already stabilises at $E_0$, the statement that the Levi-Civita
connection is exact on $\h_0^\perp$ is that Equation~(15) from \cite{CostanzaEastwoodLeistner21}
forces $X_{bcd}=0$, and this follows from Lemma~1 of \cite{CostanzaEastwoodLeistner21} by means
of Equation~(16), which is the trace of Equation~(15).
\end{remark}

\subsection{Products of locally symmetric spaces}
In this section we consider exactness  of the Killing connection for a product of two locally symmetric semi-Riemannian spaces, provided we have some information on  one of the spaces.
Our notation will be as follows: let $g+\bar g $ be a product metric on the product manifold $M\times \bar M$ of locally symmetric symmetric semi-Riemannian spaces $(M,g)$ and $(\bar M,\bar g)$ and let  $\Wedge^1$, $\Wedge^{1,0}$ and $\Wedge^{0,1}$ the respective  bundles of one-forms. We do not distinguish between bundles on $M$ and $\bar M$ and their pull-backs to $M\times \bar M$, so that on the product we have $\Wedge^1=\Wedge^{1,0}\+\Wedge^{0,1}$, and similarly for the two-forms $\Wedge^2=\Wedge^{2,0}\+\Wedge^{1,1}\+\Wedge^{0,2}$.
Then  we have that the curvature tensor of the product, which we denote by $R$, abusing notation,  satisfies
\begin{equation}
\label{nomixedR}
R\in \Gamma\left( (\Wedge^{2,0}\otimes \Wedge^{2,0}) \+(\Wedge^{0,2}\otimes \Wedge^{0,2})\right)
.\end{equation}

We denote the Killing bundles of $(M,g)$ and $(\bM,\bg)$ by $E$ and $\bE$. Again, by abuse of notation, we denote by $D$ the Killing connection of $(M\times \bM,g+\bg)$ on $\Wedge^1\+\Wedge^2$.
 Because $\Wedge^2$ on $M\times \bM$ decomposes as
$\Wedge^2=\Wedge^{2,0}\+\Wedge^{0,2}\+\Wedge^{1,1}$, the Killing bundle of $(M\times \bM, g+\bg)$ decomposes as
\[\begin{array}{c}
\Wedge^1\\
\+\\
\Wedge^2\end{array}=E\+\bE\+\Wedge^{1,1}=
\begin{array}{ccccc}
\Wedge^{1,0}&&\Wedge^{0,1}&\\
\+&\+&\+&&\\
\Wedge^{2,0}&&\Wedge^{0,2}&\+&\Wedge^{1,1}.
\end{array}.
\]
Since the curvature tensor satisfies (\ref{nomixedR}),  it is immediate from the definition of the Killing connection that $E $ and $ \bE$ are  parallel sub-bundles of the Killing bundle of the product.
Hence, $E\+\bE$ is also parallel and
 Proposition~\ref{prop:pullback_exactness} implies that $D|_{E\+\bE}$ is an exact connection on $E\+\bE$ 
 if the original connections on $E$ and $ \bE$ are exact. This allows to apply Corollary~\ref{coro:ker_D_exact} to obtain the following important criterion.

		\begin{proposition}\label{prop:products_exactness_aid}
			Let $(M, g)$ and $(\bM, \bg)$ be semi-Rieman\-nian locally symmetric spaces such that their Killing connections are exact. If the Killing connection $D$  of $(M\times\bM,g+\bg)$ satisfies  $\ker(D^{\wedge}) \subseteq
			\Gamma\left(\Wedge^1\otimes (E\+\bE)\right)$,
			 then  $D$ is exact.
		\end{proposition}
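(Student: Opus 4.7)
The plan is essentially to verify that the hypotheses of Corollary~\ref{coro:ker_D_exact} are met with the parallel subbundle taken to be $F = E \oplus \bE$ inside the full Killing bundle of $M\times\bM$. The bulk of the work is already done in the paragraph preceding the statement, so the proof should consist mostly of assembling these pieces.

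First I would observe that, since the curvature tensor $R$ of the product satisfies the ``no mixed components'' property $R \in \Gamma((\Wedge^{2,0}\otimes\Wedge^{2,0}) \oplus (\Wedge^{0,2}\otimes\Wedge^{0,2}))$ noted in~(\ref{nomixedR}), the Killing connection $D$ on $M \times \bM$ preserves the decomposition of the full Killing bundle as $E \oplus \bE \oplus \Wedge^{1,1}$. In particular, $F := E \oplus \bE$ is a $D$-parallel subbundle, and $D|_F$ is the direct sum of the pullbacks $\pi^{*}D$ and $\bar\pi^{*}\bar D$ of the Killing connections of the two factors.

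Next I would invoke Proposition~\ref{prop:pullback_exactness}: by hypothesis the Killing connections on $(M,g)$ and $(\bM,\bg)$ are exact, and that proposition then guarantees that $\pi^{*}D + \bar\pi^{*}\bar D$ is an exact connection on $\pi^{*}E \oplus \bar\pi^{*}\bar E$. In other words, $D|_F$ is exact as a connection on $F$.

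At this point the hypothesis $\ker(D^{\wedge}) \subseteq \Gamma(\Wedge^1 \otimes F)$ is exactly the remaining ingredient required by Corollary~\ref{coro:ker_D_exact}, applied with our parallel subbundle $F = E \oplus \bE \subseteq E \oplus \bE \oplus \Wedge^{1,1}$ and the exact restriction $D|_F$. The corollary then immediately yields exactness of $D$ on the full Killing bundle, completing the proof. There is no real obstacle here; the only point that merits a sentence of justification is that the product-type curvature~(\ref{nomixedR}) actually makes $E$ and $\bE$ parallel (and not merely invariant under the curvature), which follows by direct inspection of the formula~(\ref{killing_connection}) for the Killing connection, since the cross terms $\nabla_{\bar b}\sigma_c$ and $R_{cd}{}^e{}_{\bar b}\sigma_e$ either vanish or stay within the respective factor.
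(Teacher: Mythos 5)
Your proposal is correct and follows essentially the same route as the paper: the paper likewise observes that (\ref{nomixedR}) makes $E\+\bE$ a parallel subbundle on which $D$ restricts to $\pi^{*}D+\bar\pi^{*}\bar D$, invokes Proposition~\ref{prop:pullback_exactness} for exactness of this restriction, and then applies Corollary~\ref{coro:ker_D_exact} with the hypothesis $\ker(D^{\wedge})\subseteq\Gamma\bigl(\Wedge^1\otimes(E\+\bE)\bigr)$. No gaps.
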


		We will now determine in which cases a product of locally symmetric spaces $(M\times \bM, g+\bg)$ with exact Killing connections satisfy $\ker(D^{\wedge}) \subseteq \Gamma\left( \Wedge^1\otimes (E\+\bE)\right)
$. We return to  use Penrose's abstract indices and we denote indices from $\Wedge^{1,0}$ with $a,b,\ldots $, indices from $\Wedge^{0,1}$ with barred indices $\bar{a}, \bar{b},\ldots$ and capital indices $A,B,\ldots $ for both groups of indices. We denote by $\nabla_A$ the Levi-Civita connection of the product metric and by $R_{ABCD}$ its curvature tensor.
 In this notation, if $D$ is the Killing connection of the product, \[D^{\wedge} : \Wedge^1 \otimes (\Wedge^1\+\Wedge^2)) \to \Gamma(\Wedge^{2} \otimes (\Wedge^1\+\Wedge^2),\] takes the form
		\begin{equation}\label{eqn:D_wedge}
		D^{\wedge}_{A}
		\begin{bmatrix}
			\alpha_{BC} \\
			\psi_{BCD}
		\end{bmatrix}
		=
		\begin{bmatrix}
			\nabla_{[A} \alpha_{B]C} + \psi_{[AB]C}\\
			\nabla_{[A} \psi_{B]CD} - R_{CD}{}^E{}_{[A} \alpha_{B]E}
		\end{bmatrix}.
		\end{equation}
		Notice that for $(\alpha, \psi) \in \ker(D^{\wedge})$, since the curvature tensor satisfies (\ref{nomixedR}), the second of the resulting equations in  (\ref{eqn:D_wedge}), with $ABCD = ABc\bar{d}$, yields the constraint
		\[
		\nabla_{[A}\psi_{B]c\bar{d}} = 0.
		\]
This means that the $\Wedge^1 \otimes \Wedge^{1,1}$ component of $\psi_{BCD}$ must be contained in the kernel of $\nabla^\w : \Wedge^1 \otimes \Wedge^{1,1} \to \Wedge^{2}  \otimes\Wedge^{1,1}$. The following lemma then shows that the curvature tensors of the factors cannot be injective.

		\begin{lemma}\label{prop:dnabla_kernel}
			On a product of semi-Rieman\-nian manifolds with curvature tensor $R_{ABCD}$, let $\phi_{BCD} \in \Gamma(\wedge^{1} \otimes \wedge^{1, 1})$ be a solution of
			\begin{equation}\label{eqn:nabla_wedge}
			\nabla_{[A}\phi_{B]CD} = 0.
			\end{equation}
			Then
			\begin{equation}\label{eqn:ker_nabla_wedge}
				R_{ab}{}^e{}_c \phi_{\bar{a} e \bar{b}} = 0, \quad \text{and} \quad R_{\bar{a}\bar{b}}{}^{\bar{e}}{}_{\bar{c}} \phi_{ab \bar{e}} = 0.
			\end{equation}
		\end{lemma}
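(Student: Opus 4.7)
Plan. The idea is to extract each identity in~(\ref{eqn:ker_nabla_wedge}) as an integrability consequence of~(\ref{eqn:nabla_wedge}): differentiate the hypothesis once more, commute derivatives via the Ricci identity, and exploit the block-diagonal structure~(\ref{nomixedR}) of the curvature on a product. A key preliminary observation is that~(\ref{nomixedR}) forces the commutator $[\nabla_a,\nabla_{\bar b}]$ to vanish on any tensor, since every component of $R_{a\bar b}{}^{F}{}_{G}$ is mixed; hence unbarred and barred covariant derivatives commute freely on $\phi$ throughout the argument.

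For the first identity in~(\ref{eqn:ker_nabla_wedge}), I would specialize the hypothesis $\nabla_{[A}\phi_{B]CD}=0$ to mixed index types. Setting $(A,B)=(a,\bar b)$ gives $\nabla_a\phi_{\bar b c\bar d}=\nabla_{\bar b}\phi_{ac\bar d}$, and setting $(A,B)=(b,\bar b)$ gives $\nabla_b\phi_{\bar b c\bar d}=\nabla_{\bar b}\phi_{bc\bar d}$. Applying $\nabla_b$ to the first relation, $\nabla_a$ to the second, subtracting, and using that $\nabla_{\bar b}$ commutes with $\nabla_a$ and $\nabla_b$, one obtains
\[
[\nabla_a,\nabla_b]\phi_{\bar b c\bar d}\;=\;\nabla_{\bar b}\bigl(\nabla_a\phi_{bc\bar d}-\nabla_b\phi_{ac\bar d}\bigr),
\]
and the right-hand side vanishes by the $(A,B)=(a,b)$ case of the hypothesis.

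Now expand the left-hand side via the Ricci identity, which contributes one curvature term per lower index of $\phi_{\bar b c\bar d}$. The block structure~(\ref{nomixedR}) makes $R_{ab}{}^{F}{}_{\bar b}$ and $R_{ab}{}^{F}{}_{\bar d}$ vanish identically for every index type $F$, because the unbarred $2$-form $R_{ab}$ is valued in the unbarred part of $\so(TM,g)$. Only the middle contribution $R_{ab}{}^{e}{}_{c}\phi_{\bar b e\bar d}$ survives, and it must therefore be zero. Renaming dummy indices ($\bar b\mapsto\bar a$, $\bar d\mapsto\bar b$) yields the first identity.

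The second identity follows by the entirely symmetric argument with barred and unbarred indices exchanged throughout: specialize to $(A,B)=(\bar a,b)$ and $(\bar b,b)$, commute derivatives using the vanishing of $[\nabla_{\bar a},\nabla_b]$, and apply Ricci with the barred $R_{\bar a\bar b}$ killing the unbarred slots of $\phi$. No genuine obstacle is anticipated; the proof is essentially a matter of carefully tracking which curvature components survive on a product, and the only subtle ingredient is the freedom to commute mixed derivatives granted by~(\ref{nomixedR}).
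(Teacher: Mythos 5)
Your proof is correct and follows essentially the same route as the paper's: both combine the three index-type specializations of $\nabla_{[A}\phi_{B]CD}=0$ with one further derivative, the vanishing of the mixed curvature components guaranteed by (\ref{nomixedR}) (so barred and unbarred derivatives commute), and the Ricci identity for $[\nabla_a,\nabla_b]$ acting on $\phi_{\bar a c\bar b}$, from which only the pure curvature term $R_{ab}{}^e{}_c\phi_{\bar a e\bar b}$ survives (and symmetrically for the barred identity). The paper merely carries out the same manipulation in a slightly different order, differentiating the unbarred equation with a barred derivative and then substituting the mixed equation, so there is no substantive difference.
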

		\begin{proof}
		Rewriting equation (\ref{eqn:nabla_wedge}) using barred and unbarred indices, we obtain
		\[
		\nabla_{[a} \phi_{b]c\bar{a}} = 0, \quad \nabla_{[\bar{a}} \phi_{\bar{b}]a\bar{c}} = 0, \quad  \nabla_{a} \phi_{\bar{a}b\bar{b}} - \nabla_{\bar{a}} \phi_{ab\bar{b}} = 0.
		\]
		Differentiating the first equation with respect to the barred indices, we have
		\[
			0  =  \nabla_{\bar{a}} (\nabla_{a} \phi_{bc\bar{b}} - \nabla_{b} \phi_{ac\bar{b}})
			    =  \nabla_{a} \nabla_{\bar{a}} \phi_{bc\bar{b}} - \nabla_{b} \nabla_{\bar{a}} \phi_{ac\bar{b}}.
		\]
		The third equation implies that
		\[
			 \nabla_{a} \nabla_{\bar{a}} \phi_{bc\bar{b}} - \nabla_{b} \nabla_{\bar{a}} \phi_{ac\bar{b}}=  \nabla_{a} \nabla_{b} \phi_{\bar{a}c\bar{b}} - \nabla_{b} \nabla_{a} \phi_{\bar{a}c\bar{b}}
			     =- R_{ab}{}^e{}_c \phi_{\bar{a}e\bar{b}},
		\]
		and therefore $R_{ab}{}^e{}_c \phi_{\bar{a}e\bar{b}} = 0$. Analogously, differentiating the second equation with respect to the unbarred indices, we obtain $R_{\bar{a}\bar{b}}{}^{\bar{e}}{}_{\bar{c}} \phi_{ab\bar{e}} = 0$, as claimed.
		\end{proof}
		Consequently, Lemma \ref{prop:dnabla_kernel} provides us with constraints for a section of $\Wedge^1 \otimes \Wedge^{1,1}$  to be in the kernel of $\nabla^\w$ when one of the factors has an injective curvature tensor.

		\begin{proposition}\label{prop:small_exactness}
			Let $(M,g)$ be a semi-Rieman\-nian locally symmetric space  that is either
	\begin{enumerate}
	\item  of non-zero constant sectional curvature, or
	\item non-Hermitian indecomposable Rieman\-nian.
\end{enumerate}
If $(\bM,\bg)$ is  another semi-Rieman\-nian locally symmetric space with exact Killing connection, then the Killing connection of the product $(M\times \bM, g+\bg)$ is exact.
		\end{proposition}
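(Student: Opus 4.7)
The plan is to apply Proposition~\ref{prop:products_exactness_aid}, which replaces the claim by two requirements on $M\times\bM$: exactness of the Killing connections on each factor, and the inclusion $\ker(D^\wedge)\subseteq\Gamma(\Wedge^1\otimes(E\oplus\bE))$. The first requirement is essentially in hand. For $\bM$ it is the standing hypothesis. For $M$ in case~(1), the constant sectional curvature forces $\aut(R)=\so(TM,g)$, so the curvature filtration collapses at $E_0=E$ and Proposition~\ref{EeqEk} delivers exactness. In case~(2), Remark~\ref{riemremark} together with Proposition~\ref{complement-prop} handles it.

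The real work is the kernel inclusion. Given $(\alpha,\psi)\in\ker(D^\wedge)$, I would let $\phi$ denote the projection of $\psi$ onto $\Wedge^1\otimes\Wedge^{1,1}$. Using~(\ref{nomixedR}), the second row of~(\ref{eqn:D_wedge}) with mixed internal indices $CD=c\bar d$ reduces to $\nabla^\wedge\phi=0$, and Lemma~\ref{prop:dnabla_kernel} then yields the two algebraic conditions $R_{ab}{}^e{}_c\phi_{\bar a e\bar b}=0$ and $R_{\bar a\bar b}{}^{\bar e}{}_{\bar c}\phi_{ab\bar e}=0$. Of these, only the first is accessible through the hypothesis on~$M$. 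In case~(1), substituting $R_{ab}{}^e{}_c=\lambda(\delta^e_a g_{bc}-\delta^e_b g_{ac})$ with $\lambda\ne 0$ and tracing with $g^{bc}$ yields $(n-1)\phi_{\bar a a\bar b}=0$, killing the $\Wedge^{0,1}\otimes\Wedge^{1,1}$ component of $\phi$. In case~(2), the indecomposability of $M$ makes its holonomy act irreducibly on $T^*M$, so the covector $\phi_{\bar a\cdot\bar b}$, being holonomy-invariant, must also vanish.

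What remains is to eliminate the $\Wedge^{1,0}\otimes\Wedge^{1,1}$ component $\tilde\phi$ of $\phi$. The mixed identity $\nabla_a\phi_{\bar a'c\bar d}=\nabla_{\bar a'}\phi_{ac\bar d}$ from the proof of Lemma~\ref{prop:dnabla_kernel}, combined with the vanishing just established, shows $\tilde\phi$ to be $\bar\nabla$-parallel along $\bM$. The closedness $\nabla_{[a}\tilde\phi_{a']b\bar c}=0$ gives control along $M$, and the first row of~(\ref{eqn:D_wedge}) reads $\tilde\phi_{[ab]\bar c}=-\nabla_{[a}\alpha_{b]\bar c}$, pinning down the antisymmetric part of $\tilde\phi$ in $(a,b)$ in terms of $\alpha$. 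Propagating this coupling through the remaining kernel equations and invoking the strong curvature of $M$ a second time should complete the elimination $\tilde\phi=0$; the required kernel inclusion then follows and Proposition~\ref{prop:products_exactness_aid} finishes the argument.

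The main obstacle is this final step. The hypothesis on $\bM$ is only that its Killing connection be exact, which in no way implies injectivity of its curvature, so the second algebraic constraint from Lemma~\ref{prop:dnabla_kernel} is of no direct use. The vanishing of $\tilde\phi$ must instead be extracted from the coupling between $\alpha$ and $\psi$ in the kernel together with the rigidity of $M$'s curvature — precisely the setting in which the non-Hermitian or constant-curvature hypothesis on $M$ must do its essential work.
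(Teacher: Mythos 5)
Your overall strategy is the paper's: reduce to Proposition~\ref{prop:products_exactness_aid}, split the mixed two-form part $\phi\in\Gamma(\Wedge^1\otimes\Wedge^{1,1})$ off from $(\alpha,\psi)$, use the mixed component of the kernel equations to get $\nabla^\wedge\phi=0$, and kill the $\Wedge^{0,1}\otimes\Wedge^{1,1}$ component via Lemma~\ref{prop:dnabla_kernel} together with the trivial kernel of $R_{abc}{}^d$ on the factor $M$ (your holonomy-invariance argument in case (2) is essentially the right justification for that trivial kernel). But the proof stops exactly where the real work begins: you never establish $\phi_{ab\bar c}=0$, and you say yourself that this is the main obstacle. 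This is a genuine gap, not a routine verification, and it is precisely the step where the hypotheses must act. First, the skew part: from $\nabla_{[b}\phi_{c]d\bar e}=0$ one gets the algebraic identity $R^{e}{}_{d[ab}\phi_{c]e\bar e}=0$; in case (1) this forces $\phi_{[ab]\bar e}=0$ by the explicit form of constant curvature, but in case (2) tracing it yields the eigenvalue equation $R_{ac}{}^{ef}\omega_{ef}=\pm2\,\omega_{ac}$ for $\omega_{ab}=X^{\bar e}\phi_{[ab]\bar e}$, and one must invoke \cite[Theorem~2]{CostanzaEastwoodLeistner21} to conclude $\omega=0$ --- this is the \emph{only} place the non-Hermitian hypothesis enters, and without it the statement is false (Example~\ref{coro:non_exactness}). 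Your sketch never uses that hypothesis concretely (``rigidity'' and ``strong curvature'' are not arguments), so it could not be completed as written.

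Second, the symmetric part: the paper uses exactness of the Killing connection on $M$ \emph{fibrewise} (with the $\bM$ variables as parameters) to write $\alpha_{ab}=\nabla_a\sigma_b-\mu_{ab}$, $\psi_{abc}=\nabla_a\mu_{bc}-R_{bc}{}^e{}_a\sigma_e$, introduces $\theta_{\bar a b}=\alpha_{\bar a b}-\nabla_{\bar a}\sigma_b$, and shows from the remaining kernel components that $\phi_{bc\bar a}=\nabla_{(b}\theta_{|\bar a|c)}$ while $R_{cd}{}^e{}_b\theta_{\bar a e}=\nabla_b\nabla_{[c}\theta_{|\bar a|d]}$; only then does \cite[Lemma~3]{CostanzaEastwoodLeistner21} give $\nabla_{(b}\theta_{|\bar a|c)}=0$, hence $\phi=0$. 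None of this chain (the fibrewise solution $(\sigma,\mu)$, the tensor $\theta$, the key identity, the appeal to Lemma~3 of the earlier paper) appears in your proposal; your observation that $\phi_{[ab]\bar c}=-\nabla_{[a}\alpha_{b]\bar c}$ and that $\phi_{ab\bar c}$ is parallel along $\bM$ does not by itself close the argument. So the proposal correctly reproduces the setup and the easier half of the kernel analysis, but the decisive elimination of $\phi_{ab\bar c}$ --- both its skew part via the non-Hermitian/constant-curvature dichotomy and its symmetric part via fibrewise exactness on $M$ --- is missing.
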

		\begin{proof}
		Let $(\alpha, \psi + \phi)$ be in the kernel of $D^{\wedge}$, with $(\alpha, \psi) \in \Gamma(\Wedge^1 \otimes (E \oplus \bE))$ and $\phi \in \Gamma(\Wedge^1 \otimes \Wedge^{1,1})$. We will show that $\phi = 0$ and consequently, the exactness of $D$ will follow from Proposition \ref{prop:products_exactness_aid}.

		The $\Wedge^{2} \otimes \Wedge^{2}$ component of equation (\ref{eqn:D_wedge}), with $CD = c\bar{d}$, becomes
		\[
		\nabla_{[A} \phi_{B]c\bar{d}} = 0,
		\]
		i.e. $\phi$ is in the kernel of $\nabla^{\wedge}$. By hypothesis, $(M,g)$ is semi-Rieman\-nian of non-zero constant sectional curvature or a non-Hermitian indecomposable Rieman\-nian symmetric space, for which $R_{abc}{}^d$ has trivial kernel as a homomorphism from $\Wedge^{1,0}$ to $ \Wedge^{2,0}\otimes\Wedge^{1,0}$.
		 Therefore, Lemma \ref{prop:dnabla_kernel} guarantees us that $\phi_{\bar{a}a\bar{b}} = 0$.

		Now we will show that $\phi_{ab\bar{a}} = \phi_{(ab)\bar{a}}$. Since $\nabla_{[b}\phi_{c]d\bar{e}} = 0$, we have
		\begin{equation}\label{eqn:genericity}
			0 = \nabla_{[a}\nabla_{b}\phi_{c]d\bar{e}} = - R^{e}_{\;\; d[ab}\phi_{c]e\bar{e}}.
		\end{equation}
		Fixing $X^{\bar{e}}$, set $X^{\bar{e}}\phi_{ab\bar{e}}=h_{ab} + \omega_{ab} $, with $h_{ab} = X^{\bar{e}}\phi_{(ab)\bar{e}}$ and $\omega_{ab} = X^{\bar{e}}\phi_{[ab]\bar{e}}$. Contracting equation (\ref{eqn:genericity}) with $X^{\bar{e}}$ we obtain
		\begin{equation}\label{eqn:genericity_2}
			0 = R^e{}_{d[ab} h_{c]e} + R^e{}_{d[ab} \omega_{c]e}.
		\end{equation}
		In the case of (1), i.e.~that $(M,g)$ is semi-Rieman\-nian of non-zero constant sectional curvature, we have that $R_{abcd}$ is a non-zero constant multiple of $ g_{ac}g_{bd}-g_{ad}g_{bc}$, so that  equation~(\ref{eqn:genericity_2}) gives
\[
0=
g_{e[a}h_{c}{}^eg_{b]d}
-
g_{e[b}h_{c}{}^eg_{a]d}
+
g_{e[a}\omega_{c}{}^eg_{b]d}
-
g_{e[b}\omega_{c}{}^eg_{a]d}
=
-2		g_{e[a}\omega_{b}{}^eg_{c]d},
\]
since $h_{ab}$ is symmetric.
But this is nothing else than
\[0=	 \omega_{[ab}g_{c]d},\]
which implies that $\omega_{ab}=0$.

In case (2), when $(M,g)$ is non-Hermitian indecomposable Rieman\-nian, tracing equation (\ref{eqn:genericity_2}) over $bd$ we have
		\begin{equation}\label{eqn:genericity_traced}
			0 = 2 R^{e}{}_{[a}h_{c]e} + 2 R^{e}{}_{[a} \omega_{c]e} + R_{ac}{}^{ef} \omega_{ef}.
		\end{equation}
		As $(M,g)$ is an indecomposable Rieman\-nian symmetric space, it is Einstein and without loss of generality, we can assume $R_{ab} = \pm  g_{ab}$. Then equation (\ref{eqn:genericity_traced}) becomes the eigenvalue equation
		\begin{equation}\label{eqn:eigenvalue}
R_{ac}{}^{ef}\omega_{ef}= \pm 2 \omega_{ac}
		\end{equation}
		for a 2-form on $M$.  It was proved in  \cite[Theorem 2]{CostanzaEastwoodLeistner21} that  if equation (\ref{eqn:eigenvalue}) holds 
		on an indecomposable Riemannian locally symmetric space $(M,g)$, then  $(M,g)$ is a Hermitian locally symmetric space  with  $\omega_{ab}$  a  multiple of its K\"ahler form. Therefore, by our assumption on $(M,g)$, we obtain as well  that $ \omega_{ab} = 0$.

		Hence, in both cases we have
		$X^{\bar{c}}\phi_{[ab]\bar{c}} = \omega_{ab} = 0$.
 This holds for every vector field $X^{\bar{a}}$ and therefore it must hold that $\phi_{[ab]\bar{a}} = 0$.

			From equation (\ref{eqn:D_wedge}) with $ABCD = abcd$, we know that
			\begin{equation}\label{eqn:alpha_psi_unbarred}
			\alpha_{ab} = \nabla_{a}\sigma_{b} - \mu_{ab} \quad \text{and} \quad \psi_{abc} = \nabla_{a}\mu_{bc} - R_{bc}{}^e{}{}_{a}\sigma_{e}
			\end{equation}
		by the exactness of the Killing connection of $(M,g)$, for some $(\sigma, \mu) \in \Gamma(E_{0})$. Notice that $\alpha_{ab}$ and $\psi_{abc}$  still depend on variables from $\bM$. From the $\Wedge^{2}\otimes \Wedge^1$ component in equation (\ref{eqn:D_wedge}), with $ABC = \bar{a}bc$, and equation (\ref{eqn:alpha_psi_unbarred}), we have
		\[
		\nabla_{\bar{a}}(\nabla_{b}\sigma_{c} - \mu_{bc}) - \nabla_{b} \alpha_{\bar{a}c} + \psi_{\bar{a}bc} + \phi_{bc\bar{a}} = 0.
		\]
		Defining $\theta_{\bar{a}b} := \alpha_{\bar{a}b} - \nabla_{\bar{a}}\sigma_{b}$, it takes the form
		\begin{equation}\label{eqn:alpha_psi_phi}
			- \nabla_{b}\theta_{\bar{a}c} - \nabla_{\bar{a}}\mu_{bc} + \psi_{\bar{a}bc} + \phi_{bc\bar{a}} = 0.
		\end{equation}
		Symmetrising and skew-symmetrising equation (\ref{eqn:alpha_psi_phi}) in $bc$, we obtain
		\begin{equation}
			\psi_{\bar{a}bc} = \nabla_{[b} \theta_{|\bar{a}|c]} + \nabla_{\bar{a}} \mu_{bc} \quad \text{and} \quad \phi_{bc\bar{a}} = \nabla_{(b} \theta_{|\bar{a}|c)},
		\end{equation}
		as $\psi_{\bar{a}bc} = \psi_{\bar{a}[bc]}$ and $\phi_{ab\bar{a}} = \phi_{(ab)\bar{a}}$. Then
		\begin{equation}\label{eqn:mixed_D_wedge_1}
			\begin{array}{rcl}
				\nabla_{\bar{a}} \psi_{bcd} - \nabla_{b}\psi_{\bar{a}cd} & = & \nabla_{\bar{a}}(\nabla_{b}\mu_{cd} - R_{cd}{}^e{}_b \sigma_{e}) - \nabla_{b}(\nabla_{[c} \theta_{|\bar{a}|d]} + \nabla_{\bar{a}} \mu_{cd}) \\
				& = & (\nabla_{\bar{a}}\nabla_{b} - \nabla_{b}\nabla_{\bar{a}}) \mu_{cd} - R_{cd}{}^e{}_b \nabla_{\bar{a}} \sigma_{e} - \nabla_{b} \nabla_{[c} \theta_{|\bar{a}|d]} \\
				& = & - R_{cd}{}^e{}_b \nabla_{\bar{a}} \sigma_{e} - \nabla_{b} \nabla_{[c} \theta_{|\bar{a}|d]} .
			\end{array}
		\end{equation}
		The $\Wedge^{2} \otimes \Wedge^{2}$ component of equation (\ref{eqn:D_wedge}) with $ABCD = \bar{a}bcd$ is
		\begin{equation}\label{eqn:mixed_D_wedge_2}
		\nabla_{\bar{a}} \psi_{bcd} - \nabla_{b}\psi_{\bar{a}cd} = - R_{cd}{}^e{}_b \alpha_{\bar{a}e}.
		\end{equation}
		Combining equations (\ref{eqn:mixed_D_wedge_1}) and (\ref{eqn:mixed_D_wedge_2}) we obtain
		\[
		R_{cd}{}^e{}_b (\alpha_{\bar{a}e} - \nabla_{\bar{a}} \sigma_{e}) = \nabla_{b} \nabla_{[c} \theta_{|\bar{a}|d]}.
		\]
		or
		\begin{equation}\label{eqn:the_eqn}
		R_{cd}{}^e{}_b \theta_{\bar{a}e} = \nabla_{b} \nabla_{[c} \theta_{|\bar{a}|d]}.
		\end{equation}
		Since $(M,g)$ is a space of non-zero constant sectional curvature or a non-Hermitian indecomposable Rieman\-nian symmetric space, by \cite[Lemma 3]{CostanzaEastwoodLeistner21}, equation (\ref{eqn:the_eqn}) implies that
		\begin{equation}
			\nabla_{b}\theta_{\bar{a}c} = \nabla_{[b}\theta_{|\bar{a}|c]}.
		\end{equation}
		This means that $\phi_{bc\bar{a}} = \nabla_{(b}\theta_{|\bar{a}|c)} = 0$ and therefore $\phi_{ABC} = 0$. We have shown that $\ker(D^{\wedge}) \subseteq \Gamma(\Wedge^1\otimes (E \oplus \bE))$ and therefore, since $D|_{E \oplus \bE}$ is exact, $D$ is exact by Corollary~\ref{coro:ker_D_exact}.
		\end{proof}

 Hermitian symmetric spaces have to be excluded in Proposition \ref{prop:small_exactness}, as it is  illustrated by the following example, a version of which  already appeared in \cite[Proposition 4]{CostanzaEastwoodLeistner21}.
		\begin{example}\label{coro:non_exactness}
			Let $(M,g)$ be an indecomposable Rieman\-nian Hermitian symmetric space and let $(\bM,\bg)$ be a symmetric space  with non-injective curvature  $R_{\bar a \bar b\bar c}{}^{\bar d}$, i.e.~with a parallel  one-form $\xi_{\bar{a}}$.
			Then the Killing connection of $(M \times \bM,g+\bg) $ is not exact.

To see this, and here
we
use the previous index conventions and follow \cite[Proposition~4]{CostanzaEastwoodLeistner21}, let $\omega_{AB}=\omega_{ab}\in \Gamma\left( \Wedge^{2,0}\right)$ be the K\"{a}hler form on $M$ with local potential $\phi_B=\phi_b\in \Gamma\left( \Wedge^{1,0}\right)$, i.e.~$\nabla_{[A}\phi_{B]}=\omega_{AB}$, and $\xi_A=\xi_{\bar a}\in \Gamma\left( \Wedge^{0,1}\right))$ be the parallel vector field on $\bM$. We set $h_{AB}=\phi_{(A}\xi_{B)}$,
\[\psi_{BCD}=2\nabla_{[C}h_{D]B}
=
\omega_{CD}\xi_B +\tfrac{1}{2}\left( \nabla_C\phi_B\xi_D-\nabla_D\phi_B\xi_C\right),\]
and $\eta=\begin{bmatrix}
h_{BC}
\\
\psi_{BCD}
\end{bmatrix}$. Then $\psi_{[BCD]}=0$, and for the exterior covariant derivative of $\eta$ it is
\[
\D^{\wedge}_{A}
\begin{bmatrix}
h_{B{C}}
\\
\psi_{BCD}
\end{bmatrix}
=
\begin{bmatrix}
0
\\
\nabla_{[A}\psi_{B]CD}- R_{CD}{}^{E}{}_{[A} h_{B]E}
\end{bmatrix}.
\]
Since $\omega_{AB}$ is parallel, it is $\nabla_A\nabla_B\phi_C=\nabla_A\nabla_C\phi_B$, so that, together with $\nabla_A\xi_B=0$,
\[
\nabla_{[A}\psi_{B]CD}
=
\tfrac{1}{2}\left( \nabla_{[A}\nabla_{B]}\phi_C \xi_D
-
\nabla_{[A}\nabla_{B]}\phi_D \xi_C\right)
=
-\tfrac{1}{2}
R_{AB}{}^E{}_{[C} \xi_{D]} \phi_E.
\]
On the other hand,
\[
R_{CD}{}^{E}{}_{[A} h_{B]E}
=
\tfrac{1}{2}R_{CD}{}^{E}{}_{[A} \xi_{B]}\phi_{E},
\]
so that,  for $\mu_{AB}=\phi_{[A}\xi_{B]}$,
\[
\D^{\wedge}_{A}
\begin{bmatrix}
h_{B{C}}
\\
\psi_{BCD}
\end{bmatrix}
=
\begin{bmatrix}
0
\\
R_{AB}{}^{E}{}_{[C} \mu_{D]E}+R_{CD}{}^{E}{}_{[A} \mu_{B]E}
\end{bmatrix}
\]
is in the range of the curvature of the Killing connection.

However, if $\eta$
 was in the range of the Killing connection, then there would be a one-form $\sigma_C=\sigma_c+\sigma_{\bar c}$
such that $h_{BC}=\nabla_B\sigma_C-\mu_{BC}$, and this, by the definition of $h_{BC}$ and $\mu_{BC}$, implies that
$\nabla_{B}\sigma_{C}=\phi_B \xi_C=\phi_b\xi_{\bar c}$.
Hence,
\[0= \nabla_b\sigma_c =\nabla_{\bar b}\sigma_c=\nabla_{\bar b}\sigma_{\bar c}\quad\text{ and }\quad
\nabla_{ b}\sigma_{\bar c}= \phi_b\xi_{\bar c}.\]
Therefore, $\sigma_c\in \Gamma\left( \Wedge^{1,0}\right)$ is a lift of a parallel vector field on $M$, and, with $(M,g)$ being indecomposable and Rieman\-nian, must be zero. The last equation implies that
\[0= \nabla_{[a}\nabla_{b]}\sigma_{\bar c} =\omega_{ab}\xi_{\bar c},\]
which is a contradiction, as the K\"{a}hler form and the parallel vector field are both not zero.
		\end{example}

As a result that holds in any signature, we can now easily see the following.
\begin{corollary}\label{cscproducts}
Let $(M,g)$ be a semi-Rieman\-nian manifold that is a product of two spaces of constant sectional curvature. Then the Killing connection is exact.
\end{corollary}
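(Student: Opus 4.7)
The plan is to first show that any single space of constant sectional curvature carries an exact Killing connection, and then to invoke Proposition~\ref{prop:small_exactness} to pass to the product. Writing $(M, g) = (M_1 \times M_2,\, g_1 + g_2)$ with each $(M_i, g_i)$ of constant sectional curvature, I would split into cases according to whether any factor has non-zero curvature.

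For a single factor $(N, h)$ of constant sectional curvature $c$, if $c = 0$ then the Riemann tensor vanishes, so by formula~(\ref{Killing_curvature}) the Killing connection itself has zero curvature and is therefore flat and exact. If $c \neq 0$, the Riemann tensor has the form $R_{abcd} = c(g_{ac}g_{bd} - g_{ad}g_{bc})$, and a direct check using the action formula~(\ref{action}) shows $A \cdot R = 0$ for every $A \in \so(TN, h)$. Hence $\h_0 = \aut(R) = \so(TN, h)$ and $E_0 = TN \oplus \h_0 = E$. Combined with Proposition~\ref{Ekparallel}, which guarantees parallelism of the curvature filtration on any locally symmetric space, Proposition~\ref{EeqEk} delivers exactness.

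I would then pass to the product. If at least one factor, say $(M_1, g_1)$, has non-zero sectional curvature, it satisfies hypothesis~(1) of Proposition~\ref{prop:small_exactness}; since the other factor has an exact Killing connection by the preceding step, Proposition~\ref{prop:small_exactness} yields exactness of the Killing connection on $M_1 \times M_2$. Otherwise both factors are flat, so the product itself is flat, formula~(\ref{Killing_curvature}) vanishes identically, and the Killing connection of the product is flat and thus exact.

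I do not anticipate a serious obstacle. The only genuine computation is the identification $\h_0 = \so(TN, h)$ in the non-zero curvature case, which is immediate from formula~(\ref{action}) applied to the explicit expression for $R$; everything else is assembled from results already established in the paper.
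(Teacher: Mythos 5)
Your proposal is correct and follows essentially the same route as the paper: one observes that on any space of constant sectional curvature the curvature of the Killing connection vanishes (since $\nabla R=0$ and $A\cdot R=0$ for all $A\in\so(TN,h)$), so each factor has an exact Killing connection, and then one applies Proposition~\ref{prop:small_exactness}(1) when a factor has non-zero curvature, handling the doubly flat case by flatness. Your detour through Propositions~\ref{Ekparallel} and~\ref{EeqEk} is harmless but unnecessary, as $E=E_0$ already means the Killing connection is flat, hence exact.
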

\begin{proof}
The Killling connection of flat space is exact, so we can assume that at least one of the factors has non-vanishing sectional curvature.
For spaces of constant sectional curvature, the Killling  connection is flat, so that $E=E_0$. Hence, all assumptions of Proposition \ref{prop:small_exactness} are satisfied, and we obtain the desired result.
\end{proof}

\section{Exactness of the Killing connection for Lorentzian locally symmetric
spaces}
\label{lorsec}

In this section we will provide the 
 {\em proof of
Theorem~\ref{calabi-theo}} for Lorentzian
locally symmetric spaces, using the results of the previous sections. For this we first will show that the Killing
connection for indecomposable Lorentzian symmetric spaces and for products of
such with a flat factor is exact.  Indecomposable Lorentzian locally symmetric
spaces either have constant sectional curvature or are locally isometric to a
Cahen--Wallach space~\cite{Cahen-Wallach70}.
Corollary \ref{cscproducts} gives
the result in the constant sectional curvature case, so we now focus on
Cahen--Wallach spaces.

In~\cite{Cahen-Wallach70} it was shown that an indecomposable Lorentzian symmetric space either has constant sectional curvature or is covered by a Cahen--Wallach space. These  are  special cases of plane waves and pp-waves. 
The latter were introduced in~\cite{brinkmann25} in the context of conformal geometry and play an important role in general relativity. In order to define Cahen--Wallach spaces,
consider the  Lorentzian manifold $(\R^{n+2},g)$ with
\begin{equation}
\label{CWmetric}
g=2\d x^-\d x^+ +(x^iQ_{ij}x^j)(\d x^+)^2+ \delta_{ij}\d x^i \d x^j,\end{equation}
where $(x^-,x^1, \ldots, x^n,x^+)$ are co\"ordinates on $\R^{n+2}$ and $Q_{ij}$ is a symmetric $(n\times n)$ matrix (of constants).
The Lorentzian manifold $(\R^{n+2},g)$ has a parallel null vector field $\partial_-$, is symmetric, and its curvature is given by
\[R_{+ij+}=Q_{ij},\]
and all other terms not determined by the symmetries of $R$ being zero.
The isometry classes of metrics as in (\ref{CWmetric}) are determined by the eigenvalues and eigenspaces of the matrix $Q_{ij}$. If $Q_{ij}$ is non-degenerate, then the metric $g$ is indecomposable and $(\R^{n+2},g)$ is called a {\em Cahen--Wallach space}. If $Q_{ij}$ is degenerate, then $(\R^{n+2},g)$ is isometric to a product of a Cahen--Wallach space of dimension $\mathrm{rk}(Q_{ij})+2$ and Euclidean space.

In the following we do not assume that $Q_{ij}$ is non degenerate and we let $1\le q\le n$ be its rank.
We  fix a frame  $(e_-,e_1,\ldots, e_n, e_+)$, 			\[
				e_{-} = \partial_{-}, \quad e_{i} = \partial_{i}, \quad e_{+} = - \tfrac{1}{2}(x^iQ_{ij}x^j)  \partial_{-} + \partial_{+},
				\]
so that 
 $\ker(Q_{ij}) = \mathrm{span} \lbrace e_{q+1}, \dots , e_{n} \rbrace$,
			and write $\R^{q} = \mathrm{span} \lbrace e_{1}, \dots, e_{q} \rbrace$.
			In this frame, an endomorphism $A \in \lie{so}(TM, g) \simeq \lie{so}(1, n+1)$ takes the form
			\begin{equation}\label{eqn:so_matrix}
				A =
				\begin{pmatrix}
					a & u^{t} & 0 \\
					v & B & - u \\
					0 & - v^{t} & -a
				\end{pmatrix},
				\quad a \in \R, \; u, v \in \R^{n} \; \text{and} \; B \in \lie{so}(n).
			\end{equation}
Since $g$ is locally symmetric, the holonomy algebra is given by the curvature,
so that
			\begin{equation}\label{holonomy_matrix}
				\hol = \{ R(w,e_+)\mid w\in \R^n\}
				=
\left\{				\begin{pmatrix}
					0 & u^{t} & 0 \\
					0 & 0 & - u \\
					0 & 0 & 0
				\end{pmatrix}
\mid u \in \R^{q}\right\}.
			\end{equation}

			\begin{theorem}\label{theo:CW_exactness}
				Let $(M, g)$ be a Cahen--Wallach space or a product of Cahen--Wallach space with Euclidean space. Then the curvature filtration for the Killing connection on $M$ satisfies $E = E_{2}$. In particular, the Killing connection $\D$ of $(M, g)$ is exact.
			\end{theorem}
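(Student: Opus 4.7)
The plan is to show that $\h_2 = \so(TM, g)$ at an arbitrary point, so that $E = E_2$. Combined with Proposition~\ref{Ekparallel} (parallelism of the curvature filtration) and Proposition~\ref{EeqEk}, this will give that $\D$ is exact. I work in the frame $(e_-, e_1, \ldots, e_n, e_+)$ adapted to the Cahen--Wallach structure, with the explicit matrix forms \eqref{eqn:so_matrix} for $\so(TM, g)$ and \eqref{holonomy_matrix} for $\hol$. A first observation, obtained by direct matrix multiplication, is that $\hol$ is abelian: for $w, w' \in \R^q$ one has $H(w)H(w') = 0$, and hence $[H(w), H(w')] = 0$.

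The argument then consists of three short matrix computations. First, for any $A \in \so(TM, g)$ and $H(w) \in \hol$, I check that $[A, H(w)]$ lies in the parabolic subalgebra $\mathfrak{p} := \{A \in \so(TM, g) : v = 0\}$ stabilising the parallel null line $\R e_-$; this reduces to verifying that the $(2,1)$-block of $AH(w) - H(w)A$ vanishes. Second, for $X = (a, u, B, 0) \in \mathfrak{p}$ and $H(w') \in \hol$, a direct block multiplication shows that $[X, H(w')]$ has the same matrix shape as an element of $\hol$, with parameter $s = (a + B)w' \in \R^n$ (in general not in $\R^q$); call the resulting abelian subalgebra $\mathfrak{n}$. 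Third, I verify that $\mathfrak{n} \subseteq \aut(R) = \h_0$: any $X' \in \mathfrak{n}$ annihilates $e_-$, sends each $e_i$ into $\R e_-$, and sends $e_+$ into $\R^n$; since $R$ vanishes in any slot containing $e_-$, and the remaining substitution of $e_+$ by $-s$ only contributes terms of $R$ with a spatial-spatial pair, every component $(X' \cdot R)_{abcd}$ vanishes.

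Putting the three steps together, for any $A \in \so(TM, g)$ and any $H, H' \in \hol$ one has $[[A, H], H'] \in \h_0$, whence $[A, H] \in \h_1$ for every $H \in \hol$ and therefore $A \in \h_2$; thus $\h_2 = \so(TM, g)$ and $E_2 = E$. I expect the main subtlety to be the third step, the inclusion $\mathfrak{n} \subseteq \aut(R)$: this exploits the very special shape of the Cahen--Wallach curvature tensor, and in particular that the $u$-parameter in \eqref{eqn:so_matrix} is entirely unconstrained by $A \cdot R = 0$, a consequence of the parallel null vector $e_-$ lying in the kernel of $R$ in every slot. Once $E_2 = E$ is established, exactness of $\D$ follows formally from Propositions~\ref{Ekparallel} and~\ref{EeqEk}.
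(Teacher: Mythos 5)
Your proposal is correct, and it reaches the conclusion by a leaner route than the paper, although the skeleton is the same: work in the adapted frame with the matrix forms \eqref{eqn:so_matrix} and \eqref{holonomy_matrix}, compute brackets with $\hol$, conclude $\h_2=\so(TM,g)$ hence $E=E_2$, and close with Propositions~\ref{Ekparallel} and~\ref{EeqEk}. The difference is that the paper spends most of its proof determining $\h_0$ and $\h_1$ exactly -- showing $\h_0=Z_{\lie{so}(n)}(Q)\ltimes\R^n$ as in \eqref{eqn:h0CW} by analysing the constraints $A\cdot R=0$, and then reading off from the bracket formula \eqref{eqn:bracket_so_hol} that $\h_1=\lie{co}(n)\ltimes\R^n$ is the stabiliser of the null line and $\h_2=\so(1,n+1)$ -- whereas you only prove the three inclusions actually needed: $[\so(TM,g),\hol]\subseteq\mathfrak{p}$, $[\mathfrak{p},\hol]\subseteq\mathfrak{n}$, and $\mathfrak{n}\subseteq\aut(R)=\h_0$, so that $\mathrm{ad}(\hol)^2$ sends everything into $\h_0$ and $\h_2=\so(TM,g)$. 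Your third step is the only place where the curvature enters, and your substitution argument is sound: $X'\in\mathfrak{n}$ kills $e_-$, sends spatial vectors into $\R e_-$ (where $R$ vanishes in every slot), and sends $e_+$ to a spatial vector, which leaves the relevant skew pair without a $+$ entry, so all substituted components of $R$ vanish; this matches the fact, visible in \eqref{eqn:h0CW}, that the $u$-parameter is unconstrained in $\aut(R)$. What your shortcut gives up is the explicit identification of $\h_0$ and $\h_1$ (the latter as the stabiliser of $\R e_-$), which the paper records and which has some independent geometric interest; what it buys is a shorter proof that does not need the case analysis eliminating $v$, $a$ and constraining $B$.

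One small correction to your opening remark: $H(w)H(w')$ is not zero in general -- its $(1,n{+}2)$ entry is $-w^{t}w'$ -- although the commutator $[H(w),H(w')]$ does vanish because $w^{t}w'=w'^{t}w$, so $\hol$ (and likewise $\mathfrak{n}$) is indeed abelian. This does not affect your argument: the tensor-substitution form of step three never uses the product, and the commutator statement, which is all you would need in the endomorphism formulation, is true.
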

			\begin{proof} In order to show that $E = E_{2}$, we  compute explicitly the Lie algebras $\lie{h}_{k}$, $k = 0,1,2$, defined in (\ref{hkdef}).

			The first case to consider is $\lie{h}_{0}$. The Lie algebra $\lie{h}_{0}$ is isomorphic to the algebra of automorphisms of the curvature $\aut(R)$, which, as it a was shown in 			  \cite[Section 4.3.4]{GlobkeLeistner16}), is isomorphic to
\begin{equation}\label{eqn:h0CW}
				\h_0=Z_{\lie{so}(n)}(Q) \ltimes \R^{n}
				=
				\left\{
							A =
				\begin{pmatrix}
				0 & z^{t} & 0 \\
				0 & B & -z \\
				0 & 0 & 0 \\
				\end{pmatrix}\mid
 z \in \R^{n},\, B \in Z_{\lie{so}(n)}(Q) \right\}.
			\end{equation}			 Here $Z_{\lie{so}(n)}(Q)$ is the centraliser in $\so(n)$ of the matrix $Q$.
In fact, $Z_{\lie{so}(n)}(Q) = Z_{\lie{so}(q)}(Q) \oplus \lie{so}(n-q)	$.
						For the sake of completeness, we will show this by analysing the constraints that an automorphism of the curvature tensor must satisfy. Pick  an element $A$ of $\so(TM,g)$ as in (\ref{eqn:so_matrix}) and let $A$ act on $R$. We show that if $A$ as in (\ref{eqn:so_matrix}) is an element in $\h_0$, then  $v = 0$, $a=0$ and $B\in Z_{\lie{so}(n)}(Q) $, where we use that $e_-\hook R=e_i\hook R=0$,  for $e_i\in \ker (Q)$, and $R(e_i,e_j)=0$, for all $1\le i,j\le n$. Recalling (\ref{action}), the $e_{-} \wedge e_{+}$ component of $A \cdot R$ yields
			\[
				0=(A \cdot R)(e_{-}, e_{+})  = - R(v, e_{+}) = {\begin{pmatrix}
					0 & v^{t}Q & 0 \\
					0 & 0 & - Qv \\
					0 & 0 & 0
				\end{pmatrix}},
				\]
so that $v \in \ker(Q)$.  Now for every  $y \in \R^{q}$,
			\[
				0 = (A \cdot R)(v, y) = g(v, v) R(e_{+}, y),
				\]
so that we   conclude that $v = 0$.
Next, for $x \in \ker(Q)$, it follows that
			\[
				0 = (A \cdot R)(x, e_{+}) =- R(Bx, e_{+}),
				\]
				which shows that $B$ leaves the kernel of $Q$ invariant and, since $B\in \so(n)$, also its complement $\R^q$.

For $y \in \R^{q}$, it follows that
			\begin{eqnarray*}
				0 = (A \cdot R)(y, e_{+}) &=& [A, R(y, e_{+})]- R(Ay, e_{+}) - R(y, Ae_{+}) \\
				&=&
	[A, R(y, e_{+}) ]	- R(By, e_{+}) +a R(y, e_{+}).
				\end{eqnarray*}
The equation that results from computing the commutator yields
\[ [Q,B]y-2a Qy=0,\]
for all $y\in \R^q$, which implies the matrix equation $[Q,B]=2a Q$. Using 		the trace-form to pair this equation with $Q$, and given that $Q$ is symmetric, this yields $a=0$ and therefore $[Q,B]=0$. This proves statement (\ref{eqn:h0CW}).

			Now, we will show that $\lie{h}_{1} $ is equal to the stabiliser  in $\so(1,n+1)$ of the null line $\R\cdot e_-$, i.e.~that
\begin{equation}
			\label{h1}
			\lie{h}_{1} = \lie{co}(n) \ltimes \R^{n}=(\R\oplus \lie{so}(n)) \ltimes \R^{n}.
			\end{equation}
Recall that $			\lie{h}_{1} = \lbrace A \in \lie{so}(1,n+1) \mid [A, H] \in \lie{h}_{0}, \forall H \in \hol \rbrace$.  Taking $A \in \lie{so}(1,n+1)$ and $H \in \hol$ as in (\ref{eqn:so_matrix}) and (\ref{holonomy_matrix}) respectively, it follows that their Lie bracket is
			\begin{equation}\label{eqn:bracket_so_hol}
				[A, H] =
				\begin{pmatrix}
				-v^{t}w & (aw + Bw)^{t} & 0 \\
				0 & vw^{t} - wv^{t} & -(aw + Bw) \\
				0 & 0 & v^{t}w
				\end{pmatrix}.
			\end{equation}
			The first thing to notice is that, for $[A, H]$ to be in $\h_0$, ie.~as in (\ref{eqn:h0CW}), there are no constraints on $a$, $u$ and $B$, whereas for $v$ the equations
			\[
				v^{t}w = 0 \quad \text{and} \quad vw^{t} - wv^{t} \in Z_{\lie{so}(n)}(Q)
				\]
			must be satisfied for all $w \in \R^{q}$. The first equation implies that $v \in \ker(Q)$, since $w \in \R^{q}$, and therefore $vw^{t} - wv^{t}$ takes an off diagonal form. The centraliser of $Q$ in $\lie{so}(n)$ is block diagonal, hence $v$ is in fact equal to $0$. Therefore $\lie{h}_{1}$ is given by
			\[
				\lie{h}_{1} =
				\left\lbrace
					\begin{pmatrix}
					a & u^{t} & 0 \\
					0 & B & - u \\
					0 & 0 & -a
					\end{pmatrix}
					\; : \;
					a \in \R, \; u \in \R^{n}, \; B \in \lie{so}(n)
				\right\rbrace,
				\]
			that is, $\lie{h}_{1}$ is isomorphic to the stabiliser $\lie{co}(n) \ltimes \R^{n}$ of the null line $\R e_{-}$, as claimed.

			Lastly, from equation (\ref{eqn:bracket_so_hol}), follows that \[\lie{h}_{2} = \lbrace A \in \lie{so}(1,n+1) \mid [A, H] \in \lie{h}_{1}, \forall H \in \hol\rbrace = \lie{so}(1,n+1),\] and thus $E = E_{2}$.

			Finally, from Proposition \ref{Ekparallel} we know that $E_0$ and $E_1$  are parallel. Therefore, from Proposition \ref{EeqEk} we obtain that the Killing connection of $(M,g)$ is exact.
			\end{proof}

Now we are ready to prove the main result of this section, which, by  Corollary~\ref{2exact}, will imply Theorem~\ref{calabi-theo}.
\begin{theorem}\label{killcon-exact}
Let $(M,g)$ be a Lorentzian locally symmetric space.
Then the Killing connection is  exact unless the de~Rham decomposition of $(M,g)$ contains a Hermitian factor and a factor that is flat or  a Cahen--Wallach space, in which case the Killing connection is not exact.
\end{theorem}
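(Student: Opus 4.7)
The plan is to handle the failure and the exactness directions separately. Write the local de~Rham-Wu decomposition $M = M_0 \times M_1 \times \cdots \times M_k$, where $M_0$ is the indecomposable Lorentzian factor (flat Minkowski, non-zero constant sectional curvature, or Cahen--Wallach) and $M_1, \ldots, M_k$ are indecomposable Riemannian factors.

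For the failure direction, suppose $M$ contains a Hermitian factor $M_i$ and a factor $M_j$ which is flat (in either signature) or Cahen--Wallach. Each such $M_j$ carries a parallel one-form: trivially in the flat case, and the metric dual of the parallel null vector field $\partial_-$ in the Cahen--Wallach case. Example~\ref{coro:non_exactness} then produces an explicit section $\eta$ on $M_i \times M_j$ in $\ker D^{\wedge}$ but not in $\mathrm{im}\, D$; pulling $\eta$ back along the natural projection $\pi \colon M \to M_i \times M_j$ gives a $D^{\wedge}$-closed section on $M$. Were it of the form $D\phi$ for some $\phi \in \Gamma(E)$, restricting $\phi$ to a product slice of the form $\{p_\ell\}_{\ell \neq i,j} \times M_i \times M_j$ would give a preimage of $\eta$ on $M_i \times M_j$, using that the Levi-Civita connection on a semi-Riemannian product is block-diagonal and hence so is the Killing connection restricted to a product submanifold. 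This contradicts Example~\ref{coro:non_exactness}.

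For the exactness direction I would argue two sub-cases. If $M$ contains no flat or Cahen--Wallach factor, then $M_0$ has non-zero constant sectional curvature and the Riemannian part $M_1 \times \cdots \times M_k$ has no flat factor, so by the Riemannian result of~\cite{CostanzaEastwoodLeistner21} (recalled in Remark~\ref{riemremark}) its Killing connection is exact; Proposition~\ref{prop:small_exactness} with $M = M_0$ under hypothesis~(1) and $\bar M = M_1 \times \cdots \times M_k$ then yields exactness on $M$. If instead $M$ contains no Hermitian factor, group $M_0$ together with all flat Riemannian factors into a subproduct $N$. Exactness of the Killing connection on $N$ follows from Theorem~\ref{theo:CW_exactness} when $M_0$ is Cahen--Wallach, from Corollary~\ref{cscproducts} when $M_0$ has non-zero constant sectional curvature, or directly from flatness when $M_0$ is Minkowski. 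The remaining factors of $M$ are non-flat, non-Hermitian, indecomposable Riemannian factors, each falling under hypothesis~(2) of Proposition~\ref{prop:small_exactness}; an induction on the number of such factors iteratively peels them off from $M$ down to $N$, preserving exactness at each step.

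The main obstacle, I expect, will be the failure direction: one must verify that the local obstruction of Example~\ref{coro:non_exactness} continues to obstruct exactness after pullback along $\pi$ to the full product $M$, which reduces to a careful use of the product structure of the Levi-Civita connection so that restriction to a product slice intertwines the Killing connection on $M$ with the Killing connection on $M_i \times M_j$. A secondary organisational subtlety in the exactness direction is that flat Riemannian factors cannot be handled individually by Proposition~\ref{prop:small_exactness}, which excludes flat factors as the peeled factor; they must be absorbed into $N$ together with $M_0$ before the iteration begins.
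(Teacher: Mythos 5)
Your exactness direction is, in substance, the paper's own proof: group the indecomposable Lorentzian factor together with all flat Riemannian factors (your $N$, the paper's $L$), obtain exactness of that block from Corollary~\ref{cscproducts} and Theorem~\ref{theo:CW_exactness}, and then peel off the remaining irreducible Riemannian factors with Proposition~\ref{prop:small_exactness} --- hypothesis~(1) with the constant-curvature Lorentzian factor as the distinguished factor when there is no flat or Cahen--Wallach factor (the Riemannian part, having no flat factor, has exact Killing connection by~\cite{CostanzaEastwoodLeistner21} together with Proposition~\ref{complement-prop} or Corollary~\ref{2exact}, Hermitian factors notwithstanding), and hypothesis~(2) factor-by-factor when there is no Hermitian factor. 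That half is correct and matches the paper.

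The failure direction, as written, contains a genuine error and is also more roundabout than necessary. The section $\eta$ of Example~\ref{coro:non_exactness} is \emph{not} in $\ker D^{\wedge}$: the example computes $D^{\wedge}\eta$ to be a generally nonzero element of the range of the curvature of the Killing connection, which is precisely what non-exactness in the sense of Definition~\ref{exact_connection} calls for. Hence your claim that the pullback of $\eta$ to $M$ is $D^{\wedge}$-closed fails as stated; what must be verified instead is that $D^{\wedge}$ of the pulled-back section lies in the range of the curvature of the Killing connection of the full product. This is true --- since the curvature tensor of the product has no mixed components, the curvature of the full Killing connection applied to the (included) section $\bigl[\,0,\ \mu_{AB}\,\bigr]$ reproduces the pullback of $D^{\wedge}\eta$ --- and your slice-restriction argument for ``not in the range of $D$'' is then sound, because the slices are totally geodesic and the Christoffel symbols and curvature are block-diagonal. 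But none of this pullback-and-slice machinery is needed: Example~\ref{coro:non_exactness} is stated for an arbitrary symmetric space $\bar M$ admitting a parallel one-form, so one simply takes the Hermitian factor as the first factor and $\bar M$ to be the product of \emph{all} remaining factors, on which the parallel one-form of the flat or Cahen--Wallach factor remains parallel; the example then yields non-exactness of the Killing connection of the whole space directly, which is how the paper concludes.
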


\begin{proof}
		Let $M^{1} \times \dots \times M^{k} \times L$ be the local de~Rham decompositon of $M$ into irreducible Rieman\-nian factors $M^i$ and a Lorentzian factor $L$, such that $L$ does not contain a non-flat Rieman\-nian factor, that is, $L$ is either
		\begin{enumerate}
		\item indecomposable Lorentzian, i.e.~with non-zero constant sectional curvature or
a Cahen--Wallach space, or
		\item  a product of an indecomposable Lorentzian symmetric space with a Euclidean factor, or
		\item   Minkowski space.
		\end{enumerate} In all three cases, Corollary \ref{cscproducts} and Theorem \ref{theo:CW_exactness} imply that the Killing connection of $L$ is exact.
Moreover, by  \cite{CostanzaEastwoodLeistner21}
the same holds for $M^{k}$ (see also Remark \ref{riemremark}), so that we can apply Proposition~\ref{prop:small_exactness} to obtain that the Killing connection is exact for $M^{k} \times L $, provided that $M^{k}$ is an irreducible Rieman\-nian symmetric space that is non-Hermitian if $L$ admits a parallel vector field. Inductively, it follows that the Killing connection on $M$ is exact unless it contains a Hermitian factor in its local de~Rham decomposition and $L$ admits a parallel vector field.
\end{proof}

\bibliographystyle{abbrv}
\bibliography{GEOBIB}

\providecommand{\MR}[1]{}\def\cprime{$'$} \def\cprime{$'$} \def\cprime{$'$}
\begin{thebibliography}{10}

\bibitem{AksteinerAnderssonBackdahlKhavkineWhiting21}
S.~Aksteiner, L.~Andersson, T.~B\"{a}ckdahl, I.~Khavkine, and B.~Whiting.
\newblock Compatibility complex for black hole spacetimes.
\newblock {\em Comm. Math. Phys.}, 384(3):1585--1614, 2021.

\bibitem{BransonCapEastwoodGover06}
T.~Branson, A.~{\v{C}}ap, M.~Eastwood, and A.~R. Gover.
\newblock Prolongations of geometric overdetermined systems.
\newblock {\em Internat. J. Math.}, 17(6):641--664, 2006.

\bibitem{brinkmann25}
H.~W. Brinkmann.
\newblock Einstein spaces which are mapped conformally on each other.
\newblock {\em Math. Ann.}, 94(1):119--145, 1925.

\bibitem{Cahen-Wallach70}
M.~Cahen and N.~Wallach.
\newblock {L}orentzian symmetric spaces.
\newblock {\em Bull. Amer. Math. Soc.}, 79:585--591, 1970.

\bibitem{Calabi61}
E.~Calabi.
\newblock On compact, {R}iemannian manifolds with constant curvature. {I}.
\newblock In {\em Proc. {S}ympos. {P}ure {M}ath., {V}ol. {III}}, pages
  155--180. American Mathematical Society, Providence, R.I., 1961.

\bibitem{cap-slovak-book1}
A.~{\v{C}}ap and J.~Slov{\'a}k.
\newblock {\em Parabolic geometries. {I}}, volume 154 of {\em Mathematical
  Surveys and Monographs}.
\newblock American Mathematical Society, Providence, RI, 2009.
\newblock Background and general theory.

\bibitem{CostanzaEastwoodLeistner21}
F.~Costanza, M.~Eastwood, T.~Leistner, and B.~McMillan.
\newblock A {C}alabi operator for {R}iemannian locally symmetric spaces.
\newblock {\em Amer. J. Math.}, in press.
\newblock {P}reprint at 10.48550/ARXIV.2112.00841.

\bibitem{derham52}
G.~de~Rham.
\newblock Sur la reductibilit\'e d'un espace de {R}iemann.
\newblock {\em Comment. Math. Helv.}, 26:328--344, 1952.

\bibitem{E}
M.~G. Eastwood.
\newblock Notes on projective differential geometry.
\newblock In {\em Symmetries and overdetermined systems of partial differential
  equations}, volume 144 of {\em IMA Vol. Math. Appl.}, pages 41--60. Springer,
  New York, 2008.

\bibitem{fulton-harris}
W.~Fulton and J.~Harris.
\newblock {\em Representation theory}, volume 129 of {\em Graduate Texts in
  Mathematics}.
\newblock Springer-Verlag, New York, 1991.
\newblock A first course, Readings in Mathematics.

\bibitem{GasquiGoldschmidt83}
J.~Gasqui and H.~Goldschmidt.
\newblock D\'{e}formations infinit\'{e}simales des espaces riemanniens
  localement sym\'{e}triques. {I}.
\newblock {\em Adv. in Math.}, 48(3):205--285, 1983.

\bibitem{GlobkeLeistner16}
W.~Globke and T.~Leistner.
\newblock Locally homogeneous pp-waves.
\newblock {\em J. Geom. Phys.}, 108:83--101, 2016.

\bibitem{HammerlSombergSoucekSilhan12inv}
M.~Hammerl, P.~Somberg, V.~Sou{\v{c}}ek, and J.~{\v{S}}ilhan.
\newblock Invariant prolongation of overdetermined {PDE}s in projective,
  conformal, and {G}rassmannian geometry.
\newblock {\em Ann. Global Anal. Geom.}, 42(1):121--145, 2012.

\bibitem{HammerlSombergSoucekSilhan12}
M.~Hammerl, P.~Somberg, V.~Sou{\v{c}}ek, and J.~{\v{S}}ilhan.
\newblock On a new normalization for tractor covariant derivatives.
\newblock {\em J. Eur. Math. Soc. (JEMS)}, 14(6):1859--1883, 2012.

\bibitem{Khavkine19}
I.~Khavkine.
\newblock Compatibility complexes of overdetermined {PDE}s of finite type, with
  applications to the {K}illing equation.
\newblock {\em Classical Quantum Gravity}, 36(18):185012, 46, 2019.

\bibitem{Kostant55}
B.~Kostant.
\newblock Holonomy and the {L}ie algebra of infinitesimal motions of a
  {R}iemannian manifold.
\newblock {\em Trans. Amer. Math. Soc.}, 80:528--542, 1955.

\bibitem{MilnorStasheff74}
J.~W. Milnor and J.~D. Stasheff.
\newblock {\em Characteristic classes}.
\newblock Annals of Mathematics Studies, No. 76. Princeton University Press,
  Princeton, N. J.; University of Tokyo Press, Tokyo, 1974.

\bibitem{penroserindler1}
R.~Penrose and W.~Rindler.
\newblock {\em Spinors and Spacetime Vol. I: Two-Spinor Calculus and
  Relativistic Fields}.
\newblock Cambridge Monographs on Mathematical Physics. Cambridge University
  Press, Cambridge, 1984.

\bibitem{wald84}
R.~M. Wald.
\newblock {\em General Relativity}.
\newblock University of Chicago Press, Chicago, IL, 1984.

\bibitem{wu64}
H.~Wu.
\newblock On the de {R}ham decomposition theorem.
\newblock {\em Illinois J. Math.}, 8:291--311, 1964.

\end{thebibliography}
\end{document}